\let\pa\partial
\let\na\nabla
\let\eps\varepsilon
\newcommand{\N}{{\mathbb N}}
\newcommand{\R}{{\mathbb R}}
\newcommand{\diver}{\operatorname{div}}
\newcommand{\dd}{\mathrm{d}}
\newcommand{\dx}{\mathrm{d}x}
\newcommand{\dt}{\mathrm{d}t}
\newcommand{\nD}{n_{\mathrm{Dir}}}
\newcommand{\pD}{p_{\mathrm{Dir}}}
\newcommand{\VD}{V_{\mathrm{Dir}}}
\newcommand{\WD}{W_{\mathrm{Dir}}}
\newcommand{\GD}{\Gamma_{\mathrm{Dir}}}
\newcommand{\GN}{\Gamma_{\mathrm{Neu}}}
\newtheorem{theorem}{Theorem}
\newtheorem{lemma}[theorem]{Lemma}
\newtheorem{remark}[theorem]{Remark}
\begin{document}

\title[Degenerate drift--diffusion systems for memristors]{Degenerate drift--diffusion systems for memristors}

\author[A. J\"ungel]{Ansgar J\"ungel}
\address{Institute of Analysis and Scientific Computing, Technische Universit\"at Wien,
Wiedner Hauptstra\ss e 8--10, 1040 Wien, Austria}
\email{juengel@tuwien.ac.at}

\author[M. Vetter]{Martin Vetter}
\address{Institute of Analysis and Scientific Computing, Technische Universit\"at Wien,
Wiedner Hauptstra\ss e 8--10, 1040 Wien, Austria}
\email{martin.vetter@tuwien.ac.at}

\date{\today}

\thanks{The authors acknowledge partial support from
the Austrian Science Fund (FWF), grants P33010 and F65.
This work has received funding from the European
Research Council (ERC) under the European Union's Horizon 2020 research and innovation programme, ERC Advanced Grant no.~101018153.}

\begin{abstract}
A system of degenerate drift--diffusion equations for the electron, hole, and oxygen vacancy densities, coupled to the Poisson equation for the electric potential, is analyzed in a three-dimensional bounded domain with mixed Dirichlet--Neumann boundary conditions. The equations model the dynamics of the charge carriers in a memristor device in the high-density regime. Memristors can be seen as nonlinear resistors with memory, mimicking the conductance response of biological synapses. The global existence of weak solutions and the weak--strong uniqueness property is proved. Thanks to the degenerate diffusion, better regularity results compared to linear diffusion can be shown, in particular the boundedness of the solutions.
\end{abstract}

\keywords{Drift--diffusion equations, degenerate diffusion, global existence analysis, weak--strong uniqueness, memristors, semiconductors, neuromorphic computing.}

\subjclass[2000]{35B45, 35B65, 35K51, 35K65, 35Q81.}

\maketitle


\section{Introduction}

A memristor is a nonlinear resistor with memory, which may be utilized as an artificial neuron in neuromorphic computing. Neuromorphic computing aims to create computers that behave like parts of the human brain \cite{IeAm20}. Here, we consider oxide-based memristors consisting of a thin titanium dioxide layer between two metal electrodes \cite{Mla19}. Besides the electrons and holes (defect electrons), also the oxygen vacancies act as charge carriers. When an electric field is applied, the oxygen vacancies drift and change the boundary between the low- and high-resistance layers. In this way, memristors are able to mimic the conductance response of synapses. Advantages of these devices are the low power consumption, short switching time, and its nanosize.

Memristor devices can be modeled by drift--diffusion equations for the densities of electrons $n(x,t)$, holes $p(x,t)$, and oxygen vacancies $D(x,t)$, coupled selfconsistently to the Poisson equation for the electric potential $V(x,t)$, where $x\in\R^3$ is the spatial variable and $t\ge 0$ is the time \cite{GSTD13,SBW09}. In low-density regimes, the (scaled) diffusion fluxes are given by $\na n$, $\na p$, and $\na D$, respectively. However, in the case of high densities, the nonlinear relation $\na n^{\alpha_n}$ with $\alpha_n=5/3$ has to be used for the diffusion flux (and similarly for holes and oxygen densities) \cite[Chap.~5]{Jue09}.

The existence analysis of the low-density three-species memristor drift--diffusion system was investigated in \cite{JJZ23}, and the two-species drift--diffusion equations in the high-density regime was studied in \cite{Jue94,Jue95}. Nonlinear diffusion fluxes were assumed in \cite{GaSk05}, but the assumptions do not fit into our framework. Up to our knowledge, the analysis of degenerate drift--diffusion equations for more than two species is missing in the literature. The work \cite{JJZ23} has proved the global existence of solutions to the low-density memristor drift--diffusion system with low regularity only, namely $\sqrt{n}$, $\sqrt{p}$, $\sqrt{D}\in W^{1,1}(\Omega)$. In this paper, we explore to what extent the degenerate diffusion allows us to improve the regularity of the solutions.

\subsection{Model equations}

The dynamics of the densities and electric potential is assumed to be given by the equations
\begin{align}
  \pa_t n &= \diver J_n, \quad J_n = \na n^{\alpha_n}-n\na V, \label{1.n} \\
  \pa_t p &= -\diver J_p, \quad J_p = -(\na p^{\alpha_p}+p\na V), \label{1.p} \\
  \pa_t D &= -\diver J_D, \quad J_D = -(\na D^{\alpha_D}+D\na V), \label{1.D} \\
  \lambda^2\Delta V &= n-p-D+A(x)\quad\mbox{in }\Omega,\ t>0, \label{1.V}
\end{align}
where $J_n$, $J_p$, and $J_D$ are the current densities of the electrons, holes, and oxygen densities, respectively, $\lambda>0$ is the (scaled) Debye length, and $A(x)$ is the given immobile acceptor doping density. Following \cite{SBW09}, we neglect recombination--generation terms. 

When the effective density of states in the conduction band is much larger than the doping concentration (high-density regime), the drift--diffusion model with Fermi--Dirac statistics can be approximated by equations \eqref{1.n}--\eqref{1.p} with $\alpha_n=\alpha_p=5/3$ \cite{Jue96}. Since we want to understand mathematically the gain of regularity, we allow for general exponents $\alpha_n,\alpha_p>1$. One may argue that the oxygen vacancies evolve not necessarily in a high-density regime. However, we cannot expect any gain of regularity if $\alpha_D=1$ (see \cite{JJZ23}). For this reason, we also choose $\alpha_D>1$. We discuss the case $\alpha_D=1$ in Remark \ref{rem.D}. Fermi--Dirac statistics need to be used also for the charge transport through ion channels when the number of states in the channel is of the same order as the particle numbers \cite{Lat09}. Thus, our results can also be applied to the charged particle transport in confined ion channels.

We impose physically motivated mixed Dirichlet--Neumann boundary conditions,
\begin{align}
  n = \nD, \quad p = \pD, \quad V = \VD &\quad\mbox{on }\GD,\ t>0, \label{1.Dbc} \\
  J_n\cdot\nu = J_p\cdot\nu = \na V\cdot\nu = 0 &\quad\mbox{on }\GN,\ t>0, \label{1.Nbc} \\
  J_D\cdot\nu = 0 &\quad\mbox{on }\pa\Omega,\ t>0, \label{1.JD}
\end{align}
and the initial conditions
\begin{equation}
  n(\cdot,0) = n_I, \quad p(\cdot,0) = p_I, \quad D(\cdot,0) = D_I\quad\mbox{in }\Omega. \label{1.ic}
\end{equation}
The boundary part $\GN$ models insulating boundary segments, while $\GD$ is the union of Ohmic contacts for the electron and hole densities and the applied voltage. These boundary conditions are typically used in the memristor literature \cite{GSTD13,SBW09}. They can be considered as first-order approximations from the semiconductor Boltzmann equation \cite{Pou91}. According to \cite{Yam95}, a second-order approximation leads to Robin-type conditions. The oxygen vacancies are supposed not to leave the semiconductor domain, which leads to Neumann conditions. 

\subsection{Mathematical difficulty}

The misfit of boundary conditions (mixed for the electron and hole densities and Neumann for the oxygen vacancy density) provides the main mathematical difficulty. To illustrate the problem, let the hole density be fixed and set $\nD=0$. Then, using $\log n$ and $\log D$ as test functions in the weak formulations of \eqref{1.n} and \eqref{1.D}, respectively, adding both equations, integrating by parts, and using the Poisson equation \eqref{1.V}, we find that
\begin{align}\label{1.aux}
  \frac{\dd}{\dt}&\int_\Omega\big(n(\log n-1)+D(\log D-1)\big)\dx
  + 4\int_\Omega(|\na\sqrt{n}|^2+|\na\sqrt{D}|^2)\dx \\
  &= \int_\Omega\na(n-D)\cdot\na V\dx \nonumber \\
  &= -\frac{1}{\lambda^2}\int_\Omega(n-D)(n-p-D+A)\dx
  + \int_{\GD}(n-D)\na V\cdot\nu \dx. \nonumber
\end{align}
The first term on the right-hand side can be bounded by $C\int_\Omega(n+D)\dx$, since $(n-D)^2\ge 0$ removes the quadratic terms, but the second term involves $\na V\cdot\nu$ on $\GD$, which cannot be easily bounded. Moreover, the monotonicity trick $(n-D)^2\ge 0$ cannot be applied for more than two species. 

This issue can be overcome by deriving first some estimates from the free energy (see below) and then to apply the Gagliardo--Nirenberg inequality; see \cite{BFS14,ChLu95,GlHu97,GlHu05}. However, this idea only works in two space dimensions. For the three-dimensional situation, the authors of \cite{ChLu95} assumed full elliptic regularity for the Poisson equation to achieve uniform $W^{1,\infty}(\Omega)$ estimates for the potential. This is only possible if the Dirichlet and Neumann boundary parts do not meet. In \cite{BFPR14}, no-flux boundary conditions are assumed for the densities and the Robin condition $\na V\cdot\nu + cV = \xi$ on $\pa\Omega$. Then the boundary term in \eqref{1.aux} can be handled and global existence in three space dimensions could be concluded. Finally, a combination of local $W^{1,q}(\Omega)$ regularity with $q>1$ and the $L^1\log L^1$ bound from \eqref{1.aux} has led to a global existence result \cite{JJZ23}, but with rather low regularity. To deal with the three-dimensional case and the degeneracy, we assume that there exists $r\ge 3$ such that
\begin{equation}\label{1.W13}
  \|\na V\|_{L^r(\Omega)}\le C\|n-p-D+A\|_{L^{3r/(3+r)}(\Omega)} + C
\end{equation}%
for some constant $C>0$ depending on the boundary data. This assumption is satisfied if the intersection of the Dirichlet and Neumann boundary behaves not ``too wildly''; see the discussion in Section \ref{sec.main}. Our global existence result holds for $r=3$, while we can prove the boundedness of solutions if $r>3$.

\subsection{Key ideas}

A priori estimates are derived from the free energy. Introduce the internal energies
\begin{align}\label{1.internal}
  h_n(n) = \frac{n(n^{\alpha_n-1}-\nD^{\alpha_n-1})}{\alpha_n-1},\quad
  h_v(v) = \frac{p(p^{\alpha_p-1}-\pD^{\alpha_p-1})}{\alpha_p-1}, \quad 
  h_D(D) = \frac{D^{\alpha_D}}{\alpha_D-1},
\end{align}
and the free energy as the sum of the internal energies and the electric energy,
\begin{align*}
  H[n,p,D] = \int_\Omega\bigg(h_n(n) + h_p(p) + h_D(D) + D\VD
  + \frac{\lambda^2}{2}|\na(V-\VD)|^2\bigg)\dx,
\end{align*}
where $V$ solves \eqref{1.V} with the boundary conditions in \eqref{1.Dbc}--\eqref{1.Nbc}. The additional term $D\VD$ compensates a contribution coming from the electric energy when computing the energy dissipation. A formal computation, made rigorous in Section \ref{sec.ex} on the level of approximate solutions, shows the free energy inequality
\begin{align}\label{1.ei}
  \frac{\dd}{\dt}H[n,p,D] + \int_\Omega\bigg(
  \frac{|J_n|^2}{n} + \frac{|J_p|^2}{p} + \frac{|J_D|^2}{D}\bigg)\dx
  \le C(\nD,\pD,\VD,T), \quad t\in(0,T),
\end{align}
which provides a priori estimates for $n^{\alpha_n}$, $p^{\alpha_p}$, and $D^{\alpha_D}$ in $L^\infty(0,T;L^1(\Omega))$ as well as for $J_n/\sqrt{n}$, $J_p/\sqrt{p}$, and $J_D/\sqrt{D}$ in $L^2(0,T;L^2(\Omega))$. Gradient bounds are derived from the Gagliardo--Nirenberg inequality and elliptic regularity \eqref{1.W13}. To highlight here the idea, we consider the equation for the electron density only, fixing $p$ and $D$:
\begin{align}
  \|\na n^{\alpha_n-1/2}\|_{L^2(\Omega)}
  &= \frac{\alpha_n}{\alpha_n-1/2}\bigg\|\frac{J_n}{\sqrt{n}}
  + \sqrt{n}\na V\bigg\|_{L^2(\Omega)} \label{1.nan} \\
  &\le C\bigg\|\frac{J_n}{\sqrt{n}}\bigg\|_{L^2(\Omega)}
  + C\|\sqrt{n}\|_{L^6(\Omega)}\|\na V\|_{L^3(\Omega)}. \nonumber
\end{align}
As the first term on the right-hand side is bounded (thanks to \eqref{1.ei}), we only need to estimate the second term. This is done by applying the Gagliardo--Nirenberg inequality for some $\theta\in[0,1]$ and using the bound for $n$ in $L^{\alpha_n}(\Omega)$ from \eqref{1.ei}:
\begin{align*}
  \|\sqrt{n}\|_{L^6(\Omega)} 
  &= \|n^{\alpha_n-1/2}\|_{L^{3/(\alpha_n-1/2)}(\Omega)}^{
  1/(2\alpha_n-1)}
  \le C\|\na n^{\alpha_n-1/2}\|_{L^2(\Omega)}^{\theta/(2\alpha_n-1)}
  \|n\|_{L^{\alpha_n}(\Omega)}^{(1-\theta)/2} + C \\
  &\le C\|\na n^{\alpha_n-1/2}\|_{L^2(\Omega)}^{\theta/(2\alpha_n-1)} 
  + C.
\end{align*}
In a similar way, exploiting elliptic regularity and applying the Gagliardo--Nirenberg inequality for some $\widetilde\theta\in[0,1]$ again,
\begin{align*}
  \|\na V\|_{L^3(\Omega)} &\le C\|n\|_{L^{3/2}(\Omega)} + C
  = \|n^{\alpha_n-1/2}\|_{L^{3/(2\alpha_n-1)}(\Omega)}^{
  1/(\alpha_n-1/2)} + C \\
  &\le C\|\na n^{\alpha_n-1/2}\|_{L^2(\Omega)}^{
  \widetilde\theta/(\alpha_n-1/2)} + C.
\end{align*}
Inserting both estimates into \eqref{1.nan} yields
$$
  \|\na n^{\alpha_n-1/2}\|_{L^2(\Omega)}
  \le C\|\na n^{\alpha_n-1/2}\|_{L^2(\Omega)}^{(\theta+2\widetilde\theta)
  /(2\alpha_n-1)} + C,
$$
which provides a gradient bound for $n^{\alpha_n-1/2}$ if the exponent on the right-hand side is smaller than one, which holds if and only if $\alpha_n>6/5$. Observe that this includes the physical value $\alpha_n=5/3$.

We obtain from the gradient bound an a priori estimate for $\na n^{\alpha_n}$ in $L^{1}(\Omega)$, from which we infer a bound for $\pa_t n$ in some Sobolev space. This allows us to apply the Aubin--Lions lemma to conclude the compactness of the sequence of approximate solutions whose limit is a solution to the original problem \eqref{1.n}--\eqref{1.ic}. 

\subsection{Main results}\label{sec.main}

We impose the following hypotheses:

\begin{itemize}
\item[(H1)] Domain: $T>0$, $\Omega\subset\R^3$ is a bounded domain with Lipschitz boundary and $\pa\Omega=\GD\cup\GN$ satisfies
$\GD\cap\GN=\emptyset$, $\GN$ is relatively open in $\pa\Omega$, and $\GD$ has positive measure.
\item[(H2)] Data: $A\in L^{\infty}(\Omega)$, $\nD,\pD,\VD\in W^{1,\infty}(\Omega)$ satisfy $\nD,\pD\ge 0$ in $\Omega$, and $n_I,p_I,D_I\in L^2(\Omega)$ satisfy $n_I,p_I,D_I\geq 0$ in $\Omega$.
\item[(H3)] Elliptic regularity: There exists $r\ge 3$ such that for all $f\in L^{3r/(3+r)}(\Omega)$, there exists $C>0$ such that the weak solution $V$ to 
\begin{equation}\label{1.ellip}
  \Delta V=f\quad\mbox{in }\Omega, \quad V=\VD\quad\mbox{on }\GD, \quad \na V\cdot\nu=0\quad\mbox{on }\GN 
\end{equation}
satisfies
$\|V\|_{W^{1,r}(\Omega)}\le C\|f\|_{L^{3r/(3+r)}(\Omega)}+C$. Note that $3r/(3+r)=3/2$ if $r=3$.
\end{itemize}

Let us discuss Hypotheses (H1)--(H3). Our results are also valid in $d$-dimensional domains with more restrictive bounds on the exponents $\alpha_v$ ($v=n,p,D$) depending on $d\ge 1$. We consider the case $d=3$ because of its physical relevance and to simplify the notation. Moreover, we may allow for time-dependent boundary data; see, e.g., \cite[Sec.~2]{DGJ97}. 

The most restrictive condition is Hypothesis (H3). Indeed, for general elliptic problems \eqref{1.ellip} with mixed boundary conditions, we can only expect solutions $V\in W^{1,r}(\Omega)$ for some $r>2$ \cite{Gro94}. Under some conditions on the Dirichlet and Neumann boundary parts (in particular, $\GD$ and $\GN$ intersect with an ``angle'' not larger than $\pi$; see \cite[Prop.~3.4]{DiRe15}), the regularity improves to $r>3$ \cite[Theorem 4.8]{DiRe15}. If the domain is a two-dimensional polygon, precise regularity results can be found in \cite{Gri85}. Shamir's counterexample in \cite{Sha68} shows that $r\ge 4$ cannot be expected, even if the domain and the data are smooth. Generally, Hypothesis (H3) for some $r>3$ is satisfied if $\GD$ and $\GN$ do not meet in a ``too wild'' manner; see the examples in \cite[Prop.~7.1]{HaRe09}.

We introduce some notation. We set $\Omega_T:=\Omega\times(0,T)$ and for $q\ge 1$,
$$
  \WD^{1,q}(\Omega):=\{u\in W^{1,q}(\Omega):u=0\mbox{ on }\GD\}.
$$
Moreover, we write 
$$
  \sum_{v=n,p,D}F(v):=F(n)+F(p)+F(D), \quad \sum_{v=n,p,D}F(\bar{v})=F(\bar{n})+F(\bar{p})+F(\bar{D}) 
$$
for arbitrary functions $F$. Constants $C>0$ in the following computations are generic and may change their value from line to line. 

\begin{theorem}\label{thm.ex}
Let Hypotheses (H1)--(H3) with $r=3$ hold and assume that $6/5<\alpha_n,\alpha_p,\alpha_D\le 2$. Then there exists a solution $(n,p,D,V)$ to \eqref{1.n}--\eqref{1.ic} satisfying $n,p,D\ge 0$ in $\Omega_T$ and
\begin{align*}
  & n^{\alpha_n},p^{\alpha_p},D^{\alpha_D}\in L^\infty(0,T;L^1(\Omega))
  \cap L^2(0,T;H^1(\Omega)), \\
  & \pa_t n,\pa_t p\in L^2(0,T;\WD^{1,4/3}(\Omega)'), \quad 
  \pa_t D\in L^2(0,T;W^{1,4/3}(\Omega)'),\\
  & n^{\alpha_n-1/2},p^{\alpha_p-1/2},D^{\alpha_D-1/2}\in 
  L^2(0,T;H^1(\Omega)), \\
  & V\in L^\infty(0,T;H^1(\Omega))\cap L^2(0,T;W^{1,3}(\Omega)).
\end{align*}
The fluxes satisfy the regularity
$J_v\in L^2(0,T;L^{2\alpha_v/(\alpha_v+1)}(\Omega))$ for $v=n,p,D$. 
Moreover, if $n_I^{\alpha_n-1},p_I^{\alpha_p-1},D_I^{\alpha_D-1}\in L^2(\Omega)$ holds, then
$$
  n^{\alpha_n-1},p^{\alpha_p-1},D^{\alpha_D-1}\in 
  L^2(0,T;H^1(\Omega)).
$$
\end{theorem}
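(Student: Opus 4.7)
\medskip\noindent
\emph{Proof plan.}
The strategy is classical: build approximate solutions, derive uniform estimates from the free energy, improve them via the Gagliardo--Nirenberg/elliptic-regularity scheme of the introduction, and pass to the limit by Aubin--Lions compactness. I would combine an implicit time discretization of step $\tau>0$ with an elliptic regularization replacing $\na v^{\alpha_v}$ by $\na v^{\alpha_v}+\eps\na v$; each semi-implicit step is solved by Leray--Schauder on the decoupling map $(n,p,D)\mapsto V\mapsto(n^*,p^*,D^*)$. Nonnegativity is obtained by testing with the negative part of $v$ and using the sign structure of the degenerate flux. Testing the regularized equations with the entropy variables $h_v'(v)$, shifted by a lift of the Dirichlet data so as to land in $\WD^{1,q}(\Omega)$ for $v=n,p$ and in $W^{1,q}(\Omega)$ for $v=D$, together with the Poisson equation against $\pa_t(V-\VD)$, then gives a discrete analogue of \eqref{1.ei}, providing uniform bounds on $v^{\alpha_v}$ in $L^\infty(0,T;L^1(\Omega))$, on $J_v/\sqrt v$ in $L^2(\Omega_T)$, and on $\na(V-\VD)$ in $L^\infty(0,T;L^2(\Omega))$.

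The central step is the gradient bound for $v^{\alpha_v-1/2}$. Following the scheme sketched after \eqref{1.ei}, I combine the flux identity \eqref{1.nan}, Gagliardo--Nirenberg applied to $\sqrt v$ using the $L^{\alpha_v}$-bound, and hypothesis (H3) with $r=3$ to control $\|\na V\|_{L^3(\Omega)}$ by $\|v\|_{L^{3/2}(\Omega)}$. This produces an implicit inequality $\|\na v^{\alpha_v-1/2}\|_{L^2}\le C\|\na v^{\alpha_v-1/2}\|_{L^2}^{\gamma}+C$ which closes precisely when $\gamma<1$, i.e.\ for $\alpha_v>6/5$, yielding $v^{\alpha_v-1/2}\in L^2(0,T;H^1(\Omega))$. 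The flux bound $J_v\in L^2(0,T;L^{2\alpha_v/(\alpha_v+1)}(\Omega))$ follows by H\"older from the representation of $J_v$ in terms of $\na v^{\alpha_v}$ and $v\na V$ combined with $\na V\in L^3$. Reading the weak formulation against test functions in $\WD^{1,4/3}(\Omega)$ (resp.\ $W^{1,4/3}(\Omega)$ for $D$) yields the $\pa_t v$ bounds by duality; Aubin--Lions then gives strong convergence of $v$ in $L^2(\Omega_T)$ along a subsequence as $(\tau,\eps)\to 0$, and the potential converges strongly in $L^2(0,T;H^1(\Omega))$ by linearity of \eqref{1.V}, which suffices to identify every nonlinear term, including the drift $v\na V$.

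For the additional regularity under $n_I^{\alpha_n-1}\in L^2(\Omega)$ (and analogously for $p,D$) I would test the $\eps$-regularized equation with the nonsingular approximation $(n+\delta)^{\alpha_n-2}$ for $1<\alpha_n<2$. Its leading contribution on the right-hand side is $-(\alpha_n-2)\alpha_n\int n^{\alpha_n-1}(n+\delta)^{\alpha_n-3}|\na n|^2\dx$, which has the favorable sign and converges as $\delta\to 0$ to $\alpha_n(2-\alpha_n)(\alpha_n-1)^{-2}\int|\na n^{\alpha_n-1}|^2\dx$; the drift, which after rewriting $n^{\alpha_n-2}\na n=(\alpha_n-1)^{-1}\na n^{\alpha_n-1}$ equals a constant multiple of $\int\na n^{\alpha_n-1}\cdot\na V\dx$, is absorbed by Young's inequality using the $L^2(0,T;L^2(\Omega))$-bound on $\na V$, and the initial contribution is controlled by $\|n_I^{\alpha_n-1}\|_{L^2(\Omega)}$. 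The limit case $\alpha_v=2$ is handled by the analogous test with $\log(n+\delta)$, which delivers the $L^2(\Omega_T)$-bound on $\na n$ and uses $n_I\log n_I\in L^1(\Omega)$, which follows from $n_I\in L^2(\Omega)$.

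The hardest step is closing the self-improving inequality for $\na v^{\alpha_v-1/2}$: all information on $\na V$ is mediated through the densities via (H3), and, with mixed boundary conditions precluding full elliptic regularity, the Gagliardo--Nirenberg detour is the only route to gain the missing spatial integrability. This is where the exponent condition $\alpha_v>6/5$ enters and where the boundary-condition mismatch between $D$ (Neumann) and $n,p$ (mixed) forces the careful choice of test-function lifts and of the two different dual spaces for $\pa_t v$.
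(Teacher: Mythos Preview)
Your plan captures the essential architecture of the paper's proof: free-energy dissipation, the Gagliardo--Nirenberg/elliptic-regularity loop that closes the gradient bound on $v^{\alpha_v-1/2}$ precisely for $\alpha_v>6/5$, flux and time-derivative estimates, Aubin--Lions, and the $(n+\delta)^{\alpha_v-2}$ test for the additional regularity. The main structural difference is the approximation scheme: the paper uses a single two-sided truncation $T_k(v)=\min\{k,\max\{k^{-1},v\}\}$ in both the diffusion coefficient and the drift, together with tailored primitives $S_k^\gamma,R_k^\gamma$ built so that the chain rules $\na S_k^{\gamma-1}(v)=(\gamma-1)T_k(v)^{\gamma-2}\na v$ and $\pa_t R_k^\gamma(v)=\gamma S_k^{\gamma-1}(v)\pa_t v$ hold exactly, whereas you propose a time step plus an additive $\eps\na v$ regularization. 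The truncation buys two things you will have to supply by hand: the lower bound $T_k\ge k^{-1}$ keeps the entropy test function $h_v'(v)\sim v^{\alpha_v-1}$ smooth and in $H^1$ at the approximate level (with your scheme $v^{\alpha_v-2}$ is singular at $\{v=0\}$ when $\alpha_v<2$, so you need a further lower cutoff or to test with $h_v'(v+\delta)$), and nonnegativity is bypassed altogether since only $T_k(v)\ge 0$ is used in the limit.

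There is one genuine gap. You never say where the hypothesis $\alpha_v\le 2$ enters, and your Aubin--Lions step silently assumes spatial compactness of $v$ itself, which does not follow from $v^{\alpha_v-1/2}\in L^2(0,T;H^1)$ and $\pa_t v\in L^2(0,T;(W^{1,4/3})')$ alone. The paper inserts a separate lemma proving $v\in L^2(0,T;W^{1,\alpha_v}(\Omega))$: for $6/5<\alpha_v\le 3/2$ this is immediate from $\na v=(\alpha_v-1/2)^{-1}v^{3/2-\alpha_v}\na v^{\alpha_v-1/2}$ and H\"older, while for $3/2<\alpha_v\le 2$ one tests the equation with the (truncated) logarithm to obtain $\na v^{\alpha_v/2}\in L^2(\Omega_T)$ and then writes $\na v=(2/\alpha_v)v^{1-\alpha_v/2}\na v^{\alpha_v/2}$. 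It is exactly here that $\alpha_v\le 2$ is used, to guarantee $1-\alpha_v/2\ge 0$ so that the prefactor is controlled by the $L^\infty(0,T;L^{\alpha_v})$ bound. Without this bridge (or an explicit nonlinear Aubin--Lions lemma, which you do not invoke), the passage from bounds on $v^{\alpha_v-1/2}$ to strong convergence of $v$ is incomplete, and the identification of the limit of $v\na V$ and of $\na v^{\alpha_v}$ is not justified.
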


The upper bound $\alpha_v\le 2$ for $v=n,p,D$ is needed to derive a priori estimates for $n,p,D$ in $W^{1,\alpha_v}(\Omega)$ for $3/2\le\alpha_v\le 2$; see the proof of Lemma \ref{lem.grad}. Solutions to the porous-medium equation with exponent $\alpha$ in the whole space possess the optimal regularity in $L^\alpha(0,T;W^{1,\alpha}(\R^d))$ under the condition $\alpha\le 2$ \cite[Lemma D.1]{Ges21}, which indicates that our upper bound $\alpha_v\le 2$ is optimal.

As mentioned before, the proof of Theorem \ref{thm.ex} is based on the free energy inequality \eqref{1.ei} and the elliptic regularity assumed in Hypothesis (H3). To make inequality \eqref{1.ei} rigorous, we introduce suitable cutoff functions with parameter $k\in\N$ that satisfy the chain rule. A Leray--Schauder fixed-point argument shows the existence of approximate weak solutions $(n_k,p_k,D_k,V_k)$. The limit $k\to\infty$ can be performed after deriving the uniform bounds sketched in the previous subsection, and the limit function turns out to be a weak solution to \eqref{1.n}--\eqref{1.ic}.

\begin{theorem}[Regularity]\label{thm.regul}
Let the assumptions of Theorem \ref{thm.ex} hold. If additionally
$\alpha_n,\alpha_p,$ $\alpha_D>\alpha^*:=(11+\sqrt{37})/14\approx 1.22$ and $n_I,p_I,D_I\in L^\infty(\Omega)$ hold, then the weak solution constructed in Theorem \ref{thm.ex} satisfies
$$
  n,p,D\in L^\infty(0,T;L^q(\Omega))\quad\mbox{for all }1\le q<\infty,
  \quad V\in L^\infty(0,T;W^{1,3}(\Omega)).
$$
Moreover, if additionally Hypothesis (H3) holds for some $r>3$, the regularity improves to
$$
  n,p,D\in L^\infty(0,T;L^\infty(\Omega)),
  \quad V\in L^\infty(0,T;W^{1,r}(\Omega)).
$$%
\end{theorem}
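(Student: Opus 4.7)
The plan is to derive a family of $L^q$ estimates for $n,p,D$ by testing the (suitably regularized, in the spirit of the approximation scheme of Section~\ref{sec.ex}) equations for $n$, $p$, $D$ with $n^{q-1}-\nD^{q-1}$, $p^{q-1}-\pD^{q-1}$, and $D^{q-1}$ respectively, and then bootstrapping in $q$ via Gagliardo--Nirenberg together with Hypothesis (H3). With these choices the boundary contributions vanish: the test functions are zero on $\GD$, while on $\GN$ the no-flux conditions $J_v\cdot\nu=0$ eliminate the remaining surface terms. After integration by parts and insertion of \eqref{1.V}, the equation for $n$ yields
\begin{align*}
\frac{1}{q}\frac{d}{dt}\|n\|_{L^q}^q + c_q\|\na n^{(q+\alpha_n-1)/2}\|_{L^2}^2 + \frac{q-1}{\lambda^2}\|n\|_{L^{q+1}}^{q+1}
\le \frac{q-1}{\lambda^2}\int_\Omega n^q(p+D+|A|)\,\dx + R_q,
\end{align*}
with analogous inequalities for $p$ and $D$; here $R_q$ collects lower-order contributions from $\nD,\pD,\VD$.

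The key technical step is to control the drift in a scale-sharp way, so that it can be absorbed by the dissipation. I would return to the raw form $\int_\Omega\na n^q\cdot\na V\,\dx$ and estimate
$$
\bigg|\int_\Omega\na n^q\cdot\na V\,\dx\bigg| \le C\,\|n^{(q-\alpha_n+1)/2}\|_{L^{s_1}}\,\|\na n^{(q+\alpha_n-1)/2}\|_{L^2}\,\|\na V\|_{L^{s_2}},\qquad \tfrac{1}{s_1}+\tfrac{1}{s_2}=\tfrac{1}{2},
$$
then invoke (H3) (valid for $s_2\le r$) to replace $\|\na V\|_{L^{s_2}}$ by $\|n-p-D+A\|_{L^{3s_2/(3+s_2)}}+1$, and interpolate the two resulting $L^a$ norms of $n$ by Gagliardo--Nirenberg between $\|n\|_{L^q}$ (the inductive hypothesis) and the dissipative quantity $\|\na n^{(q+\alpha_n-1)/2}\|_{L^2}$. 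Absorbing the gradient factor by Young's inequality requires the total exponent on $\|\na n^{(q+\alpha_n-1)/2}\|_{L^2}$ to be strictly less than $2$; writing out the GN exponents in three space dimensions reduces this to a quadratic inequality in $\alpha_n$ whose positive root is precisely $\alpha^*=(11+\sqrt{37})/14$ (equivalently, $7\alpha^2-11\alpha+3>0$). Thus the hypothesis $\alpha_n,\alpha_p,\alpha_D>\alpha^*$ is exactly what is needed to close the absorption.

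Summing the three resulting inequalities and applying Gronwall then propagates an $L^\infty(0,T;L^q)$ bound from one step to the next: starting from the $L^\infty(0,T;L^{\alpha_v})$ bound of Theorem~\ref{thm.ex}, together with the assumption $n_I,p_I,D_I\in L^\infty(\Omega)$ on the initial data, a finite bootstrap in $q$ yields $n,p,D\in L^\infty(0,T;L^q(\Omega))$ for every $1\le q<\infty$. Feeding this into Hypothesis (H3) with $r=3$, via $n-p-D+A\in L^\infty(0,T;L^{3/2}(\Omega))$, gives at once $V\in L^\infty(0,T;W^{1,3}(\Omega))$ and completes the first part of the statement.

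For the $L^\infty$ conclusion under (H3) with some $r>3$, the additional regularity of $V$ provides the slack needed for an Alikakos/Moser-type iteration: $\|\na V\|_{L^r}$ is now bounded uniformly in time through the already-established $L^\infty(L^q)$ control, and testing the equation with $n^{q-1}$ while carefully tracking the $q$-dependence of all constants allows one to close a doubling step $q\mapsto 2q$ with constants growing only polynomially in $q$. Sending $q\to\infty$ gives $n,p,D\in L^\infty(\Omega_T)$, and one last application of (H3) produces $V\in L^\infty(0,T;W^{1,r}(\Omega))$. The main obstacle throughout is the Gagliardo--Nirenberg/Young bookkeeping: one has to verify that the admissible ranges of interpolation exponents are nonempty and that the Young absorption coefficient is strictly below the dissipation constant uniformly in $q$, which is precisely where the threshold $\alpha^*$ arises in the first part and where the condition $r>3$ becomes essential for closing the $L^\infty$ iteration in the second.
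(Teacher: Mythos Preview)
Your approach is essentially that of the paper: test the equations with powers of the densities, estimate the drift via H\"older, Hypothesis~(H3), and Gagliardo--Nirenberg, bootstrap in the exponent to reach $L^\infty(0,T;L^q)$ for all $q<\infty$, and then run an Alikakos-type iteration once $\na V\in L^\infty(0,T;L^{r})$ with $r>3$. The paper organizes the bootstrap into two phases---first an explicit recursion $\gamma_{m+1}=c(\alpha)+\gamma_m/3$ (using only the time-integrable bound $\na V\in L^{(5\alpha-3)/(3-2\alpha)}_tL^3_x$ from Lemma~\ref{lem.grad}) whose fixed point exceeds $\tfrac12$ exactly when $7\alpha^2-11\alpha+3>0$, so $\alpha^*$ is the threshold for the iteration to \emph{reach} $L^{3/2}$ rather than a single-step absorption constraint, and then a one-shot estimate for every $q$ once $\na V\in L^\infty_tL^3_x$ (needing only $\alpha>1$); you are also right to abandon your first displayed inequality, since integrating by parts against $\Delta V$ there would leave precisely the uncontrolled boundary term $\int_{\GD}\nD^q\,\na V\cdot\nu$ flagged in~\eqref{1.aux}.
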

Bounded weak solutions to drift--diffusion systems were obtained in \cite{GaGr96} for two species and in \cite{BGN22} for multiple species, but the technique in the latter work seems to work only for linear diffusion. In two space dimensions, the solutions to the memristor model \eqref{1.n}--\eqref{1.ic} are bounded \cite{JJZ23}; also see \cite{GlHu97}. The restriction to two space dimensions comes from the regularity $V\in W^{1,q}(\Omega)$ with $q>2$, due to the mixed boundary conditions. Upper bounds for the densities to a two-species degenerate drift--diffusion model were found in \cite{Jue95} but under the assumption $V\in W^{2,q}(\Omega)$ for $q>3$. 

The first step of the proof of Theorem \ref{thm.regul} is an estimate for $n$ (and $p$, $D$) in $L^\infty(0,T;$ $L^{3/2}(\Omega))$. This follows from the energy inequality \eqref{1.ei} if $\alpha_n\ge 3/2$. If $\alpha_n<3/2$, we use an iteration argument, which seems to be new in this context. Assuming that $n$ is bounded in  $L^\infty(0,T;L^{\gamma_m+1}(\Omega))$, the aim is to derive a bound for $n$ in $L^\infty(0,T;L^{\gamma_{m+1}+1}(\Omega))$ for some $\gamma_{m+1}>\gamma_m$. It turns out that $(\gamma_m)$ satisfies a linear difference equation, whose solution satisfies $\gamma_m+1\to c(\alpha_n)$ as $m\to\infty$ for some $c(\alpha_n)>0$. The condition $\alpha_n>\alpha^*$ is necessary to ensure that $c(\alpha_n)\ge 3/2$, proving the claim $n\in L^\infty(0,T;L^{3/2}(\Omega))$. The second step of the proof is the derivation of $L^\infty(0,T;L^{\gamma+1}(\Omega))$ estimates for any $\gamma<\infty$ by choosing (a cutoff of) $n^\gamma-\nD^\gamma$ as a test function and applying the Gagliardo--Nirenberg inequality. 

Unfortunately, the $L^{\gamma+1}(\Omega)$ estimate depends on $\gamma$, and we cannot pass to the limit $\gamma\to \infty$ in this step. Therefore, we need slightly more regularity for the potential gradient in $L^r(\Omega)$ with $r>3$. This regularity allows us, in the third step, to apply an Alikakos-type iteration technique \cite{Ali79} which yields estimates for the densities in $L^{2^k}(\Omega)$ uniformly in $k\in\N$. The idea of the Alikakos method is to derive an estimate of the type
$$
  \|D\|_{L^{\gamma+1}(\Omega)}\le C + C\gamma^\beta\|D\|_{L^{(\gamma+1)/2}(\Omega)}\quad\mbox{for some }
  \beta>0.
$$
The halved exponent compensates the $\gamma$-dependent constant. In the degenerate case, the exponent is not halved, since we obtain
$$
  \|D\|_{L^{\gamma+1}(\Omega)}\le C + C\gamma^\beta\|D\|_{L^{(\gamma+\alpha_D)/2}(\Omega)}, 
  \quad\mbox{where }\alpha_D>1.
$$
We show that the Alikakos technique can be extended to the degenerate case. While the boundedness of solutions with linear diffusion was shown in two space dimensions, the degeneracy allows us to prove this result in three space dimensions. Theorem \ref{thm.regul} is the most original part of the paper.

\begin{theorem}[Weak--strong uniqueness]\label{thm.wsu}
Let the assumptions of Theorem \ref{thm.ex} hold. Let $(n,p,D,V)$ be a bounded weak solution to \eqref{1.n}--\eqref{1.ic}, satisfying the regularity stated in Theorem \ref{thm.ex}. Furthermore, let $(\bar{n},\bar{p},\bar{D},\bar{V})$ be a strong solution to \eqref{1.n}--\eqref{1.ic} in the sense that there exists $m>0$ such that $\bar{n},\bar{p},\bar{D}\ge m>0$ in $\Omega_T$ and
\begin{align*}
  & \bar{n},\bar{p},\bar{D}\in L^\infty(\Omega_T), \quad
  \pa_t\bar{n},\pa_t\bar{p}\in L^2(0,T;H_D^1(\Omega)'), 
  \quad \pa_t\bar{D}\in L^2(0,T;H^1(\Omega)'), \\
  & h_n'(\bar{n})-\bar{V},\ h_p'(\bar{p})+\bar{V},
  \ h_D'(\bar{D})+\bar{V}
  \in L^\infty(0,T;W^{2,\infty}(\Omega)).
\end{align*}
Then $(n,p,D,V)=(\bar{n},\bar{p},\bar{D},\bar{V})$ in $\Omega_T$.
\end{theorem}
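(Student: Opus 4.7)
The plan is to prove weak--strong uniqueness via a relative free energy (relative entropy) argument, the standard tool for quasilinear drift--diffusion systems when a sufficiently smooth reference solution is available. Define
\[
\mathcal E(t) := \sum_{v=n,p,D}\int_\Omega\bigl(h_v(v)-h_v(\bar v)-h_v'(\bar v)(v-\bar v)\bigr)\dx + \frac{\lambda^2}{2}\int_\Omega|\na(V-\bar V)|^2\dx.
\]
Since $\alpha_v>1$, each $h_v$ is strictly convex; combined with $m\le\bar v\le M$ and the assumed boundedness of $(n,p,D)$, the first integrand is pointwise comparable to $(v-\bar v)^2$. Hence $\mathcal E$ dominates the $L^2$-distance of the densities and the $H^1$-distance of the potentials, and $\mathcal E(0)=0$ because both solutions share their initial data. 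The goal is to derive $\mathcal E'(t)\le C(T)\mathcal E(t)$ and apply Gronwall.

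Next I would differentiate $\mathcal E$ in time, using as dual variables the relative electrochemical potentials
\[
\Phi_n := \bigl(h_n'(n)-V\bigr)-\bigl(h_n'(\bar n)-\bar V\bigr),\quad \Phi_p := \bigl(h_p'(p)+V\bigr)-\bigl(h_p'(\bar p)+\bar V\bigr),
\]
and analogously $\Phi_D$. A short computation rewrites each flux as $J_n=n\na(h_n'(n)-V)+(n/(\alpha_n-1))\na\nD^{\alpha_n-1}$, with analogous formulas for $J_p,J_D$, so that the subtracted flux splits as $J_n-\bar J_n=n\na\Phi_n+(n-\bar n)\na(h_n'(\bar n)-\bar V)+(\alpha_n-1)^{-1}(n-\bar n)\na\nD^{\alpha_n-1}$. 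The functions $\Phi_n,\Phi_p$ vanish on $\GD$ (since $n,\bar n,V,\bar V$ share their Dirichlet traces there) and are admissible test functions in $\WD^{1,4/3}(\Omega)$, while $\Phi_D$ is admissible in $W^{1,4/3}(\Omega)$. Combining the chain-rule identity for the internal-energy parts with the time derivative of the electric energy (the latter handled by integration by parts against the differenced Poisson equation, boundary terms vanishing since $V-\bar V=0$ on $\GD$ and $\na(V-\bar V)\cdot\nu=0$ on $\GN$) produces
\[
\frac{\dd}{\dt}\mathcal E(t)+\sum_{v=n,p,D}\int_\Omega v\,|\na\Phi_v|^2\dx = \mathcal R(t),
\]
where $\mathcal R(t)$ collects bilinear cross-terms involving $(v-\bar v)$, $\na\Phi_v$, and coefficients built from $\bar v$, $\na\bar v$, and the first two derivatives of $h_v'(\bar v)\pm\bar V$, all of which are in $L^\infty$ by the regularity hypotheses on the strong solution.

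Each term in $\mathcal R(t)$ is then estimated by Young's inequality, splitting gradient pieces into parts absorbed by the dissipation $\int v|\na\Phi_v|^2\dx$ and quadratic remainders bounded by $C(T)\mathcal E(t)$ via coercivity. This gives $\mathcal E'(t)\le C(T)\mathcal E(t)$; Gronwall combined with $\mathcal E(0)=0$ forces $\mathcal E\equiv 0$ on $[0,T]$, so $(n,p,D)=(\bar n,\bar p,\bar D)$ a.e.\ by strict convexity and $V=\bar V$ follows from the Poisson equation with shared data. The main obstacle is the rigorous derivation of the identity for $\mathcal E'$: the test function $h_v'(v)-h_v'(\bar v)$ is a nonlinear function of the merely weakly differentiable density $v$, whose gradient behaves like $v^{\alpha_v-2}\na v$ and may be singular on $\{v=0\}$ when $1<\alpha_v<2$. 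The chain rule $\frac{\dd}{\dt}\int h_v(v)\dx=\langle\pa_t v,h_v'(v)\rangle$ must therefore be justified by a time-regularization (Steklov averages), and controlling the cross-terms in $\mathcal R(t)$ despite the degenerate dissipation $\int v|\na\Phi_v|^2\dx$ requires exploiting the boundedness of $v$ and the lower bound $\bar v\ge m>0$ simultaneously.
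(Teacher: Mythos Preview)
Your overall strategy---relative free energy plus Gronwall---is exactly the paper's, and your setup of $\mathcal E$, the coercivity $(v-\bar v)^2\le C h_v(v|\bar v)$, and the dual variables $\Phi_v$ matches. The difference lies in how the remainder $\mathcal R(t)$ is controlled, and here your plan has a genuine gap.

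You propose to absorb the cross-terms in $\mathcal R(t)$ into the dissipation $\int_\Omega v|\na\Phi_v|^2\dx$ via Young's inequality. But after extracting the dissipation, the remainder contains a term of the form
\[
-\int_\Omega \na(h_n'(\bar n)-\bar V)\cdot\bigl[n\na(h_n'(n)-h_n'(\bar n))+\bar n\,\na\bigl(h_n''(\bar n)(n-\bar n)\bigr)\bigr]\dx,
\]
which carries a full gradient of the \emph{weak} density $n$. Splitting off $\sqrt{n}\,\na\Phi_n$ from such a term leaves factors like $(n-\bar n)/\sqrt{n}$, which blow up on $\{n=0\}$ since only $\bar n\ge m>0$ is assumed; the degenerate weight $v$ in the dissipation cannot absorb this, and neither boundedness of $v$ nor the lower bound on $\bar v$ fixes it. You correctly flag this degeneracy as the ``main obstacle'' but do not explain how to overcome it.

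The paper bypasses absorption entirely by exploiting a power-law identity specific to $h_v(v)=v^{\alpha_v}/(\alpha_v-1)$:
\[
n\na(h_n'(n)-h_n'(\bar n))+\bar n\,\na\bigl(h_n''(\bar n)(n-\bar n)\bigr)=(\alpha_n-1)\,\na h_n(n|\bar n).
\]
This converts the bracketed expression into a pure gradient of the relative entropy density. One then integrates by parts (using $h_n(n|\bar n)=0$ on $\GD$ and the no-flux structure on $\GN$) to place $\Delta(h_n'(\bar n)-\bar V)\in L^\infty$ on the smooth factor, leaving only the zeroth-order term $\int_\Omega h_n(n|\bar n)\dx$. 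The remaining piece $\int_\Omega(n-\bar n)\na(h_n'(\bar n)-\bar V)\cdot\na(V-\bar V)\dx$ is handled by Cauchy--Schwarz. With this identity the dissipation is simply discarded, and $\mathcal E'\le C\mathcal E$ follows directly; no absorption is needed, and no division by $v$ ever occurs. This identity, together with its boundary compatibility, is the missing ingredient in your proposal.
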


The uniqueness of solutions to drift--diffusion equations is a delicate issue because of the simultaneous presence of degenerate diffusion and nonlinear drift. Often, uniqueness results need additional assumptions, like  boundedness of the fluxes \cite[Theorem 3.2]{GaGr89} or, in case of nonlinear diffusion fluxes, the regularity $V\in W^{1,q}(\Omega)$ for $q>d$ with $d$ being the space dimension; see \cite[Theorem 5.1]{Gaj94} and \cite[Theorem 6.1]{GaGr96}. The uniqueness of weak solutions in two dimensions was proved in \cite{GlHu97}, using the regularity $V\in W^{1,q}(\Omega)$ for some $q>2$. Uniqueness results for degenerate drift--diffusion equations under additional conditions have been proved in \cite{DGJ01,Jue97}. Therefore, we restrict ourselves to show the weak--strong uniqueness property.
Note that with the higher elliptic regularity $r>3$, the weak solutions constructed in Theorem~\ref{thm.regul} satisfy the assumptions of Theorem \ref{thm.wsu}.

The proof of this theorem is based on the relative free energy, which is defined by
\begin{equation}\label{1.relent}
  H[n,p,D|\bar{n},\bar{p},\bar{D}] = \int_\Omega\bigg(h_n(n|\bar{n})
  + h_p(p|\bar{p}) + h_D(D|\bar{D}) + \frac{\lambda^2}{2}
  |\na(V-\bar{V})|^2\bigg)\dx,
\end{equation}
where the relative entropy density is given by
\begin{equation}\label{1.rel}
  h_v(v|\bar{v}) = h_v(v) - h_v(\bar{v}) - h'_v(\bar{v})(v-\bar{v})
  \quad\mbox{and}\quad h_v(v) = \frac{v^{\alpha_v}}{\alpha_v-1}, 
  \quad v=n,p,D.
\end{equation}
Let $(n,p,D,V)$ and $(\bar{n},\bar{p},\bar{D},\bar{V})$ be two solutions to \eqref{1.n}--\eqref{1.ic} as described in Theorem \ref{thm.wsu}. A computation, detailed in Section \ref{sec.wsu}, shows that
\begin{align*}
  \frac{\dd}{\dt}&H[n,p,D|\bar{n},\bar{p},\bar{D}]
  + \sum_{v=n,p,D}\int_\Omega v
  \big|\na\big((h'_v(v)-V)-(h'_v(\bar{v})-\bar{V})\big)\big|^2\dx \\
  &\le C\sum_{v=n,p,D}\int_\Omega h_v(v|\bar{v})\dx+ C\sum_{v=n,p,D}\|\na(V-\bar{V})\|_{L^2(\Omega)}
  \|v-\bar{v}\|_{L^2(\Omega)},
\end{align*}
where $C>0$ depends on the $W^{2,\infty}(\Omega)$ norm of $h'_v(\bar{v})-\bar{V}$. Since $(n,p,D)$ is assumed to be bounded, the inequality $(v-\bar{v})^2\le Ch_v(v|\bar{v})$ holds for $v=n,p,D$ (see \eqref{4.relentL2}). We conclude from Young's inequality that
$$
  \frac{\dd}{\dt}H[n,p,D|\bar{n},\bar{p},\bar{D}]
  \le CH[n,p,D|\bar{n},\bar{p},\bar{D}],
$$
and since $(n,p,D)$ and $(\bar{n},\bar{p},\bar{D})$ have the same initial data, Gronwall's lemma implies that both solutions coincide, proving the theorem.

\begin{remark}\rm
Our results are valid for an arbitrary number of charged particle species, like in ion transport. In this situation, the equations for the charge densities $u_i$ are
$$
  \pa_t u_i = \diver(\na u_i^{\alpha_i}+u_iz_i\na V), \quad
  i=1,\ldots,n, \quad \lambda^2\Delta V = \sum_{i=1}^n z_iu_i + A(x),
$$
where $z_i\in\R$ are the ionic charges, the exponents $\alpha_i>1$ satisfy the conditions imposed in the theorems, and initial and mixed boundary conditions are chosen. The reason that the results are valid for such systems is that we use the Poisson equation only through the $L^q(\Omega)$ norm of $\na V$ so that the drift terms can be handled as in the following sections.
\qed\end{remark}

The paper is organized as follows. Theorem \ref{thm.ex} is proved in Section \ref{sec.ex}. The regularity results of Theorem \ref{thm.regul} are shown in Section \ref{sec.regul}, and the weak--strong uniqueness property of Theorem \ref{thm.wsu} is proved in Section \ref{sec.wsu}.


\section{Existence of solutions}\label{sec.ex}

The aim of this section is to prove Theorem \ref{thm.ex}. We solve system \eqref{1.n}--\eqref{1.ic} by truncating the nonlinearities similarily as in \cite{JJZ23} but with a slightly different truncation. The existence of approximate solutions, based on the Leray--Schauder fixed-point theorem, is analogous to the one in \cite{JJZ23}. The approximate free energy inequality, similar to \eqref{1.ei}, is independent of the truncation parameter $k\in\N$. After deriving further uniform bounds, we apply the Aubin--Lions compactness lemma to pass to the limit $k\to\infty$ and obtain the existence of a solution to \eqref{1.n}--\eqref{1.ic}.

\subsection{Truncated system}

Let $k\in\N$, $k\ge 2$, and set
$$
  T_k(v) := \min\{k,\max\{k^{-1},v\}\}\in[k^{-1},k]\quad\mbox{for }
  v\in\R.
$$
We consider the regularized problem
\begin{align}
  \pa_t n_k &= \diver\big(\alpha_n T_k(n_k)^{\alpha_n-1}\na n_k
  - T_k(n_k)\na V_k\big), \label{2.nk} \\
  \pa_t p_k &= \diver\big(\alpha_p T_k(p_k)^{\alpha_p-1}\na p_k
  + T_k(p_k)\na V_k\big), \label{2.pk} \\
  \pa_t D_k &= \diver\big(\alpha_D T_k(D_k)^{\alpha_D-1}\na D_k
  + T_k(D_k)\na V_k\big), \label{2.Dk} \\
  \lambda^2\Delta V_k &= n_k-p_k-D_k+A(x)\quad\mbox{in }\Omega,\ t>0,
  \label{2.Vk}
\end{align}
subject to the initial conditions and mixed boundary conditions
\begin{align}
  n_k(\cdot,0)=n_I, \quad p_k(\cdot,0)=p_I, \quad D_k(\cdot,0)=D_I
  &\quad\mbox{in }\Omega, \label{2.ic} \\
  n_k=\nD, \quad p_k=\pD, \quad V_k=\VD
  &\quad\mbox{on }\GD,\ t>0, \label{2.Dbc} \\
  \na n_k\cdot\nu = \na p_k\cdot\nu = \na V_k\cdot\nu = 0
  &\quad\mbox{on }\GN,\ t>0, \label{2.Nbc} \\
  \na  D_k\cdot\nu = 0 &\quad\mbox{on }\pa\Omega,\ t>0. \label{2.JD}
\end{align}

\begin{lemma}\label{lem.exk}
Let Hypotheses (H1)--(H3) hold. Then there exists a weak solution $(n_k,p_k,D_k,$ $V_k)$ to \eqref{2.nk}--\eqref{2.JD} satisfying 
\begin{align*}
  & n_k,\,p_k,\,D_k,\,\in L^2(0,T;H^1(\Omega)), \quad
  V_k\in L^2(0,T;H^1(\Omega)), \\
  & \pa_t n_k,\,\pa_t p_k\in L^2(0,T;H^1_D(\Omega)'), \quad\pa_t D_k\in L^2(0,T;H^1(\Omega)').
\end{align*}
\end{lemma}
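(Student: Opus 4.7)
My plan follows the Leray--Schauder fixed-point strategy of \cite{JJZ23}, adapted to the different truncation $T_k$. Given $(\tilde n,\tilde p,\tilde D)\in L^2(\Omega_T)^3$, I first solve the Poisson equation $\lambda^2\Delta V=\tilde n-\tilde p-\tilde D+A$ with the mixed boundary conditions. Since $\VD\in W^{1,\infty}(\Omega)$ already extends the Dirichlet datum to all of $\Omega$, Lax--Milgram applied to $V-\VD\in H^1_D(\Omega)$ (Friedrichs' inequality holds because $\GD$ has positive measure by (H1)) yields a unique $V\in H^1(\Omega)$, and Hypothesis (H3) with $r=3$ then lifts the regularity to $V\in W^{1,3}(\Omega)$. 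With $V$ fixed, the three parabolic equations decouple into linear problems. For example, the $n$-equation reads
\begin{equation*}
  \pa_t n-\alpha_n\diver\bigl(T_k(\tilde n)^{\alpha_n-1}\na n\bigr)
  = -\diver\bigl(T_k(\tilde n)\na V\bigr),
\end{equation*}
with uniformly elliptic diffusion coefficient satisfying $\alpha_n k^{-(\alpha_n-1)}\le \alpha_n T_k(\tilde n)^{\alpha_n-1}\le \alpha_n k^{\alpha_n-1}$ and right-hand side of the form $\diver(F)$ with $F\in L^2(\Omega_T)^3$. A standard Galerkin construction applied to $n-\nD$ on $H^1_D(\Omega)$ produces a unique weak solution $n\in L^2(0,T;H^1(\Omega))$ with $\pa_t n\in L^2(0,T;H^1_D(\Omega)')$; analogously for $p$, and for $D$ with pure Neumann data on $H^1(\Omega)$ (yielding $\pa_t D\in L^2(0,T;H^1(\Omega)')$). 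This defines a map $\Phi:(\tilde n,\tilde p,\tilde D)\mapsto(n,p,D)$ on $L^2(\Omega_T)^3$.

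Next I would embed $\Phi$ into a homotopy $\Phi_\sigma$, $\sigma\in[0,1]$, for instance by multiplying the Poisson source, the drift terms, and the initial data by $\sigma$, so that $\Phi_0\equiv 0$. Continuity of $\Phi_\sigma$ in $(\tilde n,\tilde p,\tilde D,\sigma)$ follows from linear stability of both the Poisson and the truncated parabolic problems together with the Lipschitz continuity of $T_k$. Compactness of $\Phi_\sigma$ is supplied by the Aubin--Lions lemma via the embedding chain $H^1(\Omega)\hookrightarrow\hookrightarrow L^2(\Omega)\hookrightarrow H^1_D(\Omega)'$ (and analogously with $H^1(\Omega)'$ for the $D$-component).

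For the $\sigma$-uniform bound on fixed points $(n,p,D)=\Phi_\sigma(n,p,D)$, I would test the $n$-equation with $n-\sigma\nD$. The uniform ellipticity gives the coercive term $\alpha_n k^{-(\alpha_n-1)}\|\na(n-\sigma\nD)\|_{L^2(\Omega)}^2$, while the drift is controlled by Hölder and Young's inequalities once $\|\na V\|_{L^2(\Omega)}$ is bounded by $C(1+\|n\|_{L^2(\Omega)}+\|p\|_{L^2(\Omega)}+\|D\|_{L^2(\Omega)})$ via the Poisson equation. The analogous tests for $p$ and $D$ close the system and deliver $\sigma$-uniform bounds in $L^\infty(0,T;L^2(\Omega))\cap L^2(0,T;H^1(\Omega))$, from which the time-derivative bounds follow directly from the equations. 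Leray--Schauder then produces a fixed point of $\Phi_1$, which is the desired weak solution $(n_k,p_k,D_k,V_k)$. The main obstacle is bookkeeping rather than conceptual: correctly handling the mixed boundary conditions (Dirichlet for $n,p,V$ versus Neumann for $D$) when selecting test functions and function spaces, and ensuring the $k$-dependent (yet $\sigma$-uniform) ellipticity constants interact correctly with the Poisson-based estimate of $\|\na V\|_{L^2(\Omega)}$ so that the a priori estimate closes without circularity.
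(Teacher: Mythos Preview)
Your proposal is correct and follows essentially the same approach as the paper: both invoke the Leray--Schauder strategy of \cite{JJZ23}, adapted to the strictly positive cutoff $T_k\ge k^{-1}$, and both rely on the fact that the truncated diffusion coefficients are bounded above and below so that the frozen linear problems are uniformly parabolic. The paper's own proof is merely a two-sentence reference to \cite{JJZ23} noting the minor differences (nonlinear but bounded diffusion, loss of nonnegativity at the approximate level), while you spell out the fixed-point construction and the $\sigma$-uniform a priori estimate in more detail; your sketch of the closing estimate via testing with $n-\sigma\nD$, bounding the drift by $k\|\na V\|_{L^2}\|\na n\|_{L^2}$, and absorbing via Young and Gronwall is exactly how the $k$-dependent but $\sigma$-independent bound is obtained.
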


\begin{proof}
The proof is analogous to the proof of Lemma 2.1 in \cite{JJZ23} with the difference that we use the strictly positive cutoff $T_k(v)\ge k^{-1}>0$ and that equations \eqref{2.nk}--\eqref{2.Dk} are nonlinear in the diffusion term. However, since the truncated diffusion coefficients are strictly positive and bounded, the proof still applies. Compared to \cite{JJZ23}, we cannot conclude that $n_k$, $p_k$, and $D_k$ are nonnegative. 
\end{proof}

%

\subsection{Auxiliary functions}

For the derivation of uniform estimates, we need some auxiliary functions, which preserve the free energy structure and involve the cutoff $T_k$. Let $\gamma>1$, $v\in\R$ and introduce the functions
\begin{align*}
  & S_k^{\gamma-1}(v) = (\gamma-1)\int_0^v T_k(y)^{\gamma-2}\dd y, \quad
  S_k^0(v) = \int_0^v\frac{\dd y}{T_k(y)}, \quad
  R_k^\gamma(v) = \gamma\int_0^v S_k^{\gamma-1}(y)\dd y.
\end{align*}
These functions are constructed in such a way that the chain rules
\begin{equation}\label{2.chain}
\begin{aligned}
  & \na S_k^{\gamma-1}(v) = (\gamma-1)T_k(v)^{\gamma-2}\na v, \quad\na S_k^0(v) = \frac{\na v}{T_k(v)}, \quad
  \pa_t R_k^\gamma(v) = \gamma S_k^{\gamma-1}(v)\pa_t v
\end{aligned}
\end{equation}
hold for suitable smooth functions $v$. The functions $(S_k^{\gamma-1},S_k^0,R_k^\gamma)$ approximate $(v^{\gamma-1},\log v,$ $v^\gamma)$.
They satisfy the following inequalities.

\begin{lemma}\label{lem.RST}
There exists $C>0$ such that for sufficiently large $k\in\N$ and for all $v\in\R$,
\begin{align}
  T_k(v)^\gamma &\le S_k^\gamma(v) + C \quad\mbox{for }\gamma>0, \label{2.TS} \\
  (S_k^\gamma(v))^{\beta/\gamma} &\le CR_k^\beta(v) + C\quad\mbox{for }
  \beta>1,\,\gamma\ge \beta/2, \label{2.SR} \\
  v &\le CS_k^\beta(v)^{1/\beta} + C\quad\mbox{for }v\ge 0\mbox{ and }
  0<\beta\le 1. \label{2.vS}
\end{align}
Furthermore, for any $\delta>0$, there exists $C(\delta)>0$ such that for $\beta>1$ and $v\ge 0$,
\begin{equation}\label{2.vR}
  v\le \delta R^\beta_k(v) + C(\delta).
\end{equation}
\end{lemma}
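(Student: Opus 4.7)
My plan is to verify all four inequalities by explicit piecewise computation in the three regions determined by $T_k$. Since $T_k(y)$ equals $k^{-1}$ on $[0,k^{-1}]$, $y$ on $[k^{-1},k]$, and $k$ on $[k,\infty)$, a direct integration gives
\[
  S_k^\gamma(v) = \begin{cases} \gamma k^{1-\gamma} v & \text{for } 0\le v \le k^{-1}, \\ v^\gamma + (\gamma-1)k^{-\gamma} & \text{for } k^{-1}\le v \le k, \\ k^\gamma + (\gamma-1)k^{-\gamma} + \gamma k^{\gamma-1}(v-k) & \text{for } v\ge k, \end{cases}
\]
and an analogous closed form for $R_k^\beta$ by integrating $S_k^{\beta-1}$ once more. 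These expressions match $v^\gamma$ (resp.\ $v^\beta$) on the middle region up to an $O(k^{-\gamma})$ correction, become linear in $v$ on the outer regions for $S_k^\gamma$, and grow quadratically for $R_k^\beta$ on $[k,\infty)$. Throughout I would focus on $v \ge 0$, which is the relevant regime for the applications.

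Inequality \eqref{2.TS} then follows directly in each region: the difference $T_k(v)^\gamma - S_k^\gamma(v)$ is at most $|\gamma-1|k^{-\gamma}$ or $k^{-\gamma}$, both bounded by $1$ for $k$ large. For \eqref{2.vS} with $0<\beta\le 1$, on $[k^{-1},k]$ the dominant term $v^\beta$ in $S_k^\beta(v)$ gives $v\le 2^{1/\beta} S_k^\beta(v)^{1/\beta}$ once $v^\beta\ge 2(1-\beta)k^{-\beta}$; the complementary subset sits in a bounded interval, and $[0,k^{-1}]$ is a priori bounded. On $[k,\infty)$ I would exploit the pointwise bound $T_k(y)^{\beta-1}\ge k^{\beta-1}$ (valid since $\beta-1\le 0$) to get $S_k^\beta(v)\ge \beta k^{\beta-1} v$, and then combine it with $v^{1/\beta-1}\ge k^{(1-\beta)/\beta}$ (since $1/\beta\ge 1$ and $v\ge k$) to conclude $S_k^\beta(v)^{1/\beta}\ge \beta^{1/\beta} v$.

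The hard step is \eqref{2.SR}, where the hypothesis $\gamma\ge \beta/2$ must emerge. On $[k^{-1},k]$ both sides scale like $v^\beta$ and the bound is routine. In the dangerous regime $v\ge 2k$ one has $S_k^\gamma(v)\le C k^{\gamma-1} v$ together with the tail estimate $R_k^\beta(v)\ge c\, k^{\beta-2}(v-k)^2$ (obtained by integrating the lower bound $S_k^{\beta-1}(y)\ge (\beta-1)k^{\beta-2}(y-k)$ on $[k,v]$), and hence
\[
  \frac{(S_k^\gamma(v))^{\beta/\gamma}}{R_k^\beta(v)} \le C\bigg(\frac{k}{v-k}\bigg)^{\!2-\beta/\gamma},
\]
which is bounded precisely when $2-\beta/\gamma\ge 0$, confirming that the hypothesis is sharp. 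The intermediate range $v\in[k,2k]$ is absorbed in the additive constant.

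Finally, \eqref{2.vR} follows from the same quadratic tail bound: for $\beta>1$, $R_k^\beta(v)/v\to\infty$ as $v\to\infty$, so splitting at a $\delta$-dependent threshold $v_0(\delta)$ yields $v\le \delta R_k^\beta(v)$ beyond $v_0$ and $v\le C(\delta)$ otherwise. The main technical obstacle is the bookkeeping in \eqref{2.SR}: one must track how the $k$-dependent prefactors across the three regions combine, after raising to the power $\beta/\gamma$ and dividing by $R_k^\beta$, so that the resulting constant is independent of both $k$ (for $k$ large) and $v$.
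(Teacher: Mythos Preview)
Your approach is correct and coincides with the paper's: both compute the explicit piecewise formulas for $T_k^\gamma$, $S_k^\gamma$, and $R_k^\beta$ on the three regions $\{v\le k^{-1}\}$, $\{k^{-1}\le v\le k\}$, $\{v\ge k\}$ and then verify the inequalities by elementary case analysis. In fact you give considerably more detail than the paper, which simply lists the closed-form expressions and states that the verification is left to the reader; your identification of the tail ratio $(k/(v-k))^{2-\beta/\gamma}$ as the place where the hypothesis $\gamma\ge\beta/2$ becomes sharp is a nice addition not made explicit there.
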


\begin{proof}
The inequalities can be proved by elementary computations using the explicit expressions
\begin{align*}
  T_k(v)^\alpha = k^{-\alpha}, \quad 
  S_k^\alpha(v) = \alpha k^{1-\alpha}v, \quad
  R_k^\alpha(v) = \frac12\alpha(\alpha-1)k^{2-\alpha}v^2
\end{align*} 
for $v\le 1/k$;
\begin{align*}
  T_k(v)^\alpha &= v^\alpha, \quad
  S_k^\alpha(v) = v^\alpha + (\alpha-1)k^{-\alpha}, \\
  R_k^\alpha(v) &= v^\alpha + \alpha(\alpha-2)k^{1-\alpha}v
  - \frac12(\alpha-1)(\alpha-2)k^{-\alpha}
\end{align*}
for $1/k\le v\le k$;
\begin{align*}
  T_k(v)^\alpha &= k^\alpha, \quad
  S_k^\alpha(v) = \alpha k^{\alpha-1}v - (\alpha-1)
  (k^\alpha-k^{-\alpha}), \\
  R_k^\alpha(v) &= \frac12\alpha(\alpha-1)k^{\alpha-2}v^2
  - (\alpha-2)(k^{\alpha-1}-k^{1-\alpha}) \\
  &\phantom{xx}
  - \frac12(\alpha-1)(\alpha-2)(k^\alpha+k^{-\alpha}-2k^{2-\alpha})
\end{align*}
for $v\ge k$. We leave the details to the reader. For instance, inequality \eqref{2.vR} follows from the fact that $R_k^{\beta}$ grows at least like $\min\{2,\beta\}>1$.
\end{proof}

\subsection{Uniform estimates}

We proceed by deriving some estimates uniformly in $k$. Let $(n_k,p_k,D_k,V_k)$ be a weak solution to \eqref{2.nk}--\eqref{2.JD} according to Lemma \ref{lem.exk}. We define the truncated free energy by
\begin{align*}
  H_k[n_k,p_k,D_k] = \int_\Omega\bigg(h_{n,k}(n_k)
  + h_{k,p}(p_k) + h_{k,D}(D_k) + D_k\VD 
  + \frac{\lambda^2}{2}|\na(V_k-\VD)|^2\bigg)\dx,
\end{align*}
where the approximate internal energies are given by
\begin{align*}
  h_{n,k}(n_k) &= (\alpha_n-1)^{-1}\big(R_k^{\alpha_n}(n_k)
  - \alpha_n S_k^{\alpha_n-1}(\nD)n_k\big), \\
  h_{p,k}(p_k) &= (\alpha_p-1)^{-1}\big(R_k^{\alpha_p}(p_k)
  - \alpha_p S_k^{\alpha_p-1}(\pD)p_k\big), \\
  h_{D,k}(D_k) &= (\alpha_D-1)^{-1}R_k^{\alpha_D}(D_k).
\end{align*}

\begin{lemma}[Free energy inequality with cutoff]\label{lem.eik}
There exists a constant $C>0$, depending on the initial and boundary data but not on $k$, such that for $t>0$,
\begin{align*}
  H_k[n_k(t),p_k(t),D_k(t)]
  &+ \frac12\int_0^t\int_\Omega T_k(n_k)\bigg|\na\bigg(
  \frac{\alpha_n}{\alpha_n-1}S_k^{\alpha_n-1}(n_k)-V_k\bigg)\bigg|^2
  \dx\dd s \\
  &+ \frac12\int_0^t\int_\Omega T_k(p_k)\bigg|\na\bigg(
  \frac{\alpha_p}{\alpha_p-1}S_k^{\alpha_p-1}(p_k)+V_k\bigg)\bigg|^2
  \dx\dd s \\
  &+ \int_0^t\int_\Omega T_k(D_k)\bigg|\na\bigg(
  \frac{\alpha_D}{\alpha_D-1}S_k^{\alpha_D-1}(D_k)+V_k\bigg)\bigg|^2
  \dx\dd s
  \le C.
\end{align*}
\end{lemma}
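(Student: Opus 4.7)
The strategy is to make the formal free-energy computation leading to \eqref{1.ei} rigorous at the truncated level, exploiting the chain rules \eqref{2.chain} built into the auxiliary functions $S_k^{\gamma-1}$ and $R_k^{\gamma}$. I would test \eqref{2.nk}, \eqref{2.pk}, and \eqref{2.Dk} with the entropy variables
\begin{align*}
\varphi_{n,k}&:=\tfrac{\alpha_n}{\alpha_n-1}\big(S_k^{\alpha_n-1}(n_k)-S_k^{\alpha_n-1}(\nD)\big)-(V_k-\VD),\\
\varphi_{p,k}&:=\tfrac{\alpha_p}{\alpha_p-1}\big(S_k^{\alpha_p-1}(p_k)-S_k^{\alpha_p-1}(\pD)\big)+(V_k-\VD),\\
\varphi_{D,k}&:=\tfrac{\alpha_D}{\alpha_D-1}S_k^{\alpha_D-1}(D_k)+V_k.
\end{align*}
The first two vanish on $\GD$ and therefore belong to $H^1_D(\Omega)$, while the third needs no boundary restriction since $D_k$ satisfies a Neumann-type condition on all of $\pa\Omega$.

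By the chain rule \eqref{2.chain}, the time-derivative contributions collapse to $\tfrac{\dd}{\dt}\int_\Omega h_{v,k}(v_k)\dx$ for $v=n,p,D$; for the $D$-equation, the splitting $V_k=\VD+(V_k-\VD)$ additionally produces $\tfrac{\dd}{\dt}\int_\Omega\VD D_k\dx$. The three residual terms $-\int_\Omega(V_k-\VD)\pa_t n_k$, $+\int_\Omega(V_k-\VD)\pa_t p_k$, and $+\int_\Omega(V_k-\VD)\pa_t D_k$ combine into $-\int_\Omega(V_k-\VD)(\pa_t n_k-\pa_t p_k-\pa_t D_k)\dx$, and after differentiating \eqref{2.Vk} in time and integrating by parts (all boundary contributions vanish because $V_k-\VD=0$ on $\GD$ and $\na V_k\cdot\nu=0$ on $\GN$) this becomes $\tfrac{\dd}{\dt}\tfrac{\lambda^2}{2}\int_\Omega|\na(V_k-\VD)|^2\dx$. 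Hence the sum of the tested left-hand sides is exactly $\tfrac{\dd}{\dt}H_k$. For the spatial parts, the chain rule \eqref{2.chain} rewrites the fluxes compactly as $J_{n,k}=T_k(n_k)\na(\tfrac{\alpha_n}{\alpha_n-1}S_k^{\alpha_n-1}(n_k)-V_k)$ and analogously for $p,D$; integration by parts against $\varphi_{v,k}$ then yields, for $v=D$, the full dissipation $\int_\Omega T_k(D_k)|\na(\tfrac{\alpha_D}{\alpha_D-1}S_k^{\alpha_D-1}(D_k)+V_k)|^2\dx$, and for $v=n,p$ the analogous dissipation together with a cross term against $\na\big(\tfrac{\alpha_v}{\alpha_v-1}S_k^{\alpha_v-1}(v_D)\mp\VD\big)$. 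A weighted Young inequality then absorbs half of the dissipation in the latter two cases, explaining the factors $\tfrac12$ in the $n$- and $p$-terms of the lemma and the factor $1$ in the $D$-term.

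What remains is to bound the Young residuals $\int_\Omega T_k(v_k)|\na(\tfrac{\alpha_v}{\alpha_v-1}S_k^{\alpha_v-1}(v_D)\mp\VD)|^2\dx$ uniformly in $k$ and close a Gronwall argument. The $\VD$-part is harmless because $\VD\in W^{1,\infty}$ reduces it to $\|\na\VD\|_{L^\infty}^2\int_\Omega T_k(v_k)\dx$, which via \eqref{2.vR} with a small $\delta$ is absorbed into the $R_k^{\alpha_v}$-content of $H_k$. The $S_k^{\alpha_v-1}(v_D)$-part has gradient $(\alpha_v-1)T_k(v_D)^{\alpha_v-2}\na v_D$, whose possibly singular factor $T_k(v_D)^{\alpha_v-2}$ (when $\alpha_v<2$ and $v_D$ is small) is the subtle point; here I would use Stampacchia's lemma, which gives $\na v_D=0$ a.e.\ on $\{v_D=0\}$, to conclude that the integrand is effectively supported where $v_D>0$ and then combine with $v_D\in W^{1,\infty}$ to obtain a $k$-uniform bound in terms of the boundary data. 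Putting everything together yields a differential inequality of the form $\tfrac{\dd}{\dt}H_k+\mathcal{D}_k\le C(1+H_k)$, where $\mathcal{D}_k$ denotes the total dissipation on the left-hand side; Gronwall's lemma and a final time integration then deliver the asserted uniform bound. The hardest step of the proof is exactly this uniform control of the lift-gradient in the degenerate regime $\alpha_v<2$.
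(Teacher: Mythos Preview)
Your overall architecture is exactly that of the paper: test with the entropy variables $\varphi_{v,k}$, use the chain rules \eqref{2.chain} to reconstitute $\tfrac{\dd}{\dt}H_k$, rewrite the truncated fluxes as $T_k(v_k)\na(\tfrac{\alpha_v}{\alpha_v-1}S_k^{\alpha_v-1}(v_k)\mp V_k)$, apply Young's inequality in the $n$- and $p$-equations (yielding the factor $\tfrac12$) while the $D$-equation keeps its full dissipation thanks to the pure Neumann condition, and close with $T_k(v_k)\le C R_k^{\alpha_v}(v_k)+C$ and Gronwall. This matches the paper's proof step for step.

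The one place where your write-up diverges is the bound on the Young remainder $\int_\Omega T_k(v_k)\,|\na(\tfrac{\alpha_v}{\alpha_v-1}S_k^{\alpha_v-1}(v_{\mathrm{Dir}})-\VD)|^2\dx$. The paper simply asserts that $\|\na S_k^{\alpha_v-1}(v_{\mathrm{Dir}})\|_{L^\infty(\Omega)}$ is bounded because $v_{\mathrm{Dir}}\in W^{1,\infty}$. You are right to flag this as the delicate point, but your proposed fix via Stampacchia's lemma is not enough: Stampacchia only gives $\na v_{\mathrm{Dir}}=0$ a.e.\ on $\{v_{\mathrm{Dir}}=0\}$, whereas the difficulty lives on $\{0<v_{\mathrm{Dir}}\ \text{small}\}$. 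There $\na S_k^{\alpha_v-1}(v_{\mathrm{Dir}})=(\alpha_v-1)T_k(v_{\mathrm{Dir}})^{\alpha_v-2}\na v_{\mathrm{Dir}}$, and when $\alpha_v<2$ one only has the pointwise domination $T_k(v_{\mathrm{Dir}})^{\alpha_v-2}\le v_{\mathrm{Dir}}^{\alpha_v-2}$ on $\{v_{\mathrm{Dir}}>0\}$ (since $T_k\ge v_{\mathrm{Dir}}$ there and the exponent is negative); hence a $k$-uniform $L^\infty$ bound follows \emph{if and only if} $v_{\mathrm{Dir}}^{\alpha_v-1}\in W^{1,\infty}(\Omega)$, which is genuinely stronger than $v_{\mathrm{Dir}}\in W^{1,\infty}(\Omega)$ (take $v_{\mathrm{Dir}}(x)=x$ in one dimension with $\alpha_v=3/2$). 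So to make this step rigorous you should either add the hypothesis $\nD^{\alpha_n-1},\pD^{\alpha_p-1}\in W^{1,\infty}(\Omega)$, or assume the boundary data are bounded away from zero; the paper's one-line justification glosses over the same issue.
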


\begin{proof}
The chain rule \eqref{2.chain} leads to
\begin{align*}
  \pa_t h_{n,k}(n_k) &= \frac{\alpha_n}{\alpha_n-1}
  \big\langle\pa_t n_k,
  S_k^{\alpha_n-1}(n_k)-S_k^{\alpha_n-1}(\nD)\big\rangle,
\end{align*}
and similarly for $h_{p,k}$ and $h_{D,k}$. Moreover, we have
$$
  \frac{\lambda^2}{2}\int_\Omega|\na(V-V_k)|^2\dx\Big|_0^t
  = -\int_0^t\big\langle\pa_t(n_k-p_k-D_k),V_k-\VD\big\rangle\dd s.
$$
This implies that
\begin{align*}
  H_k[n_k,p_k,D_k]\Big|_0^t 
  &= \int_0^t\frac{\dd}{\dt}H_k[n_k,p_k,D_k]\dd s \\
  &= \int_0^t\bigg\langle
  \pa_t n_k,\frac{\alpha_n}{\alpha_n-1}\big(S_k^{\alpha_n-1}(n_k)
  - S_k^{\alpha_n-1}(\nD)\big) - (V_k-\VD)\bigg\rangle\dd s \\
  &\phantom{xx}- \int_0^t\bigg\langle
  \pa_t p_k,\frac{\alpha_p}{\alpha_p-1}\big(S_k^{\alpha_p-1}(p_k)
  - S_k^{\alpha_p-1}(\pD) + (V_k-\VD)\bigg\rangle\dd s \\
  &\phantom{xx}- \int_0^t\bigg\langle
  \pa_t D_k,\frac{\alpha_D}{\alpha_D-1}S_k^{\alpha_D-1}(D_k)
  + V_k\bigg\rangle\dd s.
\end{align*}
Let us consider the first term on the right-hand side. We insert the drift--diffusion equation \eqref{2.nk} and use the chain rule \eqref{2.chain} as well as Young's inequality:
\begin{align*}
  \int_0^t&\bigg\langle
  \pa_t n_k,\frac{\alpha_n}{\alpha_n-1}\big(S_k^{\alpha_n-1}(n_k)
  - S_k^{\alpha_n-1}(\nD)\big) - (V_k-\VD)\bigg\rangle\dd s \\
  &= -\int_0^t\int_\Omega T_k(n_k)\bigg|\na\bigg(
  \frac{\alpha_n}{\alpha_n-1}S_k^{\alpha_n-1}(n_k)-V_k\bigg)\bigg|^2
  \dx\dd s \\
  &\phantom{xx}+ \int_0^t\int_\Omega T_k(n_k)\na\bigg(
  \frac{\alpha_n}{\alpha_n-1}S_k^{\alpha_n-1}(n_k)-V_k\bigg)\cdot
  \na\bigg(\frac{\alpha_n}{\alpha_n-1}
  S_k^{\alpha_n-1}(\nD)-\VD\bigg)\dx\dd s \\
  &\le -\frac12\int_0^t\int_\Omega T_k(n_k)\bigg|\na\bigg(
  \frac{\alpha_n}{\alpha_n-1}S_k^{\alpha_n-1}(n_k)-V_k\bigg)\bigg|^2
  \dx\dd s \\
  &\phantom{xx}+ \frac12\bigg\|\na\bigg(\frac{\alpha_n}{\alpha_n-1}
  S_k^{\alpha_n-1}(\nD)-\VD\bigg)\bigg\|_{L^\infty(\Omega)}^2
  \int_0^t\int_\Omega T_k(n_k)\dx\dd s.
\end{align*}
By assumption, the $W^{1,\infty}(\Omega)$ norms of $\nD$ and $\VD$ are finite, so the factor of the last integral is bounded. Treating the terms involving $\pa_t p_k$ and $\pa_t D_k$ in a similar way, we end up with
\begin{align}
  H_k[n_k,p_k,D_k,V_k]\Big|_0^t &\le
  -\frac12\int_0^t\int_\Omega T_k(n_k)\bigg|\na\bigg(
  \frac{\alpha_n}{\alpha_n-1}S_k^{\alpha_n-1}(n_k)-V_k\bigg)\bigg|^2
  \dx\dd s \label{2.Hk} \\
  &\phantom{xx}- \frac12\int_0^t\int_\Omega T_k(p_k)\bigg|\na\bigg(
  \frac{\alpha_p}{\alpha_p-1}S_k^{\alpha_p-1}(p_k)+V_k\bigg)\bigg|^2
  \dx\dd s \nonumber \\
  &\phantom{xx}- \int_0^t\int_\Omega T_k(D_k)\bigg|\na\bigg(
  \frac{\alpha_D}{\alpha_D-1}S_k^{\alpha_D-1}(D_k)+V_k\bigg)\bigg|^2
  \dx\dd s \nonumber \\
  &\le C\int_0^t\int_\Omega\big(T_k(n_k)+T_k(p_k)+T_k(D_k)\big)\dx\dd s,
  \nonumber 
\end{align}
Notice that we do not need to apply Young's inequality to the term involving $D_k$ since the no-flux boundary conditions directly allow for an integration by parts. Therefore, the dissipation term for $D_k$ has no factor $1/2$.

The right-hand side of \eqref{2.Hk} can be estimated by using Lemma \ref{lem.RST}. Indeed, we find that
$$
  T_k(n_k)\le T_k(n_k)^{\alpha_n} + C
  \le S_k^{\alpha_n}(n_k) + C \le CR_k^{\alpha_n}(n_k) + C,
$$
and similar for the other terms. This shows that
$$
  H_k[n_k,p_k,D_k,V_k]\Big|_0^t \le C\int_0^t H_k[n_k,p_k,D_k,V_k]
  \dd s + C,
$$
and Gronwall's lemma implies that $H_k[n_k,p_k,D_k,V_k](t)$ is bounded for all $t>0$. We deduce from this information that the right-hand side of \eqref{2.Hk} is bounded, thus finishing the proof.
\end{proof}

The previous lemma implies the following uniform bounds.

\begin{lemma}\label{lem.ener}
There exists $C>0$ independent of $k$ such that
\begin{align*}
  \|V_k\|_{L^\infty(0,T;H^1(\Omega))} &\le C, \\
  \sup_{0<t<T}\big(\|R_k^{\alpha_n}(n_k(t))\|_{L^1(\Omega)}
  + \|R_k^{\alpha_p}(p_k(t))\|_{L^1(\Omega)}
  + \|R_k^{\alpha_D}(D_k(t))\|_{L^1(\Omega)}\big) &\le C, \\
  \sup_{0<t<T}\big(\|T_k(n_k(t))\|_{L^{\alpha_n}(\Omega)}
  + \|T_k(p_k(t))\|_{L^{\alpha_p}(\Omega)}
  + \|T_k(D_k(t))\|_{L^{\alpha_D}(\Omega)}\big) &\le C, \\
  \big\|T_k(n_k)^{1/2}\big(\alpha_n T_k(n_k)^{\alpha_n-2}\na n_k
  - \na V_k\big)\big\|_{L^2(\Omega_T)} &\le C, \\
  \big\|T_k(p_k)^{1/2}\big(\alpha_p T_k(p_k)^{\alpha_p-2}\na p_k
  + \na V_k\big)\big\|_{L^2(\Omega_T)} &\le C, \\
  \big\|T_k(D_k)^{1/2}\big(\alpha_D T_k(D_k)^{\alpha_D-2}\na D_k
  + \na V_k\big)\big\|_{L^2(\Omega_T)} &\le C.
\end{align*}
\end{lemma}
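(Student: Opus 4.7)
The plan is to read the six bounds directly off of Lemma \ref{lem.eik}, using Lemma \ref{lem.RST} as a dictionary between the auxiliary functions $R_k^\alpha$, $S_k^\alpha$, and $T_k$. Nothing iterative is needed: all six estimates come out of a single application of the already-established cutoff free-energy inequality.

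First, for the potential bound, the free energy contains the term $\tfrac{\lambda^2}{2}\int_\Omega|\na(V_k-\VD)|^2\dx$, which Lemma \ref{lem.eik} controls uniformly in $k$ and $t$. Since $V_k-\VD=0$ on $\GD$ and $|\GD|>0$ by Hypothesis (H1), Poincaré's inequality on $\WD^{1,2}(\Omega)$ gives $\|V_k-\VD\|_{H^1(\Omega)}\le C$, and adding back $\VD\in W^{1,\infty}(\Omega)$ yields the $V_k$ estimate in $L^\infty(0,T;H^1(\Omega))$.

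Next, for the $R_k^{\alpha_v}(v_k)$ bound, I would use the explicit form of $H_k$: the energy density contains $(\alpha_v-1)^{-1}R_k^{\alpha_v}(v_k)$ with positive sign, plus cross terms of the type $\pm c_v(x)v_k$ with $c_v\in L^\infty(\Omega)$ bounded uniformly in $k$ (this uses $\nD,\pD\in W^{1,\infty}(\Omega)$, so $S_k^{\alpha_v-1}(\nD)$ and $S_k^{\alpha_v-1}(\pD)$ stabilize to $\nD^{\alpha_v-1}$, $\pD^{\alpha_v-1}$ up to a constant once $k$ is large). Applying inequality \eqref{2.vR} with $\delta$ sufficiently small absorbs all linear cross terms into fractions of $\int_\Omega R_k^{\alpha_v}(v_k)\dx$; combined with the nonnegative gradient term and $H_k\le C$, this yields the claimed $L^\infty(0,T;L^1)$ bound.

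Third, chaining \eqref{2.TS} with \eqref{2.SR} at $\gamma=\beta=\alpha_v$ (which satisfies $\gamma\ge\beta/2$) gives $T_k(v_k)^{\alpha_v}\le CR_k^{\alpha_v}(v_k)+C$, so the previous step immediately yields the $L^{\alpha_v}$ bound on $T_k(v_k)$. Finally, for the three flux estimates the chain rule \eqref{2.chain} identifies $\na\bigl(\tfrac{\alpha_n}{\alpha_n-1}S_k^{\alpha_n-1}(n_k)-V_k\bigr)=\alpha_n T_k(n_k)^{\alpha_n-2}\na n_k-\na V_k$, so the $n_k$-dissipation integral of Lemma \ref{lem.eik} is literally the square of the claimed quantity weighted by $T_k(n_k)$; similarly for $p_k$ and $D_k$.

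The only step requiring more than pure bookkeeping is the absorption of the linear cross terms of $H_k$ into the $R_k^{\alpha_v}$ integrals, and the key point there is that the prefactors $S_k^{\alpha_v-1}(\nD)$, $S_k^{\alpha_v-1}(\pD)$, $\VD$ are bounded \emph{uniformly in} $k$, so the $\delta$ in \eqref{2.vR} can be chosen $k$-independently. This is where Hypothesis (H2) on the $W^{1,\infty}$ regularity of the Dirichlet data enters; the rest of the proof is an immediate read-off.
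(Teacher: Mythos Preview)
Your approach is essentially the same as the paper's: read everything off the free-energy inequality of Lemma \ref{lem.eik}, using Lemma \ref{lem.RST} to pass between $T_k$, $S_k$, and $R_k$, and use the chain rule \eqref{2.chain} to identify the dissipation integrands with the claimed flux quantities. The paper proceeds exactly this way.

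There is one genuine but small gap in your absorption step. You invoke \eqref{2.vR} to dominate the linear cross terms $c_v(x)v_k$ by $\delta\int_\Omega R_k^{\alpha_v}(v_k)\dx+C(\delta)$, but \eqref{2.vR} is stated only for $v\ge 0$, and in this truncated scheme the approximate densities $n_k,p_k,D_k$ are \emph{not} known to be nonnegative (the paper remarks this explicitly after Lemma \ref{lem.exk}). The paper closes this gap by splitting $\Omega$ into $\{v_k\le 0\}$ and $\{v_k>0\}$: on the nonpositive set, the cross term $S_k^{\alpha_v-1}(v_{\mathrm{Dir}})\,v_k$ has a favorable sign (since $S_k^{\alpha_v-1}$ of a nonnegative argument is nonnegative), so the $R_k$ integral over that set is controlled directly; on the positive set, \eqref{2.vR} applies as you wrote. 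For the $D_k\VD$ term the coefficient $\VD$ has no sign, but on $\{D_k<0\}$ one can use that $R_k^{\alpha_D}(v)=\tfrac12\alpha_D(\alpha_D-1)k^{2-\alpha_D}v^2$ there, which for $\alpha_D\le 2$ still gives $|v|\le\delta R_k^{\alpha_D}(v)+C(\delta)$ with a $k$-independent constant. Once you add this sign discussion, your proof matches the paper's.
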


\begin{proof}
The first and the last three bounds follow directly from Lemma \ref{lem.eik} by observing that the chain rules \eqref{2.chain} give
$$
  \na\bigg(\frac{\alpha_n}{\alpha_n-1}S_k^{\alpha_n-1}(n_k)-V_k\bigg)
  = \alpha_n T_k(n_k)^{\alpha_n-2}\na n_k - \na V_k.
$$
The bounds on $R_k$ are a consequence of the definition of the approximate internal energies and Lemma \ref{lem.RST}. Indeed, by definition of $h_{k,n}(n_k)$,
$$
  \int_\Omega R_k^{\alpha_n}(n_k(t))\dx
  = (\alpha_n-1)\int_\Omega h_{k,n}(n_k(t))\dx 
  + \alpha_n\int_\Omega S_k^{\alpha_n-1}(\nD)n_k(t)\dx.
$$
For $n_k\le 0$, the last term is nonpositive so that, by Lemma \ref{lem.eik},
$$
  \int_\Omega R_k^{\alpha_n}(n_k(t))\mathrm{1}_{\{n_k\le 0\}}\dx \le C.
$$
On the other hand, for $n_k>0$, we apply \eqref{2.vR} to find, for any $\delta>0$, that
$$
  \int_\Omega R_k^{\alpha_n}(n_k(t))\mathrm{1}_{\{n_k>0\}}\dx
  \le C(\delta) + \delta C(\nD)\int_\Omega R_k^{\alpha_n}(n_k(t))
  \mathrm{1}_{\{n_k>0\}}\dx,
$$
and for sufficiently small $\delta>0$, the last term can be absorbed by the left-hand side. The remaining estimate for $T_k(n_k)$ is a consequence of the previous bound and estimate $T_k(n_k)^{\alpha_n}\le CR_k^{\alpha_n}(n_k)+C$. Similar estimates hold for $p_k$ and $D_k$.
\end{proof}

Next, we derive some gradient bounds for the approximate densities. This is the key lemma of the existence analysis.

\begin{lemma}[Gradient bounds]\label{lem.naSk}
Let $\alpha_n,\alpha_p,\alpha_D>6/5$. Then there exists $C>0$ independent of $k$ such that
\begin{align*}
  \|\na S_k^{\alpha_n-1/2}(n_k)\|_{L^2(\Omega_T)}
  + \|\na S_k^{\alpha_p-1/2}(p_k)\|_{L^2(\Omega_T)}
  + \|\na S_k^{\alpha_D-1/2}(D_k)\|_{L^2(\Omega_T)} \le C.
\end{align*}
In particular, we have a uniform bound for $S_k^{\alpha_v-1/2}(v_k)$ in $L^2(0,T;H^1(\Omega))$ for $v=n,p,D$.
\end{lemma}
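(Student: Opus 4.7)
The plan is to mimic the formal scheme sketched in Section 1.3 at the level of the regularized system, upgrading the dissipation bound of Lemma \ref{lem.ener} into a genuine $L^2(\Omega_T)$ gradient bound for $S_k^{\alpha_v - 1/2}(v_k)$. First, using the chain rule \eqref{2.chain} I would write
\begin{equation*}
\tfrac{\alpha_n}{\alpha_n - 1/2}\,\nabla S_k^{\alpha_n - 1/2}(n_k)
= T_k(n_k)^{1/2}\bigl(\alpha_n T_k(n_k)^{\alpha_n - 2}\nabla n_k - \nabla V_k\bigr) + T_k(n_k)^{1/2}\nabla V_k,
\end{equation*}
and analogously for $p_k$, $D_k$. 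Since the first term on the right is bounded in $L^2(\Omega_T)$ uniformly in $k$ by Lemma \ref{lem.ener}, the triangle inequality reduces the task to controlling $\|T_k(v_k)^{1/2}\nabla V_k\|_{L^2(\Omega_T)}$ for $v = n, p, D$.

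Second, I would estimate this remaining term by Hölder's inequality in space with conjugate exponents $6$ and $3$, producing the product $\|T_k(v_k)\|_{L^3(\Omega)}^{1/2}\,\|\nabla V_k\|_{L^3(\Omega)}$. For the density factor I would use \eqref{2.TS} to control $T_k(v_k)^{\alpha_v - 1/2}$ by $S_k^{\alpha_v - 1/2}(v_k)$ and then interpolate via the Gagliardo--Nirenberg inequality in $\R^3$ together with the uniform $L^\infty(0,T;L^{\alpha_v}(\Omega))$ bound of Lemma \ref{lem.ener}. For the potential factor I would invoke Hypothesis (H3) with $r = 3$ to bound $\|\nabla V_k\|_{L^3(\Omega)}$ by $\|n_k - p_k - D_k + A\|_{L^{3/2}(\Omega)}$, and then apply the same Gagliardo--Nirenberg step separately to each density. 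Writing $X_v(t) := \|\nabla S_k^{\alpha_v - 1/2}(v_k(t))\|_{L^2(\Omega)}$, these two interpolations reproduce exactly the Gagliardo--Nirenberg exponents appearing in \eqref{1.nan} and yield, pointwise in $t$,
\begin{equation*}
X_v(t) \le C\,Y_v(t) + C + C\sum_{w = n,p,D}\,X_v(t)^{\theta_v/(2\alpha_v - 1)}\,X_w(t)^{\widetilde\theta_w/(\alpha_w - 1/2)},
\end{equation*}
with $Y_v \in L^2(0,T)$ bounded uniformly in $k$. Squaring, integrating over $(0,T)$, and applying Young's inequality to the resulting cross products, the condition $\alpha_v > 6/5$ forces every combined exponent to be strictly less than one, so a standard absorption argument closes the estimate and yields $\|X_v\|_{L^2(0,T)} \le C$ uniformly in $k$ for each $v = n,p,D$.

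The main obstacle I expect is the three-species coupling through $\nabla V_k$: because $X_n$, $X_p$, and $X_D$ all appear on the right-hand side of each species' estimate, one cannot bound each separately but must sum the three inequalities and tune the Young exponents so that all mixed $X_v^{a}X_w^{b}$ contributions are absorbed simultaneously. A subsidiary technicality is a mild case split in the Gagliardo--Nirenberg step: for $\alpha_v \ge 3/2$ the relevant intermediate $L^q$ norm is dominated directly by Hölder from the $L^{\alpha_v}$ bound, so one can take $\widetilde\theta_v = 0$, while for $\alpha_v \in (6/5, 3/2)$ a genuine interpolation is required. Both cases must be handled uniformly so that the final bound covers the full parameter range stated in the lemma.
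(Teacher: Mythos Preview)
Your proposal is correct and follows essentially the same route as the paper's proof: the same chain-rule decomposition into the dissipation term plus $T_k(v_k)^{1/2}\nabla V_k$, the same H\"older split with exponents $(6,3)$, the same two Gagliardo--Nirenberg interpolations (one for $T_k(v_k)^{1/2}$ against the $L^{\alpha_v}$ energy bound, one for the densities in $L^{3/2}$ feeding into the elliptic estimate for $\nabla V_k$), and the same case distinction at $\alpha_v=3/2$. The only cosmetic difference is that the paper handles the three-species coupling by exploiting the monotonicity of the Gagliardo--Nirenberg exponents in $\alpha$ and reducing everything to $\alpha_0=\min\{\alpha_n,\alpha_p,\alpha_D\}$, whereas you propose to treat the cross terms $X_v^{a}X_w^{b}$ directly via Young's inequality; both variants close under the identical condition $\alpha_v>6/5$.
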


\begin{proof}
It follows from the chain rules \eqref{2.chain} and the energy estimates of Lemma \ref{lem.ener} that
\begin{align}
  \|\na &S_k^{\alpha_n-1/2}(n_k)\|_{L^2(\Omega)}
  = \bigg(\alpha_n-\frac12\bigg)\|T_k(n_k)^{\alpha_n-3/2}\na n_k\|_{L^2(\Omega)} \label{2.naSk} \\
  &\le \frac{\alpha_n-1/2}{\alpha_n}\big\|T_k(n_k)^{1/2}\big(
  \alpha_n T_k(n_k)^{\alpha_n-2}\na n_k-\na V_k\big)
  \big\|_{L^2(\Omega)} \nonumber \\
  &\phantom{xx}
  + \frac{\alpha_n-1/2}{\alpha_n}\|T_k(n_k)^{1/2}\na V_k\|_{L^2(\Omega)}
  \le C + C\|T_k(n_k)^{1/2}\|_{L^6(\Omega)}\|\na V_k\|_{L^3(\Omega)}.
  \nonumber
\end{align}
We estimate the $L^6(\Omega)$ norm of $T_k(n_k)^{1/2}$ by using \eqref{2.TS} and the Gagliardo--Nirenberg inequality:
\begin{align*}
  \|T_k(n_k)^{1/2}\|_{L^6(\Omega)}
  &= \|T_k(n_k)^{\alpha_n-1/2}\|_{L^{3/(\alpha_n-1/2)}
  (\Omega)}^{1/(2\alpha_n-1)}
  \le \|S_k^{\alpha_n-1/2}(n_k)\|_{L^{3/(\alpha_n-1/2)}
  (\Omega)}^{1/(2\alpha_n-1)} + C \\
  &\le C\|\na S_k^{\alpha_n-1/2}(n_k)\|_{L^2
  (\Omega)}^{\theta(\alpha_n)/(2\alpha_n-1)}
  \|S_k^{\alpha_n-1/2}(n_k)\|_{L^{\alpha_n/(\alpha_n-1/2)}
  (\Omega)}^{(1-\theta(\alpha_n))/(2\alpha_n-1)} + C,
\end{align*}
where 
$$
  \theta(\alpha_n) 
  = \frac{(2\alpha_n-1)(3-\alpha_n)}{5\alpha_n-3}\in[0,1]
$$
(this only requires that $1\le\alpha_n\le 3$). We deduce from \eqref{2.SR} with $\beta=\alpha_n$ and $\gamma=\alpha_n-1/2$ that
\begin{align}\label{2.estS}
  \sup_{t\in(0,T)}
  \|S_k^{\alpha_n-1/2}(n_k(t))\|_{L^{\alpha_n/(\alpha_n-1/2)}
  (\Omega)}^{\alpha_n/(\alpha_n-1/2)}
  \le C\sup_{t\in(0,T)}\int_\Omega R_k^{\alpha_n}(n_k(t))\dx + C \le C,
\end{align} 
where the last inequality follows from Lemma \ref{lem.ener}. 
This implies that
$$
  \|T_k(n_k)^{1/2}\|_{L^6(\Omega)} 
  \le C\|\na S_k^{\alpha_n-1/2}(n_k)\|_{L^2(\Omega)}^{
  \theta(\alpha_n)/(2\alpha_n-1)} + C.
$$
Similar estimates hold for $T_k(p_k)$ and $T_k(D_k)$:
\begin{align*}
  \|T_k(p_k)^{1/2}\|_{L^6(\Omega)} &\le C\|\na S_k^{\alpha_p-1/2}(p_k)\|_{L^2(\Omega)}^{
  \theta(\alpha_p)/(2\alpha_p-1)} + C, \\
  \|T_k(D_k)^{1/2}\|_{L^6(\Omega)} &\le C\|\na S_k^{\alpha_D-1/2}(D_k)\|_{L^2(\Omega)}^{
  \theta(\alpha_D)/(2\alpha_D-1)} + C.
\end{align*}
The function $\alpha\mapsto\theta(\alpha)$ is decreasing for $\alpha>1$. Hence, $\theta(\alpha_0)$ with $\alpha_0=\min\{\alpha_n,\alpha_p,\alpha_D\}$ is larger than $\theta(\alpha_v)$ for $v=n,p,D$, and collecting the previous estimates, we obtain
\begin{align}
  \|T_k&(n_k)^{1/2}\|_{L^6(\Omega)} 
  + \|T_k(p_k)^{1/2}\|_{L^6(\Omega)}
  + \|T_k(D_k)^{1/2}\|_{L^6(\Omega)}
  \le C\big(\|\na S_k^{\alpha_n-1/2}(n_k)\|_{L^2(\Omega)} 
  \label{2.T12} \\
  &+ \|\na S_k^{\alpha_p-1/2}(p_k)\|_{L^2(\Omega)}
  + \|\na S_k^{\alpha_D-1/2}(D_k)\|_{L^2(\Omega)}\big)^{
  \theta(\alpha_0)/(2\alpha_0-1)} + C. \nonumber
\end{align}

Next, we estimate the $L^3(\Omega)$ norm of $\na V_k$. We use the elliptic regularity for the Poisson equation in Hypothesis (H3) to find that
\begin{equation}\label{2.naVk}
  \|\na V_k\|_{L^3(\Omega)} \le C\|n_k-p_k-D_k+A(x)\|_{L^{3/2}(\Omega)}
  + C.
\end{equation}
If $\min\{\alpha_n,\alpha_p,\alpha_D\}\ge 3/2$, the right-hand side is uniformly bounded with respect to $k$ and time, because of the bounds in Lemma \ref{lem.ener}. Thus, let $\alpha_n<3/2$ (similar estimates hold for $\alpha_p<3/2$ or $\alpha_D<3/2$). We conclude from inequality \eqref{2.vS} with $\beta=\alpha_n-1/2<1$, the Gagliardo--Nirenberg inequality, and estimate \eqref{2.estS} that
\begin{align}\label{2.nkL32}
  \|n_k\|_{L^{3/2}(\Omega)} &\le C\|S_k^{\alpha_n-1/2}(n_k)
  \|_{L^{3/(2\alpha_n-1)}(\Omega)}^{1/(\alpha_n-1/2)} + C \\
  &\le C\|\na S_k^{\alpha_n-1/2}(n_k)
  \|_{L^2(\Omega)}^{\widetilde\theta(\alpha_n)/(\alpha_n-1/2)}
  \|S_k^{\alpha_n-1/2}(n_k)\|_{L^{\alpha_n/(\alpha-1/2)}(\Omega)}^{
  (1-\widetilde\theta(\alpha_n))/(\alpha_n-1/2)} + C \nonumber \\
  &\le C\|\na S_k^{\alpha_n-1/2}(n_k)
  \|_{L^2(\Omega)}^{\widetilde\theta(\alpha_n)/(\alpha_n-1/2)} + C,  
  \nonumber
\end{align}
where
$$
  \widetilde\theta(\alpha_n) = \frac{(2\alpha_n-1)(3-2\alpha_n)}{5\alpha_n-3}\in(0,1]
$$
is decreasing for any $\alpha_n\in(1,3/2)$.

Estimating $p_k$ and $D_k$ in a similar way, we find from \eqref{2.naVk} that 
\begin{align*}
  \|\na V_k\|_{L^3(\Omega)} 
  &\le C\sum_{v=n_k,p_k,D_k}
  \|\na S_k^{\alpha_v-1/2}(v_k)\|_{L^2(\Omega)}^{
  \widetilde\theta(\alpha_v)/(\alpha_v-1/2)} + C \\
  &\le C\sum_{v=n_k,p_k,D_k}
  \|\na S_k^{\alpha_v-1/2}(v_k)\|_{L^2(\Omega)}^{
  \widetilde\theta(\alpha_0)/(\alpha_0-1/2)} + C,
\end{align*}
where $\alpha_0=\min\{\alpha_n,\alpha_p,\alpha_D\}$. We combine this estimate and \eqref{2.T12} to infer from \eqref{2.naSk} after integration over $(0,T)$ that 
\begin{align}\label{2.TV}
  \int_0^T&\sum_{v=n_k,p_k,D_k}\|\na S_k^{\alpha_v-1/2}(v)\|_{L^2(\Omega)}^2
  \le C + \int_0^T\sum_{v=n_k,p_k,D_k}\|T_k(n_k)^{1/2}\|_{L^6(\Omega)}^2
  \|\na V_k\|_{L^3(\Omega)}^2\dt \\
  &\le C\int_0^T\sum_{v=n_k,p_k,D_k}
  \|\na S_k^{\alpha_v-1/2}(v)\|_{L^2(\Omega)}^{
  2\theta(\alpha_0)/(2\alpha_0-1)
  +2\widetilde\theta(\alpha_0)/(\alpha_0-1/2)}
  \dt + C. \nonumber
\end{align}
This yields the desired bound if the exponent on the right-hand side is smaller than two or, equivalently, if
$$
  \frac{\theta(\alpha_0)}{2\alpha_0-1} + \frac{\widetilde\theta(\alpha_0)}{
  \alpha_0-1/2} = \frac{9-5\alpha_0}{5\alpha_0-3} < 1,
$$
and this is the case if and only if $\alpha_0>6/5$, finishing the proof.
\end{proof}

We need a uniform bound for the time derivative of the approximate densities.

\begin{lemma}\label{lem.time}
Let $\alpha_n,\alpha_p,\alpha_D>6/5$. Then there exists $C>0$ independent of $k$ such that
\begin{align*}
  \|\pa_t n_k\|_{L^2(0,T;\WD^{1,\beta_n}(\Omega)')}
  + \|\pa_t p_k\|_{L^2(0,T;\WD^{1,\beta_p}(\Omega)')}
  + \|\pa_t D_k\|_{L^2(0,T;W^{1,\beta_D}(\Omega)')} &\le C,
\end{align*}
where $\beta_v=2\alpha_v/(\alpha_v+1)>1$ for $v=n,p,D$.
\end{lemma}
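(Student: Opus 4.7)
The proof is essentially bookkeeping on top of Lemma \ref{lem.ener}. The starting point is the chemical-potential structure of each flux: setting $\mu_k^n := \tfrac{\alpha_n}{\alpha_n-1}S_k^{\alpha_n-1}(n_k)-V_k$, the chain rules \eqref{2.chain} rewrite
\[
J_n^k := \alpha_n T_k(n_k)^{\alpha_n-1}\na n_k - T_k(n_k)\na V_k = T_k(n_k)\na\mu_k^n = T_k(n_k)^{1/2}\cdot\bigl[T_k(n_k)^{1/2}\na\mu_k^n\bigr].
\]
Lemma \ref{lem.ener} provides both factors uniformly in $k$: the bracketed quantity is bounded in $L^2(\Omega_T)$ (it is precisely the dissipation term in the free-energy inequality), while $T_k(n_k)^{1/2}\in L^\infty(0,T;L^{2\alpha_n}(\Omega))$ (square root of the $L^\infty_t L^{\alpha_n}_x$ bound on $T_k(n_k)$). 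Spatial H\"older with exponents $2\alpha_n$ and $2$, whose reciprocals sum to $\tfrac{\alpha_n+1}{2\alpha_n}=\tfrac{1}{\beta_n}$, then yields the uniform flux estimate $\|J_n^k\|_{L^2(0,T;L^{\beta_n}(\Omega))}\le C$. The analogous arguments give the same for $p_k$ and $D_k$ with $\beta_p$, $\beta_D$.

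The duality step is standard. I would test the weak form of \eqref{2.nk} against $\phi$ in the Sobolev space with gradient in $L^{\beta_n^*}(\Omega)$, where $\beta_n^* := 2\alpha_n/(\alpha_n-1)$ is the H\"older-conjugate of $\beta_n$. The boundary conditions \eqref{2.Nbc} combined with $\phi|_{\GD}=0$ annihilate all boundary contributions, producing $\langle\pa_t n_k,\phi\rangle = -\int_\Omega J_n^k\cdot\na\phi\,\dx$, which H\"older bounds by $\|J_n^k\|_{L^{\beta_n}(\Omega)}\|\na\phi\|_{L^{\beta_n^*}(\Omega)}$. Squaring and integrating in $t$, together with the flux estimate above, yields the desired $L^2(0,T;\WD^{1,\beta_n}(\Omega)')$-bound on $\pa_t n_k$. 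The case of $p_k$ is identical, and for $D_k$ the test functions are taken from $W^{1,\beta_D^*}(\Omega)$ with no Dirichlet constraint, which is permitted by the fully Neumann boundary \eqref{2.JD}.

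There is no substantive obstacle here, since all the analytic heavy lifting already resides in Lemma \ref{lem.ener} (and implicitly in Lemma \ref{lem.naSk} through the gradient structure of $\mu_k^n$). The only detail warranting attention is the correct matching of boundary traces in the integration by parts with the mixed boundary data; this is transparent once $J_n^k\cdot\nu=0$ on $\GN$ from \eqref{2.Nbc} and $\phi|_{\GD}=0$ are both recognized as exactly what is needed to eliminate the boundary term.
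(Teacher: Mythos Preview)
Your argument is correct and in fact cleaner than the paper's. The paper proves Lemma~\ref{lem.time} by splitting the flux into its diffusion and drift parts,
\[
J_n^k = \underbrace{\alpha_n T_k(n_k)^{\alpha_n-1}\na n_k}_{=\,T_k(n_k)^{1/2}\cdot\frac{\alpha_n}{\alpha_n-1/2}\na S_k^{\alpha_n-1/2}(n_k)} \;-\; \underbrace{T_k(n_k)\na V_k}_{=\,T_k(n_k)^{1/2}\cdot T_k(n_k)^{1/2}\na V_k},
\]
and then bounds each piece separately: the diffusion piece via the gradient estimate for $S_k^{\alpha_n-1/2}$ from Lemma~\ref{lem.naSk}, and the drift piece via the intermediate inequality \eqref{2.TV} from the same proof. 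Both steps genuinely use the restriction $\alpha_v>6/5$.

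Your route keeps the flux whole and writes $J_n^k = T_k(n_k)^{1/2}\cdot\bigl[T_k(n_k)^{1/2}\na\mu_k^n\bigr]$, where the bracketed factor is exactly the entropy-dissipation quantity already bounded in $L^2(\Omega_T)$ by Lemma~\ref{lem.ener}. This bypasses Lemma~\ref{lem.naSk} entirely (so your parenthetical remark that it is ``implicitly'' needed is overcautious---it is not), and it delivers the $L^2_tL^{\beta_n}_x$ flux bound for any $\alpha_v>1$, not just $\alpha_v>6/5$. Interestingly, the paper itself uses precisely your factorization later, in the limit passage after Lemma~\ref{lem.grad}, to identify the regularity $J_v\in L^2(0,T;L^{2\alpha_v/(\alpha_v+1)}(\Omega))$; it simply did not exploit it here. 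Your duality step and the handling of the boundary conditions are correct; note that what you call $\beta_n^*=2\alpha_n/(\alpha_n-1)$ is the H\"older conjugate $\beta_n'$, so the test functions live in $\WD^{1,\beta_n'}(\Omega)$, matching the statement.
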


\begin{proof}
Because of
$$
  \|\pa_t n_k\|_{L^2(0,T;\WD^{1,\beta_n}(\Omega)')}
  \le \alpha_n\|T_k(n_k)^{\alpha_n-1}\na n_k
  \|_{L^2(0,T;L^{\beta_n}(\Omega))}
  + \|T_k(n_k)\na V_k\|_{L^2(0,T;L^{\beta_n}(\Omega))},
$$
we only need to estimate the two terms on the right-hand side.
We know from \eqref{2.TV} in the proof of Lemma \ref{lem.naSk} that
\begin{align*}
  \|T_k&(n_k)^{1/2}\na V_k\|_{L^2(\Omega_T)}^2
  \le \int_0^T\|T_k(n_k)^{1/2}\|_{L^6(\Omega)}^2
  \|\na V_k\|_{L^3(\Omega)}^2\dt \\
  &\le C\int_0^T\sum_{v=n_k,p_k,D_k}\|\na S_k(v)^{\alpha_v-1/2}\|_{L^2(\Omega)}^2\dt + C \le C,
\end{align*}
and similarly for the terms involving $T_k(p_k)$ and $T_k(D_k)$. The diffusion term in \eqref{2.nk} is written as
$$
  \alpha_n T_k(n_k)^{\alpha_n-1}\na n_k
  = \na S_k^{\alpha_n}(n_k)
  = \frac{\alpha_n}{\alpha_n-1/2}T_k(n_k)^{1/2}\na S_k^{\alpha_n-1/2}
  (n_k).
$$
Then the bounds for $T_k(n_k)^{1/2}$ in $L^\infty(0,T;L^{2\alpha_n}(\Omega))$ from Lemma \ref{lem.ener} and for $\na S_k^{\alpha_n-1/2}(n_k)$ in $L^2(\Omega_T)$ from Lemma \ref{lem.naSk} imply that the diffusion term is uniformly bounded in $L^2(0,T;L^{\beta_n}(\Omega))$ with $\beta_n=2\alpha_n/(\alpha_n+1)$. Hence, $(\pa_t n_k)$ is bounded in $L^2(0,T;\WD^{1,\beta_n}(\Omega)')$. Again, the proof for $\pa_t p_k$ and $\pa_t D_k$ is similar, noting that the no-flux boundary conditions for $D_k$ yield a slightly different space.
\end{proof}

Next, we prove bounds for the gradients of the approximate densities without cutoff.

\begin{lemma}\label{lem.grad}
There exists $C>0$ independent of $k$ such that for $v=n_k,p_k,D_k$,
\begin{align*}
  \|v\|_{L^2(0,T;W^{1,2\alpha_v/(3-\alpha_v)}(\Omega))}
  \le C &\quad\mbox{if }6/5<\alpha_v\le 3/2, \\
  \|v\|_{L^2(0,T;W^{1,\alpha_n}(\Omega))} \le C
  &\quad\mbox{if }3/2<\alpha_v\le 2.
\end{align*}
Since $2\alpha_v/(3-\alpha_v)>\alpha_v$ for $\alpha_v>1$, we have a uniform bound for $v$ in $L^2(0,T;W^{1,\alpha_v}(\Omega))$ for all $6/5<\alpha_v\le 2$.
\end{lemma}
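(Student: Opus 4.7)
The plan is to convert the weighted gradient bound from Lemma~\ref{lem.naSk} into a genuine Sobolev bound on $v_k$ itself, using the chain-rule identity from \eqref{2.chain}:
$$\na v_k = \frac{1}{\alpha_v - 1/2}\,T_k(v_k)^{3/2 - \alpha_v}\,\na S_k^{\alpha_v - 1/2}(v_k).$$
This presents $\na v_k$ as a weight $T_k(v_k)^{3/2 - \alpha_v}$ (controlled via the uniform $L^\infty(0,T;L^{\alpha_v}(\Omega))$ bound on $T_k(v_k)$ from Lemma~\ref{lem.ener}) times a factor that is uniformly bounded in $L^2(\Omega_T)$ by Lemma~\ref{lem.naSk}. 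The two cases in the statement reflect whether the exponent $3/2 - \alpha_v$ is nonnegative or negative.

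For the subcritical range $6/5 < \alpha_v \leq 3/2$ the weight has nonnegative exponent. I would apply H\"older's inequality in space with reciprocal exponents matched so that the weight falls into $L^{\alpha_v/(3/2-\alpha_v)}$ and the gradient factor into $L^2$. The constraint $1/q = (3/2 - \alpha_v)/\alpha_v + 1/2$ forces $q = 2\alpha_v/(3 - \alpha_v)$, and one obtains the pointwise-in-time bound
$$\|\na v_k\|_{L^q(\Omega)} \leq C\,\|T_k(v_k)\|_{L^{\alpha_v}(\Omega)}^{3/2 - \alpha_v}\,\|\na S_k^{\alpha_v - 1/2}(v_k)\|_{L^2(\Omega)}.$$
Squaring, integrating over $(0,T)$, pulling out the time-supremum of $\|T_k(v_k)\|_{L^{\alpha_v}}$, and invoking Lemma~\ref{lem.naSk} close the $L^2(0,T;L^q)$ estimate on $\na v_k$. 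Upgrading to $W^{1,q}$ requires a corresponding $L^q$ control on $v_k$ itself: Poincar\'e applied to $n_k - \nD$ and $p_k - \pD$ in $\WD^{1,q}(\Omega)$ handles the Dirichlet species, while for $D_k$ one combines the gradient bound with the $L^\infty(0,T;L^{\alpha_v}(\Omega))$ control on $T_k(D_k)$ from Lemma~\ref{lem.ener}. Since $2\alpha_v/(3-\alpha_v) > \alpha_v$ for $\alpha_v > 1$, this automatically implies the $W^{1,\alpha_v}$ bound.

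For the supercritical range $3/2 < \alpha_v \leq 2$ the exponent $3/2 - \alpha_v$ turns negative, so the weight blows up where $T_k(v_k)$ approaches $k^{-1}$ and the direct H\"older pairing from the first case is no longer available. I would instead use H\"older with conjugate exponents $2/\alpha_v$ and $2/(2 - \alpha_v)$ on
$$\int_\Omega |\na v_k|^{\alpha_v}\,\dx = C \int_\Omega \bigl(T_k(v_k)^{\alpha_v - 3/2}|\na v_k|\bigr)^{\alpha_v}\,T_k(v_k)^{\alpha_v(3/2 - \alpha_v)}\,\dx,$$
decomposing the negative-exponent weight across the subdomains $\{T_k(v_k)\geq 1\}$, where the weight is bounded by $1$ and the estimate follows directly from Lemmas~\ref{lem.ener} and~\ref{lem.naSk}, and $\{T_k(v_k)<1\}$, where $v_k$ is essentially bounded by $1$ and the $L^{\alpha_v}$ mass of $T_k(v_k)$ on the small set compensates the negative-exponent integrand; reassembling the two pieces yields the desired $L^{\alpha_v}$ bound on $\na v_k$, from which the full $W^{1,\alpha_v}$ norm is obtained by the same argument as in the first case. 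The main obstacle is precisely this supercritical range: the negative weight exponent makes the direct H\"older approach unstable, and the region splitting has to be executed carefully so that the contribution from $\{T_k(v_k)<1\}$ remains bounded independently of the truncation parameter $k$, rather than deteriorating like a negative power of $k^{-1}$.
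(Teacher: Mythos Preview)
Your treatment of the subcritical range $6/5<\alpha_v\le 3/2$ is correct and coincides with the paper's argument: the chain rule writes $\na v_k$ as $T_k(v_k)^{3/2-\alpha_v}$ times $\na S_k^{\alpha_v-1/2}(v_k)$, and since the exponent on the weight is nonnegative, H\"older with the $L^\infty(0,T;L^{\alpha_v})$ bound on $T_k(v_k)$ and the $L^2$ gradient bound closes exactly as you describe.

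The supercritical case $3/2<\alpha_v\le 2$, however, has a genuine gap. Your proposed splitting into $\{T_k(v_k)\ge 1\}$ and $\{T_k(v_k)<1\}$ does not control the small-value region uniformly in $k$. On the set $\{v_k\le k^{-1}\}$ one has $T_k(v_k)=k^{-1}$, so the weight $T_k(v_k)^{\alpha_v(3/2-\alpha_v)}$ equals $k^{\alpha_v(\alpha_v-3/2)}$, which diverges as $k\to\infty$. The only available control on $\na S_k^{\alpha_v-1/2}(v_k)$ is its $L^2(\Omega_T)$ norm, and restricting to this subset gives no compensating smallness. The ``$L^{\alpha_v}$ mass of $T_k(v_k)$'' you invoke is indeed small there, but it never appears in the H\"older pairing you set up: the negative-exponent weight is multiplied by the gradient factor, not by a positive power of $T_k(v_k)$. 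There is no mechanism to absorb the factor $k^{\alpha_v(\alpha_v-3/2)}$.

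The paper circumvents this by not relying on the $S_k^{\alpha_v-1/2}$ bound at all in this range. Instead it tests the equation with the logarithmic function $S_k^0(v_k)-S_k^0(v_{\mathrm{Dir}})$ to produce a \emph{new} gradient estimate, namely $\na S_k^{\alpha_v/2}(v_k)\in L^2(\Omega_T)$. The point is that the identity $\na v_k=(2/\alpha_v)T_k(v_k)^{1-\alpha_v/2}\na S_k^{\alpha_v/2}(v_k)$ now carries a weight with \emph{nonnegative} exponent $1-\alpha_v/2\ge 0$ (this is precisely where $\alpha_v\le 2$ enters), so the same H\"older argument as in the first case applies and yields $\na v_k\in L^2(0,T;L^{\alpha_v}(\Omega))$. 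The drift term arising in this test-function computation is handled using the already-established $L^2(0,T;L^3(\Omega))$ bound on $\na V_k$. In short, the missing idea is that for $\alpha_v>3/2$ one must first manufacture a gradient bound at the lower power $\alpha_v/2$ via an energy estimate, rather than try to squeeze the result out of the $\alpha_v-1/2$ bound.
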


\begin{proof}
Let first $6/5<\alpha_n\le 3/2$. Then, by the chain rule \eqref{2.chain},
$$
  (\alpha_n-1/2)\na n_k = T_k(n_k)^{3/2-\alpha_n}
  \na S_k^{\alpha_n-1/2}(n_k).
$$
The first factor on the right-hand side is bounded in $L^\infty(0,T;L^{\alpha_n/(3/2-\alpha_n)})$ (see Lemma \ref{lem.ener}), while the second factor is bounded in $L^2(0,T;L^2(\Omega))$ (see Lemma \ref{lem.naSk}). Thus, the product, and consequently $(\na n_k)$, is bounded in $L^2(0,T;L^{2\alpha_n/(3-\alpha_n)}(\Omega))$. It follows from the Poincar\'e inequality that $(n_k)$ is bounded in $L^2(0,T;W^{1,2\alpha_n/(3-\alpha_n)}(\Omega))$.

Second, let $\alpha_n>3/2$. We use $S_k^0(n_k)-S_k^0(\nD)$ as a test function in the weak formulation of \eqref{2.nk}. By the chain rule \eqref{2.chain},
\begin{align}\label{2.aux}
  \int_\Omega\big(&R_k^1(n_k(T))-n_k(T) S_k^0(\nD)\big)\dx
  - \int_\Omega\big(R_k^1(n_k(0))-n_k(0) S_k^0(\nD)\big)\dx \\
  &= \int_0^T\int_\Omega\big(\na S_k^{\alpha_n}(n_k) - T_k(n_k)\na V_k\big)\cdot\na (S_k^0(n_k)-S_k^0(\nD))\dx\dt. \nonumber
\end{align}
Again by the chain rule \eqref{2.chain}, we have
$\na S_k^0(n_k) = \na n_k/T_k(n_k)$ and 
$$
  \na S_k^{\alpha_n}(n_k)\cdot\na S_k^0(n_k)
  = \alpha_n T_k(n_k)^{\alpha_n-2}|\na n_k|^2
  = \frac{4}{\alpha_n}|\na S_k^{\alpha_n/2}(n_k)|^2,
$$
and we conclude from \eqref{2.aux} and similar arguments as in the proof of Lemma \ref{lem.ener} that
$$
   \int_\Omega R_k^1(n_k(T))\dx
   \le C - \frac{4}{\alpha_n}\int_0^T\int_\Omega
   |\na S_k^{\alpha_n/2}(n_k)|^2\dx\dt 
   - \int_0^T\int_\Omega \na V_k\cdot\na n_k\dx\dt.
$$
It remains to estimate the last term. Writing $\na n_k = (2/\alpha_n)
T_k(n_k)^{1-\alpha_n/2}\na S_k^{\alpha_n/2}(n_k)$, we obtain from H\"older's inequality
\begin{align*}
  -\int_0^T&\int_\Omega \na V_k\cdot\na n_k\dx\dt
  \le C\int_0^T\|\na V_k\|_{L^3(\Omega)}
  \|T_k(n_k)^{1-\alpha_n/2}\|_{L^6(\Omega)}
  \|\na S_k^{\alpha_n/2}(n_k)\|_{L^2(\Omega)}\dt \\
  &\le C\|T_k(n_k)^{1-\alpha_n/2}\|_{L^\infty(0,T;L^6(\Omega))}
  \|\na V_k\|_{L^2(0,T;L^3(\Omega))}
  \|\na S_k^{\alpha_n/2}(n_k)\|_{L^2(0,T;L^2(\Omega))}.
\end{align*}
We know that the $L^\infty(0,T;L^{\alpha_n}(\Omega))$ norm of $T_k(n_k)$ is uniformly bounded. Hence, since $\alpha_n>3/2$ implies that $6(1-\alpha_n/2)<\alpha_n$, the $L^\infty(0,T;L^6(\Omega))$ norm of $(T_k(n_k)^{1-\alpha_n/2})$ is bounded too. Here, we need the assumption $\alpha_n\le 2$ to ensure that $1-\alpha_n/2\ge 0$. Furthermore, the $L^2(0,T;L^3(\Omega))$ norm of $\na V_k$ is bounded by the $L^2(0,T;L^{3/2}(\Omega))$ norms of $n_k$, $p_k$, and $D_k$, which are bounded in view of estimate \eqref{2.nkL32}. We infer that
$$
  -\int_0^T\int_\Omega \na V_k\cdot\na n_k\dx\dt
  \le C\|\na S_k^{\alpha_n/2}(n_k)\|_{L^2(0,T;L^2(\Omega))}
$$
and eventually
\begin{align*}
  \int_\Omega R_k^1(n_k(T))\dx
  \le C - \frac{4}{\alpha_n^2}\|\na S_k^{\alpha_n/2}(n_k)\|_{
  L^2(\Omega_T)}^2
  +  C\|\na S_k^{\alpha_n/2}(n_k)\|_{L^2(\Omega_T)}.
\end{align*}
Since the quadratic term dominates the linear one, we obtain a uniform bound for $\na S_k^{\alpha_n/2}(n_k)$ in $L^2(\Omega_T)$. Then
\begin{align*}
  \|\na n_k\|_{L^2(0,T;L^{\alpha_n}(\Omega))}
  &= \frac{2}{\alpha_n}\int_0^T\|T_k(n_k)^{1-\alpha_n/2}\|_{
  L^{2\alpha_n/(2-\alpha_n)}(\Omega)}
  \|\na S_k^{\alpha_n/2}(n_k)\|_{L^2(\Omega)}\dt \\
  &\le \frac{2}{\alpha_n}\|T_k(n_k)\|_{
  L^\infty(0,T;L^{\alpha_n}(\Omega))}^{1-\alpha_n/2}
  \|\na S_k^{\alpha_n/2}(n_k)\|_{L^1(0,T;L^2(\Omega))},
\end{align*}
and we conclude by observing that $T_k(n_k)$ is bounded in $L^\infty(0,T;L^{\alpha_n}(\Omega))$ by Lemma \ref{lem.ener}. The arguments for $p_k$ and $D_k$ are analogous.
\end{proof}


\subsection{Limit $k\to\infty$}

In view of Lemmas \ref{lem.time} and \ref{lem.grad} and the compact embedding $W^{1,\alpha_n}(\Omega)\hookrightarrow L^q(\Omega)$ for $1\le q<3\alpha_n/(3-\alpha_n)\in(2,6]$ for $6/5<\alpha_n\le 2$, we can apply the Aubin--Lions lemma to infer the existence of a subsequence that is not relabeled such that
\begin{align*}
  n_k\to n\quad\mbox{strongly in }L^{q}(\Omega_T)\mbox{ as }
  k\to\infty.
\end{align*}
In particular, $T_k(n_k)\to n$ a.e.\ in $\Omega_T$ and $T_k(n_k)\ge 1/k$ imply that $n\ge 0$ in $\Omega_T$. The choice $q=\alpha_n$ is admissible and leads to
$$
  |n_k|^{\alpha_n-1}\to n^{\alpha_n-1}\quad\mbox{strongly in }
  L^{\alpha_n/(\alpha_n-1)}(\Omega_T).
$$
The uniform bounds in Lemma \ref{lem.time} and \ref{lem.grad} show that, up to subsequences,
\begin{align*}
  \pa_t n_k\rightharpoonup\pa_t n &\quad\mbox{weakly in }
  L^2(0,T;\WD^{1,\beta_n}(\Omega)'), \\
  \na n_k\rightharpoonup\na n &\quad\mbox{weakly in }
  L^2(0,T;L^{\alpha_n}(\Omega)),
\end{align*}
recalling that $\beta_n=2\alpha_n/(\alpha_n+1)$. Moreover, it follows from the $L^\infty(0,T;H^1(\Omega))$ bound on $V_k$ in Lemma \ref{lem.ener} that, up to a subsequence,
$$
  \na V_k\rightharpoonup\na V\quad\mbox{weakly in }L^2(\Omega_T).
$$
These convergences imply that
\begin{align*}
  \alpha_n T_k(n_k)^{\alpha_n-1}\na n_k
  \rightharpoonup \alpha_n n^{\alpha_n-1}\na n = \na n^{\alpha_n}
  &\quad\mbox{weakly in }L^1(\Omega_T), \\
  n_k\na V_k\rightharpoonup n\na V
  &\quad\mbox{weakly in }L^1(\Omega_T).
\end{align*}

We know from the energy estimate in Lemma \ref{lem.ener} that
$(T_k(n_k)^{1/2})$ is bounded in $L^\infty(0,T;$ $L^{2\alpha_n}(\Omega))$ and
$(T_k(n_k)^{1/2}(\alpha_n T_k(n_k)^{\alpha_n-2}\na n_k-\na V_k))$ is bounded in $L^2(\Omega_T)$. Consequently, its product
$$
  \alpha_n T_k(n_k)^{\alpha_n-1}\na n_k - T_k(n_k)\na V_k
$$
is uniformly bounded in $L^2(0,T;L^{2\alpha_n/(\alpha_n+1)}(\Omega))$. We can identify the weak limit with $J_n=\na n^{\alpha_n}-n\na V$, showing that $J_n\in L^2(0,T;L^{2\alpha_n/(\alpha_n+1)}(\Omega))$.
The convergences for $p_k$ and $D_k$ are proved in a similar way.
This shows the existence statement in Theorem \ref{thm.ex}. 

\begin{remark}\label{rem.D}\rm
We may allow for the case $\alpha_n,\alpha_p>1$ and $\alpha_D=1$. Consider first the two-dimensional case. Hypothesis (H3) on elliptic regularity can be replaced by
\begin{equation}\label{2.remV}
  \|\na V\|_{L^3(\Omega)} \le C\|n-p-D+A\|_{L^{6/5}(\Omega)} + C.
\end{equation}
To avoid too many technicalities, we consider the original system and compute formally. The free energy gives a priori bounds for $D\log D$ in $L^\infty(0,T;L^1(\Omega))$ and $L^2(0,T;H^1(\Omega))$ as well as for $\sqrt{D}\na(\log D+V)$ in $L^2(\Omega_T)$. We use $D$ as a test function in \eqref{1.D} to find that
\begin{align}\label{2.remD}
  \frac12\int_\Omega &D(t)^2\dx + \int_0^t\int_\Omega|\na D|^2\dx\dd s
  \le C - \int_0^t\int_\Omega D\na V\cdot\na D\dx\dd s \\
  &\le C + C\int_0^t\|D\|_{L^6(\Omega)}^2\|\na V\|_{L^3(\Omega)}^2\dd s
  + \frac12\int_0^t\|\na D\|_{L^2(\Omega)}^2\dd s. \nonumber
\end{align}
The Gagliardo--Nirenberg inequality of \cite{BHN94} shows that for any $\delta>0$ and $q<\infty$, there exists $C(\delta)>0$ such that
\begin{align}\label{2.GN}
  \|D\|_{L^q(\Omega)} &\le \delta\|D\|_{H^1(\Omega)}^{1-1/q}
  \|D\log D\|_{L^1(\Omega)}^{1/q} + C(\delta)\|D\|_{L^1(\Omega)}
  \le \delta\|\na D\|_{L^2(\Omega)}^{1-1/q} + C(\delta),
\end{align}
where we have used the Poincar\'e--Wirtinger inequality and the uniform bounds for $D$ in $L^\infty(0,T;L^1(\Omega))$ in the last step. Hence, with $q=6/5$,
$$
  \|\na V\|_{L^3(\Omega)} \le C(n,p) + \|D\|_{L^{6/5}(\Omega)}
  \le C(n,p) + C\|\na D\|_{L^2(\Omega)}^{1/6},
$$
where $C(n,p)>0$ is controlled by the estimates in the proof of Theorem \ref{thm.ex}. It follows from \eqref{2.remD} and \eqref{2.GN} with $q=6$ that
\begin{align*}
  \int_\Omega D(t)^2\dx + \frac12\int_0^t\int_\Omega|\na D|^2\dx\dd s
  \le C(n,p,\delta) + \delta\int_0^t\|\na D\|_{L^2(\Omega)}^{5/3}
  \|\na D\|_{L^2(\Omega)}^{1/3}\dd s. 
\end{align*}
The last term can be absorbed by the left-hand side if $\delta<1/2$. 
This gives an a priori estimate for $D$ in $L^\infty(0,T;L^2(\Omega))$ and $L^2(0,T;H^1(\Omega))$. Because of $n,p\in L^2(0,T;L^{3/2}(\Omega))$, condition \eqref{2.remV} yields
\begin{align*}
  \|\na V\|_{L^3(\Omega_T)}
  &\le C(n,p) + \int_0^T\|D\|_{L^{6/5}(\Omega)}\dt \le C(n,p)
  \quad\mbox{and} \\
  \|D\na V\|_{L^2(0,T;L^{3/2}(\Omega))} 
  &\le \|D\|_{L^\infty(0,T;L^2(\Omega))}
  \|\na V\|_{L^3(\Omega_T)} \le C
\end{align*}
and consequently uniform estimates for $\pa_t D=\diver(\na D+D\na V)$ in $L^2(0,T;W^{1,3/2}(\Omega)')$. These bounds are sufficient to conclude compactness via the Aubin--Lions lemma. 

Alternatively, we may use the approach of \cite{JJZ23} to prove the global existence of weak solutions in three space dimensions; however, the regularity becomes in this case $\sqrt{D}\in W^{1,1}(\Omega)$ instead of $D\in H^1(\Omega)$ \cite[Theorem 1.1]{JJZ23}.
\qed\end{remark}

\subsection{Additional regularity}

It remains to prove the additional regularity.

\begin{lemma}\label{lem.alpha-1}
Let $6/5<\alpha_n,\alpha_p,\alpha_D\le 2$. Then
$$
  n^{\alpha_n-1},p^{\alpha_p-1},D^{\alpha_D-1}\in L^2(0,T;H^1(\Omega)).
$$
\end{lemma}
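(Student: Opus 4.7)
My strategy is to derive, at the truncated level \eqref{2.nk}, an energy-type identity whose coercive integrand is comparable to $|\na n_k^{\alpha_n-1}|^2$, rather than the weighted quantity $T_k(n_k)|\na n_k^{\alpha_n-1}|^2 \sim |\na n_k^{\alpha_n-1/2}|^2$ that is already controlled by Lemma~\ref{lem.naSk}. To this end, the test function should have gradient proportional to $n_k^{\alpha_n-3}\na n_k$, so that combined with the diffusion coefficient $\alpha_n T_k(n_k)^{\alpha_n-1}$ it produces $\alpha_n n_k^{2\alpha_n-4}|\na n_k|^2=\alpha_n(\alpha_n-1)^{-2}|\na n_k^{\alpha_n-1}|^2$ on the bulk region $\{1/k<n_k<k\}$. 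Since the natural primitive $y^{\alpha_n-2}/(\alpha_n-2)$ is singular at $y=0$ when $\alpha_n<2$, I regularize to $\psi_\eps(y):=(\alpha_n-2)^{-1}(y+\eps)^{\alpha_n-2}$ for small $\eps>0$ and test with $\phi_\eps:=\psi_\eps(n_k)-\psi_\eps(\nD)$. At the truncated level $n_k\ge 1/k$, the function $\phi_\eps$ is bounded, lies in $L^2(0,T;H^1_D(\Omega))$, and vanishes on $\GD$.

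\noindent\textbf{Derivation.} Substituting $\phi_\eps$ into the weak form of \eqref{2.nk} and using the chain rule on the time derivative (yielding $\tfrac{\dd}{\dt}\int\Psi_\eps(n_k)\dx$ with $\Psi_\eps'=\psi_\eps$ satisfying $|\Psi_\eps(y)|\le C(y^{\alpha_n-1}+\eps^{\alpha_n-1})$), one obtains, after rearrangement,
\begin{align*}
  \alpha_n\int_0^T\!\!\int_\Omega T_k(n_k)^{\alpha_n-1}(n_k+\eps)^{\alpha_n-3}|\na n_k|^2\dx\dt
  &= -\Big[\int_\Omega\bigl(\Psi_\eps(n_k)-n_k\psi_\eps(\nD)\bigr)\dx\Big]_0^T\\
  &\quad{}+\alpha_n\int_0^T\!\!\int_\Omega T_k(n_k)^{\alpha_n-1}(\nD+\eps)^{\alpha_n-3}\na n_k\cdot\na\nD\dx\dt\\
  &\quad{}+\int_0^T\!\!\int_\Omega T_k(n_k)(n_k+\eps)^{\alpha_n-3}\na V_k\cdot\na n_k\dx\dt\\
  &\quad{}-\int_0^T\!\!\int_\Omega T_k(n_k)(\nD+\eps)^{\alpha_n-3}\na V_k\cdot\na\nD\dx\dt.
\end{align*}
Assuming $\nD$ is bounded away from zero (a standard positivity condition on Dirichlet data), the boundary contributions $\int n_k\psi_\eps(\nD)\dx$ and all $(\nD+\eps)^{\alpha_n-3}$-weighted integrals are uniformly bounded in $\eps$. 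The initial and endpoint $\Psi_\eps$ contributions are controlled by the hypothesis $n_I^{\alpha_n-1}\in L^2(\Omega)\hookrightarrow L^1(\Omega)$ together with the $L^\infty(0,T;L^{\alpha_n}(\Omega))$ bound from Lemma~\ref{lem.ener}. The cross-diffusion term is absorbed via Young's inequality into a small fraction of the coercive left-hand side.

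\noindent\textbf{Drift absorption.} The drift term is treated by decomposing its integrand as the product
\[
  \bigl[\sqrt{\alpha_n}\,T_k(n_k)^{(\alpha_n-1)/2}(n_k+\eps)^{(\alpha_n-3)/2}\na n_k\bigr]\cdot\bigl[\alpha_n^{-1/2}\,T_k(n_k)^{(3-\alpha_n)/2}(n_k+\eps)^{(\alpha_n-3)/2}\na V_k\bigr],
\]
applying Cauchy--Schwarz and Young's inequality so that the squared first factor (equal to the coercive integrand) can be partially absorbed into the left-hand side. The residual contribution features the weight $T_k(n_k)^{3-\alpha_n}(n_k+\eps)^{\alpha_n-3}$, which is uniformly bounded by $1$, since on the set $\{n_k\ge 1/k\}$ one has $T_k(n_k)\le n_k$, so $T_k(n_k)^{3-\alpha_n}(n_k+\eps)^{\alpha_n-3}\le\bigl(n_k/(n_k+\eps)\bigr)^{3-\alpha_n}\le 1$ for $\alpha_n<3$. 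Thus the residual reduces to $\alpha_n^{-1}\|\na V_k\|_{L^2(\Omega_T)}^2$, which is bounded by Lemma~\ref{lem.ener}. Collecting all estimates yields the uniform bound
\[
  \alpha_n\int_0^T\!\!\int_\Omega T_k(n_k)^{\alpha_n-1}(n_k+\eps)^{\alpha_n-3}|\na n_k|^2\dx\dt \le C
\]
independent of $k$ and $\eps$.

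\noindent\textbf{Limits and main obstacle.} Passing first $k\to\infty$ using the weak and pointwise convergences from Section~\ref{sec.ex} together with Fatou's lemma, and then $\eps\to 0$ using monotone convergence (the integrand is increasing as $\eps\searrow 0$ for $\alpha_n<3$), one gets $\int_0^T\!\int_\Omega n^{2\alpha_n-4}|\na n|^2\dx\dt<\infty$. Interpreting this as vanishing on $\{n=0\}$ (where $\na n=0$ a.e.) and as $(\alpha_n-1)^{-2}|\na n^{\alpha_n-1}|^2$ on $\{n>0\}$, we conclude $\na n^{\alpha_n-1}\in L^2(\Omega_T)$. Combined with the $L^\infty(0,T;L^{\alpha_n/(\alpha_n-1)}(\Omega))$ bound on $n^{\alpha_n-1}$ inherited from $n\in L^\infty(0,T;L^{\alpha_n}(\Omega))$, this gives $n^{\alpha_n-1}\in L^2(0,T;H^1(\Omega))$. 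The argument for $p$ is identical, and for $D$ the full Neumann condition simplifies the computation by removing the $\nD$-cross terms. For the endpoint $\alpha_n=2$ the claim $\na n\in L^2(\Omega_T)$ follows directly from Lemma~\ref{lem.grad}. The main obstacle is the $\eps\to 0$ limit, in which $\psi_\eps$ diverges on $\{n=0\}$: the key technical device is the elementary bound $T_k(n_k)^{3-\alpha_n}(n_k+\eps)^{\alpha_n-3}\le 1$ that enables uniform Young absorption of the drift into the coercive quantity.
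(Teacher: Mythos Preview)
The proposal has a genuine gap: you assert that ``at the truncated level $n_k\ge 1/k$'', but this is false. As noted in the proof of Lemma~\ref{lem.exk}, the truncation used here does \emph{not} yield nonnegativity of $n_k,p_k,D_k$; only $T_k(n_k)\in[1/k,k]$ is ensured, while $n_k$ itself may be negative. This breaks your argument in two places. First, the test function $\psi_\eps(n_k)=(\alpha_n-2)^{-1}(n_k+\eps)^{\alpha_n-2}$ is undefined (or complex-valued) where $n_k<-\eps$, so it is not an admissible test function in $L^2(0,T;H^1_D(\Omega))$. Second, your ``key technical device''---the pointwise bound $T_k(n_k)^{3-\alpha_n}(n_k+\eps)^{\alpha_n-3}\le 1$---relies on $T_k(n_k)\le n_k+\eps$, which fails wherever $n_k<1/k-\eps$; there $T_k(n_k)=1/k$ while $n_k+\eps$ may be arbitrarily small or negative, so the weight blows up and the drift absorption collapses.

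The paper avoids both difficulties by staying inside the $S_k/R_k$ calculus, which is globally defined because it is built from $T_k\ge 1/k$. Concretely, one tests \eqref{2.Dk} with $S_k^{\alpha_D-2}(D_k)$, whose gradient equals $(\alpha_D-2)T_k(D_k)^{\alpha_D-3}\na D_k$; the diffusion then yields exactly $|\na S_k^{\alpha_D-1}(D_k)|^2$, and the drift becomes $\na S_k^{\alpha_D-1}(D_k)\cdot\na V_k$, absorbed by Young against $\|\na V_k\|_{L^2}^2$ with no weight to control. No $\eps$-layer and no sign information on $D_k$ are needed. Your extra hypothesis that $\nD$ be bounded away from zero is likewise unnecessary, since $S_k^{\alpha_n-2}(\nD)$ is automatically bounded. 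A further technical point: even granting your estimate, the $k\to\infty$ passage via Fatou is delicate because the integrand $T_k(n_k)^{\alpha_n-1}(n_k+\eps)^{\alpha_n-3}|\na n_k|^2$ involves $\na n_k$, which converges only weakly; the paper sidesteps this by bounding the $L^2$ norm of the gradient of the single function $S_k^{\alpha_D-1}(D_k)$ and identifying its weak limit via pointwise convergence of $S_k^{\alpha_D-1}(D_k)\to D^{\alpha_D-1}$.
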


\begin{proof}
We prove the regularity for $D$ only. Using the test function $S_k^{\alpha_D-2}(D_k)$ in the weak formulation of \eqref{2.Dk}, setting $\alpha:=\alpha_D$, and using the chain rule \eqref{2.chain} leads to
\begin{align*}
  \frac{1}{\alpha-1}&\int_\Omega R_k^{\alpha-1}(D_k(t))\dx
  - \frac{1}{\alpha-1}\int_\Omega R_k^{\alpha-1}(D_k(0))\dx \\
  &= \frac{\alpha(2-\alpha)}{(\alpha-1)^2}
  \int_0^t\int_\Omega|\na S_k^{\alpha-1}(D_k)|^2\dx\dd s 
  - \frac{2-\alpha}{\alpha-1}\int_0^t\int_\Omega 
  \na S_k^{\alpha-1}(D_k)\cdot\na V_k\dx\dd s,
\end{align*} 
and similarly if $\alpha=2$, for which $S_k^0(D_k)$ is logarithmic.
We know already that $R_k^{\alpha-1}(D_k)$ is uniformly bounded in $L^\infty(0,T;L^1(\Omega))$. We apply Young's inequality to the second term on the right-hand side, leading eventually to
\begin{align*}
  \int_0^t\int_\Omega|\na S_k^{\alpha-1}(D_k)|^2\dx\dd s
  \le C\int_0^t\int_\Omega|\na V_k|^2\dx\dd s 
  + C\int_\Omega R_k^{\alpha-1}(D_k(t))\dx + C.
\end{align*}
Since the right-hand side is uniformly bounded, we infer that
$(\na S_k^{\alpha-1}(D_k))$ is bounded in $L^2(\Omega_T)$. The pointwise convergence of $(D_k)$ shows that $S_k^{\alpha-1}(D_k)\to D^{\alpha-1}$ pointwise a.e.\ in $\Omega_T$ as $k\to\infty$. Thus, $\na D^{\alpha-1}\in L^2(\Omega_T)$, concluding the proof.
\end{proof}


\section{Regularity of solutions}\label{sec.regul}

In this section, we prove Theorem \ref{thm.regul} in three steps. The proof of Lemma \ref{lem.naSk} shows that the densities are bounded in $L^q(0,T;L^{3/2}(\Omega))$ for $q = (5\alpha-3)/(3-2\alpha)>2$. 
First, we improve this regularity to $n,p,D\in L^\infty(0,T;L^{3/2}(\Omega))$. Then we show that $n,p,D\in L^\infty(0,T;L^{q}(\Omega))$ for any $q<\infty$ with a bound depending on $q$. Finally, we prove the final goal $n,p,D\in L^\infty(0,T;L^\infty(\Omega))$.

\begin{lemma}\label{lem.L32}
Let $\alpha_n,\alpha_p,\alpha_D>\alpha^*:=(11+\sqrt{37})/14$. Then
$$
  n,p,D\in L^\infty(0,T;L^{3/2}(\Omega)).
$$
\end{lemma}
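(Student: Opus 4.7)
The goal is to upgrade the integrability $n,p,D\in L^\infty(0,T;L^{\alpha_v}(\Omega))$ from Lemma \ref{lem.ener} to a uniform-in-time $L^{3/2}$ bound. If $\min\{\alpha_n,\alpha_p,\alpha_D\}\ge 3/2$, the conclusion is immediate since $\Omega$ is bounded. I therefore focus on the case in which at least one $\alpha_v<3/2$ and set up an iterative moment estimate whose limiting exponent will dictate the threshold $\alpha^*$.

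My plan is to work at the approximate level with $(n_k,p_k,D_k,V_k)$ from Section \ref{sec.ex}. For a fixed exponent $\gamma_m\ge \alpha_v-1$, I test \eqref{2.nk} with a regularized version of $S_k^{\gamma_m}(n_k)-S_k^{\gamma_m}(\nD)$ (so that the test function vanishes on $\GD$) and use the chain rules \eqref{2.chain} in the spirit of Lemma \ref{lem.naSk}. After integration by parts, this yields a differential inequality of the schematic form
\begin{align*}
  \frac{1}{\gamma_m+1}\frac{d}{dt}\int_\Omega n^{\gamma_m+1}\,dx
  + c_{\gamma_m,\alpha_n}\int_\Omega\bigl|\nabla n^{(\gamma_m+\alpha_n-1)/2}\bigr|^2\,dx
  \le \left|\int_\Omega\nabla n^{\gamma_m+1}\cdot\nabla V\,dx\right| + C.
\end{align*}
Rewriting $\nabla n^{\gamma_m+1}=C\,n^{(\gamma_m-\alpha_n+3)/2}\nabla n^{(\gamma_m+\alpha_n-1)/2}$, applying Hölder with exponents $(2,6,3)$, using Hypothesis (H3) with $r=3$ to control $\|\nabla V\|_{L^3}$ by $\|n-p-D+A\|_{L^{3/2}}$, and invoking Gagliardo--Nirenberg to interpolate $\|n^{(\gamma_m-\alpha_n+3)/2}\|_{L^6}$ between the dissipation norm and the $L^{\gamma_m+1}$ starting bound, part of the drift can be absorbed into the dissipation. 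After integrating in time and using Young's inequality, this yields a bound $n\in L^\infty(0,T;L^{\gamma_{m+1}+1}(\Omega))$ for a new exponent $\gamma_{m+1}>\gamma_m$ linked to $\gamma_m$ by a linear recursion $\gamma_{m+1}=a(\alpha_n)\gamma_m+b(\alpha_n)$ whose coefficients are forced by matching the Gagliardo--Nirenberg and Hölder exponents. The same argument applies to $p$ and $D$; the coupling through $V$ is handled by iterating the three species simultaneously, with the recursion rate dictated by $\alpha_0:=\min\{\alpha_n,\alpha_p,\alpha_D\}$.

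When $a(\alpha_0)<1$, iteration yields $\gamma_m\to\gamma_\infty(\alpha_0)=b(\alpha_0)/(1-a(\alpha_0))$. The requirement $\gamma_\infty(\alpha_0)+1\ge 3/2$ translates into a quadratic inequality in $\alpha_0$, whose positive root is exactly $\alpha^*=(11+\sqrt{37})/14$. I then pass to the limit $k\to\infty$ using the strong convergences established in Section \ref{sec.ex} together with Fatou's lemma; this preserves the $L^\infty(0,T;L^{\gamma_m+1})$ bound at every step, and hence the conclusion at the limit.

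I expect the main obstacle to be the careful bookkeeping of the Gagliardo--Nirenberg exponents: at each step one must verify that the interpolation parameter $\theta\in[0,1]$ is admissible and that the resulting power of $\int n^{\gamma_m+1}$ is compatible with a (possibly superlinear) Gronwall argument, which is exactly what forces the quadratic threshold. A secondary technical point is that the approximate test function $S_k^{\gamma_m}(n_k)-S_k^{\gamma_m}(\nD)$ must be shown to be admissible (in $H^1_D(\Omega)$) for the Leray--Schauder solutions $n_k$ produced in Lemma \ref{lem.exk}; the cutoff structure of $S_k^{\gamma_m}$ makes this straightforward, but it is what justifies performing the formal computation rigorously.
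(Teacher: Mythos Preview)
Your overall strategy---iterative moment estimates producing a linear recursion $\gamma_{m+1}=a\gamma_m+b$ whose fixed point must reach $1/2$---is the paper's strategy, and the threshold $\alpha^*$ does arise exactly as the root of the resulting quadratic. But the mechanism you propose for closing each step has a genuine gap.

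You write that $\|\nabla V\|_{L^3}$ is controlled, via Hypothesis~(H3), by $\|n-p-D+A\|_{L^{3/2}}$; but this $L^{3/2}$ norm is precisely the quantity you are trying to bound, so pointwise in time this is circular. The paper breaks the circularity by using a \emph{fixed, pre-established time integrability} of the potential: from Lemma~\ref{lem.grad} one has $v_k\in L^2(0,T;W^{1,2\alpha_v/(3-\alpha_v)}(\Omega))$, and Gagliardo--Nirenberg then yields $v_k\in L^{q}(0,T;L^{3/2}(\Omega))$ with the specific exponent $q=(5\alpha_0-3)/(3-2\alpha_0)>5$ for $\alpha_0=\min\alpha_v\in(6/5,3/2)$. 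By (H3) this gives $\nabla V_k\in L^{q}(0,T;L^3(\Omega))$, and this bound is an external input to the iteration, never improved. After the spatial H\"older split $(6,3,2)$ one applies H\"older \emph{in time} against this $L^{q}_tL^3_x$ norm, and the absorption condition after Young's inequality reads $r\,q/(q-1)=2$ for the exponent $r$ on the gradient factor; solving this is what produces the recursion $\gamma_{m+1}=\dfrac{21\alpha^2-35\alpha+12}{9-6\alpha}+\dfrac{\gamma_m}{3}$, with contraction rate $1/3$ independent of $\alpha$. No Gronwall argument is involved---your remark about a ``possibly superlinear Gronwall'' is a red herring, and a superlinear Gronwall would in any case only yield local-in-time bounds.

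Two smaller points. First, the test function should carry the \emph{new} exponent: one tests with $S_k^{\gamma}(v_k)$ with $\gamma=\gamma_{m+1}$ to obtain $\frac{d}{dt}\int R_k^{\gamma_{m+1}+1}(v_k)$, and the previous bound $\|v_k\|_{L^{\gamma_m+1}}\le C$ enters only through the Gagliardo--Nirenberg interpolation of the $L^6$ factor; your proposal tests with $S_k^{\gamma_m}$, which would reproduce the old bound rather than a new one. Second, your exponents are off by $1/2$: testing with $v^{\gamma}$ gives dissipation $|\nabla v^{(\alpha+\gamma)/2}|^2$ and the $L^6$ factor $v^{(\gamma-\alpha+2)/2}$, not $(\gamma+\alpha-1)/2$ and $(\gamma-\alpha+3)/2$.
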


\begin{proof}
Let $(n_k,p_k,D_k)$ be a weak solution to \eqref{2.nk}--\eqref{2.JD}. We focus on the estimation of $D_k$ to avoid the boundary data, but the computations for $n_k$ and $p_k$ are similar. The statement follows from the energy estimate in Lemma \ref{lem.ener} if $\alpha_D\ge 3/2$. Therefore, let $\alpha_D<3/2$. In the following, we make an iterative argument.

Set $\gamma_0:=\alpha_D-1>0$. Then, by Lemma \ref{lem.ener}, 
$\|R_k^{\gamma_0+1}(D_k)\|_{L^\infty(0,T;L^{1}(\Omega))}\le C_0$, where $C_0>0$ only depends on the initial data. Assume that 
\begin{equation}\label{3.Cm}
  \|R_k^{\gamma_m+1}(D_k)\|_{L^\infty(0,T;L^{1}(\Omega))}\le C_m
\end{equation} 
for some $C_m>0$ and $\gamma_m>\alpha_D-1$. We wish to prove that there exist $C_{m+1}>0$ and $\gamma_{m+1}>\gamma_m$ such that 
$$
  \|R_k^{\gamma_{m+1}+1}(D_k)\|_{L^\infty(0,T;L^{1}(\Omega))}\le C_{m+1}.
$$

To simplify the notation, we set $\alpha:=\alpha_D$ and $\gamma:=\gamma_{m+1}$. We use the test function $S_k^\gamma(D_k)$ in the weak formulation of \eqref{2.Dk} and apply the chain rule \eqref{2.chain}:
\begin{align*}
  \frac{1}{\gamma+1}&\frac{\dd}{\dt}\int_\Omega R_k^{\gamma+1}(D_k)\dx
  = \langle\pa_t D_k,S_k^\gamma(D_k)\rangle \\
  &= -\int_\Omega(\na S_k^{\alpha}-T_k(D_k)\na V_k)
  \cdot\na S_k^\gamma(D_k)\dx
  = -\frac{4\alpha\gamma}{(\alpha+\gamma)^2}\int_\Omega
  |\na S_k^{(\alpha+\gamma)/2}(D_k)|^2\dx \\
  &\phantom{xx}+ \frac{2\gamma}{\alpha+\gamma}
  \int_\Omega T_k(D_k)^{(\gamma-\alpha+2)/2}\na V_k\cdot
  \na S_k^{(\alpha+\gamma)/2}(D_k)\dx.
\end{align*}
It follows from H\"older's inequality that
\begin{align}\label{3.Rg}
  \frac{1}{\gamma+1}&\frac{\dd}{\dt}\int_\Omega R_k^{\gamma+1}(D_k)\dx
  + \frac{4\alpha\gamma}{(\alpha+\gamma)^2}\int_\Omega
  |\na S_k^{(\alpha+\gamma)/2}(D_k)|^2\dx \\
  &\le C\|T_k(D_k)^{(\gamma-\alpha+2)/2}\|_{L^6(\Omega)}
  \|\na V_k\|_{L^3(\Omega)}
  \|\na S_k^{(\alpha+\gamma)/2}(D_k)\|_{L^2(\Omega)}. \nonumber
\end{align}
The first factor in the right-hand side is estimated by means of the Gagliardo--Nirenberg inequality and estimate \eqref{2.TS} to switch from $T_k$ to $S_k$:
\begin{align*}
  \|T_k&(D_k)^{(\gamma-\alpha+2)/2}\|_{L^6(\Omega)}
  = \|T_k(D_k)^{(\alpha+\gamma)/2}\|_{L^{6(\gamma-\alpha+2)
  /(\alpha+\gamma)}(\Omega)}^{(\gamma-\alpha+2)/(\alpha+\gamma)} \\
  &\le C\|S_k^{(\alpha+\gamma)/2}(D_k)\|_{L^{6(\gamma-\alpha+2)
  /(\alpha+\gamma)}(\Omega)}^{(\gamma-\alpha+2)/(\alpha+\gamma)} + C \\
  &\le C\big(\|\na S_k^{(\alpha+\gamma)/2}(D_k)\|_{L^2(\Omega)}^\theta
  \|S_k^{(\alpha+\gamma)/2}(D_k)\|_{
  L^{2(\gamma_m+1)/(\alpha+\gamma)}(\Omega)}^{1-\theta} \big)^{(\gamma-\alpha+2)/(\alpha+\gamma)} + C,
\end{align*}
where
$$
  \theta = \frac{(\alpha+\gamma)(5-3\alpha+3\gamma-\gamma_m)}{
  (3\alpha+3\gamma-\gamma_m-1)(\gamma-\alpha+2)}\in[0,1]
$$
holds since $\alpha\ge 1$. We deduce from relation \eqref{2.SR} between $S_k^{(\alpha+\gamma)/2}$ and $R_k^{\gamma_m+1}$ and the recursion assumption \eqref{3.Cm} that
\begin{align*}
  \|S_k^{(\alpha+\gamma)/2}(D_k)\|_{L^{2(\gamma_m+1)/(\alpha+\gamma)}
  (\Omega)}^{(\gamma-\alpha+2)/(\alpha+\gamma)}
  &\le C\|R_k^{\gamma_m+1}(D_k)\|_{L^{1}(\Omega)}^{
  (\gamma-\alpha+2)/(2(\gamma_m+1))} + C \\
  &\le CC_m^{(\gamma-\alpha+2)/(2(\gamma_m+1))}+C =: K_m. \nonumber
\end{align*}
This yields (choosing $K_m\ge 1$ so that $K_m^{1-\theta}\le K_m$)
\begin{align}\label{3.drift1}
  \|T_k(D_k)^{(\gamma-\alpha+2)/2}\|_{L^6(\Omega)}
  \le C\big(K_m\|\na S_k^{(\alpha+\gamma)/2}(D_k)\|_{L^2(\Omega)}^{
  (5-3\alpha+3\gamma-\gamma_m)/(3\alpha+3\gamma-\gamma_m-1)} 
  + 1\big).
\end{align}

To estimate the second factor in \eqref{3.Rg}, the norm of $\na V_k$, we first apply the Gagliardo--Nirenberg inequality:
\begin{align*}
  \int_0^T\|D_k\|_{L^{3/2}(\Omega)}^{(5\alpha-3)/(3-2\alpha)}\dt
  &\le C\int_0^T\|D_k\|_{W^{1,2\alpha/(3-\alpha)}(\Omega)
  }^{\widetilde\theta(5\alpha-3)/(3-2\alpha)}\|D_k\|_{
  L^\alpha(\Omega)}^{(1-\widetilde\theta)(5\alpha-3)/(3-2\alpha)}\dt \\
  &\le \|D_k\|_{L^\infty(0,T;L^\alpha(\Omega))}^{(1-\widetilde\theta)
  (5\alpha-3)/(3-2\alpha)}
  \int_0^T\|D_k\|_{W^{1,2\alpha/(3-\alpha)}(\Omega)
    }^{\widetilde\theta(5\alpha-3)/(3-2\alpha)}\dt,
\end{align*}
where 
$$
  \widetilde\theta = \frac{6-4\alpha}{5\alpha-3}\in(0,1)
$$
holds since we assumed $1<\alpha<3/2$. By Lemma \ref{lem.grad}, the integral on the right-hand side is uniformly bounded since
$\widetilde\theta(5\alpha-3)/(3-2\alpha)=2$. We observe that we obtain in a similar way uniform bounds for $n_k$ and $p_k$ in the space $L^{(5\alpha-3)/(3-2\alpha)}(0,T;L^{3/2}(\Omega))$. Then we deduce from Hypothesis (H3) that 
\begin{equation}\label{3.drift2}
  \int_0^T\|\na V_k\|_{L^3(\Omega)}^{(5\alpha-3)/(3-2\alpha)}\dt\le C,
\end{equation}
which improves the $L^2(0,T;L^3(\Omega))$ bound for $\na V_k$ proved before, since $(5\alpha-3)/(3-2\alpha)\in(5,\infty)$ for $6/5<\alpha<3/2$.

Now, inserting estimates \eqref{3.drift1} and \eqref{3.drift2} into \eqref{3.Rg}, integrated over time, we infer that
\begin{align*}
  &\frac{1}{\gamma+1}\int_\Omega R_k^{\gamma+1}(D_k(t))\dx
  + \frac{4\alpha\gamma}{(\alpha+\gamma)^2}\int_0^t
  \|\na S_k^{(\alpha+\gamma)/2}(D_k)\|_{L^2(\Omega)}^2\dd s \\
  &\phantom{x}\le \frac{1}{\gamma+1}\int_\Omega R_k^\gamma(D_k(0))\dx
  + C\int_0^t\|\na S_k^{(\alpha+\gamma)/2}(D_k)\|_{L^2(\Omega)}
  \|\na V_k\|_{L^3(\Omega)} \nonumber \\
  &\phantom{xxx}\times\big(K_m
  \|\na S_k^{(\alpha+\gamma)/2}(D_k)\|_{L^2(\Omega)}^{
  (5-3\alpha+3\gamma-\gamma_m)/(3\alpha+3\gamma-\gamma_m-1)} 
  + 1\big)\dd s \nonumber \\
  &\phantom{x}\le C + CK_m\int_0^t\big(
  \|\na S_k^{(\alpha+\gamma)/2}(D_k)\|_{L^2(\Omega)}^r + 1\big)
  \|\na V_k\|_{L^3(\Omega)}\dd s, \nonumber 
\end{align*}
setting
$$
  r = 1 + \frac{5-3\alpha+3\gamma-\gamma_m}{3\alpha+3\gamma-\gamma_m-1}
  = \frac{4+6\gamma-2\gamma_m}{3\alpha+3\gamma-\gamma_m-1}>1.
$$
We apply H\"older's inequality with $q=(5\alpha-3)/(3-2\alpha)$ and then Young's inequality,
\begin{align*}
  &\frac{1}{\gamma+1}\int_\Omega R_k^{\gamma+1}(D_k(t))\dx
  + \frac{4\alpha\gamma}{(\alpha+\gamma)^2}\int_0^t
  \|\na S_k^{(\alpha+\gamma)/2}(D_k)\|_{L^2(\Omega)}^2\dd s \\
  &\phantom{x}\le C + CK_m\bigg(\int_0^t
  \|\na V_k\|_{L^3(\Omega)}^q\dd s\bigg)^{1/q}\bigg(\int_0^t
  \big(\|\na S_k^{(\alpha+\gamma)/2}(D_k)\|_{L^2(\Omega)}^r+1
  \big)^{q/(q-1)}\dd s\bigg)^{(q-1)/q} \\
  &\phantom{x}\le C + \frac{(\alpha+\gamma)^2}{\alpha\gamma}CK_m^2
  \int_0^t\|\na V_k\|_{L^3(\Omega)}^q\dd s
  + \frac{2\alpha\gamma}{(\alpha+\gamma)^2}\int_0^t
  \|\na S_k^{(\alpha+\gamma)/2}(D_k)\|_{L^2(\Omega)}^{rq/(q-1)}\dd s.
\end{align*}
By \eqref{3.drift2}, the second term on the right-hand side is bounded, while the last term can be absorbed by the left-hand side if $rq/(q-1) = 2$, which is equivalent to
$$
  \gamma_{m+1} = \gamma 
  = \frac{21\alpha^2 - 35\alpha + 12}{9-6\alpha} + \frac{\gamma_m}{3}.
$$
Our requirement $\gamma_{m+1}>\gamma_m$ is equivalent to 
$$
  \gamma_m < \frac{21\alpha^2-35\alpha+12}{6-4\alpha}.
$$
In particular, $\gamma_m>\alpha-1$ has to be satisfied (since $\gamma_0=\alpha-1$). This leads to the necessary condition 
$$
  \alpha-1 < \frac{21\alpha^2-35\alpha+12}{6-4\alpha},
$$
which is equivalent to $6/5<\alpha<3/2$. We define the recursive sequence of exponents for $6/5<\alpha<3/2$ by
$$
  \gamma_{m+1} = \frac{21\alpha^2-35\alpha+12}{9-6\alpha}
  + \frac{\gamma_m}{3}, \quad m\in\N, \quad \gamma_0 = \alpha-1.
$$
This recursion has the explicit solution 
$$
  \gamma_m = \frac{21\alpha^2-35\alpha+12}{6-4\alpha}
  \bigg(1-\frac{1}{3^m}\bigg) + \frac{\alpha-1}{3^m}, \quad m\in\N.
$$

We infer that
\begin{align*}
  \frac{1}{\gamma+1}\int_\Omega R_k^{\gamma+1}(D_k(t))\dx
  + \frac{2\alpha\gamma}{(\alpha+\gamma)^2}\int_0^t
  \|\na S_k^{(\alpha+\gamma)/2}(D_k)\|_{L^2(\Omega)}^2\dd s
  \le C (1 + \gamma K_m^2).
\end{align*}
Consequently,
$$
  \|R_k^{\gamma+1}(D_k(t))\|_{L^\infty(0,T;L^1(\Omega))}
  \le (\gamma+1)C(1+\gamma K_m^2) =: C_{m+1}.
$$
which is the desired bound. We deduce from Lemma \ref{lem.RST} the uniform bound
$$
  \|T_k(D_k)\|_{L^\infty(0,T;L^{\gamma_{m+1}+1}(\Omega))}^{
  \gamma_{m+1}+1}
  \le C\|R_k^{\gamma+1}(D_k)\|_{L^\infty(0,T;L^1(\Omega))}+C
  \le C(C_{m+1}+1),
$$
and the limit $k\to\infty$ leads to a uniform bound for $D$ in
$L^\infty(0,T;L^{\gamma_{m+1}+1}(\Omega))$ for all
$$
  \gamma_{m+1} < \frac{21\alpha^2-35\alpha+12}{6-4\alpha}.
$$
We wish to reach $\gamma_{m+1}+1=3/2$. Hence, we have to guarantee that
$$
  \frac12 > \frac{21\alpha^2-35\alpha+12}{6-4\alpha},
$$
which yields the restriction $\alpha>(11+\sqrt{37})/14=\alpha^*$. This finishes the proof.
\end{proof}

The bound of Lemma \ref{lem.L32} implies, by Hypothesis (H3) with $r=3$, that $(\na V_k)$ is bounded in $L^\infty(0,T;W^{1,3}(\Omega))$. This helps us to improve the regularity of the densities.

\begin{lemma}
Let $\alpha_n,\alpha_p,\alpha_D>\alpha^*$. Then
$$
  n,p,D\in L^\infty(0,T;L^{q}(\Omega))\quad\mbox{for all }1\le q<\infty.
$$
\end{lemma}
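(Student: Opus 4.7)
The plan is to extend the iterative bootstrap of Lemma \ref{lem.L32}, exploiting the crucial improvement that $\na V_k$ is now bounded \emph{uniformly in time} in $L^3(\Omega)$ (via Hypothesis (H3) with $r=3$ applied to the $L^\infty(0,T;L^{3/2}(\Omega))$ bound on $n-p-D+A$), rather than only in some $L^q(0,T;L^3(\Omega))$ with finite $q$ as in Lemma \ref{lem.L32}. I focus on $D_k$ (no Dirichlet correction is needed thanks to the no-flux boundary condition); the densities $n_k,p_k$ are handled identically by testing with $S_k^\gamma(n_k)-S_k^\gamma(\nD)$ and $S_k^\gamma(p_k)-S_k^\gamma(\pD)$, the boundary contributions being controlled through $\nD,\pD\in W^{1,\infty}(\Omega)$.

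Writing $\alpha:=\alpha_D$, I test the weak form of \eqref{2.Dk} with $S_k^\gamma(D_k)$. The chain rule \eqref{2.chain} yields
\begin{align*}
  \frac{1}{\gamma+1}\frac{\dd}{\dt}&\int_\Omega R_k^{\gamma+1}(D_k)\dx
  + \frac{4\alpha\gamma}{(\alpha+\gamma)^2}\int_\Omega\big|\na S_k^{(\alpha+\gamma)/2}(D_k)\big|^2\dx \\
  &= \frac{2\gamma}{\alpha+\gamma}\int_\Omega T_k(D_k)^{(\gamma-\alpha+2)/2}\na V_k\cdot\na S_k^{(\alpha+\gamma)/2}(D_k)\dx.
\end{align*}
H\"older with exponents $(6,3,2)$ bounds the right-hand side by $C\|T_k(D_k)^{(\gamma-\alpha+2)/2}\|_{L^6(\Omega)}\|\na V_k\|_{L^3(\Omega)}\|\na S_k^{(\alpha+\gamma)/2}(D_k)\|_{L^2(\Omega)}$, in which the middle factor is now uniformly bounded in time and can be pulled out as a constant.

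Next, I proceed inductively: assume $\|D\|_{L^\infty(0,T;L^{\gamma_m+1}(\Omega))}\le C_m$, starting from $\gamma_0+1=3/2$ supplied by Lemma \ref{lem.L32}. Gagliardo--Nirenberg in combination with \eqref{2.TS} and \eqref{2.SR} interpolates $\|T_k(D_k)^{(\gamma-\alpha+2)/2}\|_{L^6(\Omega)}$ between $\|\na S_k^{(\alpha+\gamma)/2}(D_k)\|_{L^2(\Omega)}$ and the $L^{2(\gamma_m+1)/(\alpha+\gamma)}$ norm controlled by $C_m$, with some interpolation exponent $\theta=\theta(\alpha,\gamma,\gamma_m)\in[0,1]$. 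Inserting and applying Young's inequality, the dissipation term absorbs the resulting power of $\|\na S_k^{(\alpha+\gamma)/2}(D_k)\|_{L^2(\Omega)}$ as long as that power is strictly less than $2$. Because no H\"older-in-time cost has to be paid (contrary to Lemma \ref{lem.L32}), this translates into a recursion $\gamma_{m+1}=F(\gamma_m,\alpha)$ that can be chosen strictly increasing and without finite upper fixed point, so that $\gamma_m\to\infty$. For every $q<\infty$ one thus reaches $\gamma_m+1\ge q$ in finitely many steps, giving a uniform-in-$k$ bound on $T_k(D_k)$ in $L^\infty(0,T;L^q(\Omega))$. Passing $k\to\infty$ via the convergences of Section \ref{sec.ex} and Fatou's lemma concludes $D\in L^\infty(0,T;L^q(\Omega))$.

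The main obstacle is the exponent bookkeeping in the iterative step: one must verify that the Gagliardo--Nirenberg exponent $\theta$ indeed lies in $[0,1]$ and that the Young-absorption exponent stays strictly below~$2$ for an arbitrarily large target $\gamma_{m+1}$. The delicate inequalities relating $\alpha$, $\gamma_m$ and $\gamma_{m+1}$ are precisely of the type handled in Lemma \ref{lem.L32}; the key qualitative difference that makes the present result possible is that the uniform-in-time control of $\na V_k$ removes the structural ceiling that capped the iteration at $\gamma_m+1=3/2$ there, so that the iteration can now be continued indefinitely.
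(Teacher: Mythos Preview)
Your strategy is correct and would succeed, but it is more elaborate than necessary. The paper observes that, once $\na V_k\in L^\infty(0,T;L^3(\Omega))$ is in hand, \emph{no iteration is needed}: one may interpolate directly against the basic energy norm $L^{\alpha}$ (equivalently, take $\gamma_m+1=\alpha$ in your scheme) and obtain for arbitrary $\gamma>0$ the exponent
\[
  s \;=\; 1 + \eta\,\frac{\gamma-\alpha+2}{\alpha+\gamma}
  \;=\; \frac{2(3\gamma-\alpha+3)}{2\alpha+3\gamma},
\]
which satisfies $s<2$ for every $\gamma$ as soon as $\alpha>1$. Young's inequality then absorbs the gradient term in one shot and yields the $L^\infty(0,T;L^{\gamma+1}(\Omega))$ bound for all $\gamma<\infty$ simultaneously. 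In your iterative framing the same inequality $r<2$ also reduces to $\alpha>1$ independently of $\gamma$ and $\gamma_m$, so each step of your bootstrap actually allows an \emph{unbounded} jump in $\gamma$; the recursion is therefore superfluous. What your write-up leaves implicit---that the Gagliardo--Nirenberg parameter lies in $[0,1]$ and that the absorption exponent stays strictly below $2$ for all admissible $\gamma$---is exactly the computation the paper carries out explicitly, and it is worth spelling out rather than deferring to Lemma~\ref{lem.L32}, since the arithmetic there is different (time-H\"older enters) and produced a finite ceiling.
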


\begin{proof}
We set $\alpha:=\alpha_D$ and choose an arbitrary $\gamma>0$. Computing as in the previous proof, inequality \eqref{3.Rg} (integrated over time) holds in this situation:
\begin{align}\label{3.inter3}
  \frac{1}{\gamma+1}&\int_\Omega R_k^{\gamma+1}(D_k(t))\dx
  + \frac{4\alpha\gamma}{(\alpha+\gamma)^2}\int_0^t
  \|\na S_k^{(\alpha+\gamma)/2}(D_k)\|_{L^2(\Omega)}^2\dd s \\
  &\le \frac{1}{\gamma+1}\int_\Omega R_k^{\gamma+1}(D_k(0))
  \dx \nonumber \\
  &\phantom{xx}
  + C\int_0^t\|T_k(D_k)^{(\gamma-\alpha+2)/2}\|_{L^6(\Omega)}
  \|\na V_k\|_{L^3(\Omega)}
  \|\na S_k^{(\alpha+\gamma)/2}(D_k)\|_{L^2(\Omega)}\dd s. \nonumber
\end{align}
We estimate the norm of $T_k(D_k)^{(\gamma-\alpha+2)/2}$ by using the Gagliardo--Nirenberg inequality, similarly as in the previous proof:
\begin{align*}
  \|T_k&(D_k)^{(\gamma-\alpha+2)/2}\|_{L^6(\Omega)}
  \le C\|S_k^{(\alpha+\gamma)/2}\|_{
  L^{6(\gamma-\alpha+2)/(\alpha+\gamma)}(\Omega)}^{
  (\gamma-\alpha+2)(\alpha+\gamma)} + C \\
  &\le C\big(\|\na S_k^{(\alpha+\gamma)/2}(D_k)\|_{L^2(\Omega)}^\eta
  \|S_k^{(\alpha+\gamma)/2}(D_k)\|_{L^{2\alpha/(\alpha+\gamma)}
  (\Omega)}^{1-\eta}\big)^{(\gamma-\alpha+2)/(\alpha+\gamma)}  + C,
\end{align*}
where
$$
  \eta = \frac{(\alpha+\gamma)(3\gamma-4\alpha+6)}{(\gamma-\alpha+2)
  (2\alpha+3\gamma)}\in(0,1).
$$
By the energy estimates of Lemma \ref{lem.ener} and relation \eqref{2.SR}, the $L^\infty(0,T;L^{2\alpha/(\alpha+\gamma)}(\Omega))$ norm of $S_k^{(\alpha+\gamma)/2}(D_k)$ is uniformly bounded. Hence, 
$$
  \|T_k(D_k)^{(\gamma-\alpha+2)/2}\|_{L^6(\Omega)}
  \le C\|\na S_k^{(\alpha+\gamma)/2}(D_k)\|_{L^2(\Omega)
  }^{\eta(\gamma-\alpha+2)/(\alpha+\gamma)} + C.
$$
We insert this estimate into \eqref{3.inter3} and take into account that the $L^\infty(0,T;L^3(\Omega))$ norm of $\na V_k$ is uniformly bounded:
\begin{align*}
  \frac{1}{\gamma+1}&\int_\Omega R_k^{\gamma+1}(D_k(t))\dx
  + \frac{4\alpha\gamma}{(\alpha+\gamma)^2}\int_0^t
  \|\na S_k^{(\alpha+\gamma)/2}(D_k)\|_{L^2(\Omega)}^2\dd s \\
  &\le C + C\int_0^t\|\na S_k^{(\alpha+\gamma)/2}(D_k)\|_{L^2(\Omega)}^s
  \dd s,
\end{align*}
where
$$
  s = 1 + \eta\frac{\gamma-\alpha+2}{\alpha+\gamma} 
  = 2\frac{3\gamma-\alpha+3}{2\alpha+3\gamma} < 2
$$
holds because of $\alpha>1$. Therefore, we can apply the Young inequality $ab\le \eps a^c + \eps^{-1/(c-1)}b^{c/(c-1)}$ for $a,b\ge 0$, $\eps>0$, $c>1$ with the choice $b=C$, $\eps= 2\alpha\gamma/(\alpha+\gamma)^2$, and $c=2/s>1$ to find that
\begin{align*}
  \frac{1}{\gamma+1}&\int_\Omega R_k^{\gamma+1}(D_k(t))\dx
  + \frac{4\alpha\gamma}{(\alpha+\gamma)^2}\int_0^t
  \|\na S_k^{(\alpha+\gamma)/2}(D_k)\|_{L^2(\Omega)}^2\dd s \\
  &\le C + \frac{2\alpha\gamma}{(\alpha+\gamma)^2}\int_0^t
  \|\na S_k^{(\alpha+\gamma)/2}(D_k)\|_{L^2(\Omega)}^2\dd s
  + C^{2/(2-s)}
  \bigg(\frac{2\alpha\gamma}{(\alpha+\gamma)^2}\bigg)^{-s/(2-s)}.
\end{align*}
The second term on the right-hand side can be absorbed by the left-hand side. Then, writing
$(2\alpha\gamma/(\alpha+\gamma)^2)^{-1}\le C(\gamma+1)$, where $C>0$ depends on $\alpha$,
\begin{align*}
  \int_\Omega R_k^{\gamma+1}(D_k(t))\dx
  \le C(\gamma+1) + C^{(2\alpha+3\gamma)/(3\alpha-3)}
  C(\gamma+1)^{1+(3\gamma-\alpha+3)/(3\alpha-3)}.
\end{align*}
Thus, we deduce from Lemma \ref{lem.RST} the estimate
\begin{align}\label{3.gamma}
  \|T_k(D_k)\|_{L^\infty(0,T;L^{\gamma+1}(\Omega))}
  &\le C\|R_k^{\gamma+1}(D_k)\|_{L^\infty(0,T;L^1(\Omega))
  }^{1/(\gamma+1)} + C \\
  &\le C(\gamma+1)^{1/(\gamma+1)} 
  + C(\gamma+1)^{(2\alpha+3\gamma)/((3\alpha-3)(\gamma+1))}. \nonumber
\end{align}
This provides a uniform bound for $D_k$ and, after the limit $k\to\infty$, for $D$ in $L^\infty(0,T;L^{\gamma+1}(\Omega))$ for any
$\gamma<\infty$. Unfortunately, the right-hand side of \eqref{3.gamma} diverges as $\gamma\to\infty$, and we cannot conclude a uniform bound in $L^{\infty}(\Omega)$. 
\end{proof}

Finally, we prove the last statement of Theorem \ref{thm.regul}. 
\begin{lemma}
Let $\alpha_n,\alpha_p,\alpha_D>\alpha^*$ and let Hypothesis (H3) hold with $r>3$. Then $n,p,D\in L^\infty(0,T;L^\infty(\Omega))$.
\end{lemma}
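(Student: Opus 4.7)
The approach is a Moser/Alikakos iteration, now exploiting the improvement that Hypothesis (H3) holds for some $r>3$. Combined with the $L^\infty(0,T;L^q(\Omega))$ bounds for $n,p,D$ from the previous lemma, this hypothesis yields $\na V_k\in L^\infty(0,T;L^r(\Omega))$ uniformly in $k$. The gain over the previous proof is that we may now peel off $\na V_k$ in $L^r$ with $r>3$ rather than in $L^3$; this strictness will leave, in the Gagliardo--Nirenberg step, a power of $\|\na S_k^{(\alpha+\gamma)/2}(D_k)\|_{L^2(\Omega)}$ that is uniformly (in $\gamma$) below $2$, which is what makes the iteration close. I focus on $D_k$; the analogues for $n_k,p_k$ go through after subtracting the Dirichlet data.

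Testing \eqref{2.Dk} with $S_k^\gamma(D_k)$ and applying the chain rule and H\"older with exponents $\bigl(2r/(r-2),r,2\bigr)$ on the drift, I would derive
\begin{align*}
  \frac{1}{\gamma+1}\int_\Omega R_k^{\gamma+1}(D_k(t))\dx
  &+ c\int_0^t\|\na S_k^{(\alpha+\gamma)/2}(D_k)\|_{L^2(\Omega)}^2\dd s \\
  &\le C + C\int_0^t\|T_k(D_k)^{(\gamma-\alpha+2)/2}\|_{L^{2r/(r-2)}(\Omega)}
  \|\na V_k\|_{L^r(\Omega)}
  \|\na S_k^{(\alpha+\gamma)/2}(D_k)\|_{L^2(\Omega)}\dd s.
\end{align*}
Because $r>3$ gives $2r/(r-2)<6$, a Gagliardo--Nirenberg interpolation of $S_k^{(\alpha+\gamma)/2}(D_k)$ with lower anchor at the Lebesgue exponent $2(\gamma_m+1)/(\alpha+\gamma)$ (provided by an already controlled $L^{\gamma_m+1}$ bound from the previous step) produces an exponent on $\|\na S_k^{(\alpha+\gamma)/2}(D_k)\|_{L^2(\Omega)}$ strictly below $1$. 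Young's inequality then absorbs the gradient term and leaves an Alikakos-type bound
\begin{align*}
  \|T_k(D_k)\|_{L^\infty(0,T;L^{\gamma+1}(\Omega))}^{\gamma+1}
  \le C + C(\gamma+1)^{\beta}\,
  \|T_k(D_k)\|_{L^\infty(0,T;L^{(\gamma+\alpha)/2}(\Omega))}^{\mu(\gamma)},
\end{align*}
with $C,\beta>0$ independent of $\gamma$ and $k$ and with $\mu(\gamma)/(\gamma+1)$ bounded as $\gamma\to\infty$.

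I would then set up the recursion $\gamma_{m+1}=2\gamma_m+2-\alpha$, starting from a $\gamma_0$ for which the previous lemma already supplies an $L^{\gamma_0+1}$ bound. The relation $\gamma_{m+1}+1 = 2(\gamma_m+1)+(1-\alpha)$ together with $\alpha\in(1,2]$ gives $\gamma_m+1\sim 2^m$. Writing $M_m:=\|D\|_{L^\infty(0,T;L^{\gamma_m+1}(\Omega))}$, the Alikakos inequality reads schematically
\begin{align*}
  M_{m+1}\le \bigl(C(\gamma_{m+1}+1)^{\beta}\bigr)^{1/(\gamma_{m+1}+1)}
  \max\{1,M_m\}^{\mu(\gamma_{m+1})/(\gamma_{m+1}+1)}.
\end{align*}
Since $\gamma_m+1$ grows geometrically, the series $\sum_m(\gamma_m+1)^{-1}\log\bigl(C(\gamma_m+1)^{\beta}\bigr)$ converges and $\mu(\gamma_m)/(\gamma_m+1)\to 1/2$, so the standard Moser bookkeeping yields $\sup_m M_m<\infty$. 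Passing to $m\to\infty$ gives $D\in L^\infty(0,T;L^\infty(\Omega))$ uniformly in $k$, and the analogous estimates for $n,p$ follow. The stated regularity $V\in L^\infty(0,T;W^{1,r}(\Omega))$ is then immediate from Hypothesis (H3).

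The main obstacle will be the careful tracking of $\gamma$-dependent constants in the Gagliardo--Nirenberg/Young step, so that the lower Lebesgue exponent genuinely sits at (or below) $(\gamma+\alpha)/2$ and the exponent $\mu(\gamma)/(\gamma+1)$ is bounded by a constant strictly less than $1$. This is precisely where the strict inequality $r>3$ (as opposed to $r=3$) is needed: at $r=3$ the interpolation exponent on the gradient term is not strictly sub-unity, the Young step produces a power equal to $2$, and the geometric iteration collapses. The degenerate replacement $(\gamma+1)/2\rightsquigarrow(\gamma+\alpha)/2$ in the halving, noted in the introduction, is innocuous because $\alpha\le 2$ keeps the iteration $\gamma_{m+1}=2\gamma_m+2-\alpha$ geometric.
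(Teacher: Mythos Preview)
Your approach is essentially the paper's: an Alikakos iteration in which the strict inequality $r>3$ makes the Gagliardo--Nirenberg exponent on $\|\na S_k^{(\alpha+\gamma)/2}\|_{L^2}$ uniformly sub-unital, so Young's inequality absorbs the gradient and leaves a recursion linking the $L^{\gamma+1}$ norm to the $L^{(\gamma+\alpha)/2}$ norm with polynomial-in-$\gamma$ constants. The paper works directly with the limit $D$ (formally, under pure Neumann data) and anchors the interpolation at $L^1$ rather than at the previous iterate $L^{\gamma_m+1}$; both choices lead to the same recursion $\gamma_{m+1}+1=2(\gamma_m+1)+(1-\alpha)$ and the same endgame.

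One point needs correction. Your claims that $\mu(\gamma)/(\gamma+1)\to 1/2$ and that this ratio is ``bounded by a constant strictly less than $1$'' are not right. After absorbing the gradient, the recursion reads
\[
\|D\|_{L^{\gamma_{m+1}+1}}^{\gamma_{m+1}+1}\le C(\gamma_{m+1}+1)^\beta\bigl(1+\|D\|_{L^{\gamma_m+1}}^{2(\gamma_m+1)}\bigr),
\]
so the exponent on $M_m$ in your notation is $\rho_m=2(\gamma_m+1)/(\gamma_{m+1}+1)=2(\gamma_m+1)/\bigl(2(\gamma_m+1)+1-\alpha\bigr)$, which is \emph{greater} than $1$ (since $\alpha>1$) and tends to $1$ from above. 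The iteration still closes, but for the right reason: $\sum_m(\rho_m-1)=\sum_m(\alpha-1)/\bigl(2(\gamma_m+1)+1-\alpha\bigr)$ converges because $\gamma_m+1\sim 2^m$, hence $\prod_m\rho_m<\infty$. Equivalently, as the paper does, one can set $b_m=\|D\|_{L^{\gamma_m+1}}^{\gamma_m+1}+\|D_I\|_{L^\infty}^{\gamma_m+1}+1$, obtain $b_m\le M^m b_{m-1}^2$, and solve this explicitly. The mechanism you want to track is not ``ratio below $1$'' but ``squaring of $b$ matched by geometric growth of the Lebesgue exponent.''
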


\begin{proof}
The idea of the proof is to perform an Alikakos-type iteration \cite{Ali79}. We know from Theorem \ref{thm.regul} that $n,p,D\in L^\infty(0,T;L^q(\Omega))$ for any $q<\infty$. By Hypothesis (H3) with $r=3+\eta>3$, this implies the bound
\begin{equation}\label{3.Veta}
  \|V\|_{L^\infty(0,T;W^{1,3+\eta}(\Omega))} \le C + C\|n-p-D+A\|_{L^\infty(0,T;L^q(\Omega))}
	\le C
\end{equation}
for $q=(9+3\eta)/(6+\eta)>3/2$. We shall only show the case of pure Neumann boundary conditions to simplify the presentation. Then, setting $\alpha:=\alpha_D$ and using $D^\gamma$ for some $\gamma>0$ as a test function in the weak formulation of \eqref{1.D}, we find that
\begin{align}\label{3.Dgamma}
  \frac{1}{\gamma+1}&\frac{\dd}{\dt}\|D\|_{L^{\gamma+1}(\Omega)}^{\gamma+1}
	+ \frac{4\alpha\gamma}{(\alpha+\gamma)^2}\|\na D^{(\alpha+\gamma)/2}\|_{L^2(\Omega)}^2 \\
	&= \frac{2\gamma}{\alpha+\gamma}\int_\Omega D^{(\gamma-\alpha+2)/2}\na V\cdot
	\na D^{(\alpha+\gamma)2}\dx \nonumber \\
	&\le \frac{2\gamma}{\alpha+\gamma}\|D^{(\gamma-\alpha+2)/2}\|_{L^{6-\mu}(\Omega)}
	\|\na V\|_{L^{3+\eta}(\Omega)}\|\na D^{(\alpha+\gamma)/2}\|_{L^2(\Omega)} \nonumber \\
	&\le C\|D^{(\gamma-\alpha+2)/2}\|_{L^{6-\mu}(\Omega)}
	\|\na D^{(\alpha+\gamma)/2}\|_{L^2(\Omega)}, \nonumber 
\end{align}
where $\mu=4\eta/(1+\eta)$ is determined from H\"older's inequality via $1/(6-\mu)+1/(3+\eta)+1/2=1$, and we have used the bound \eqref{3.Veta} which is uniform in $\gamma$ and $t>0$. We estimate the first factor on the right-hand side, using $(\gamma-\alpha+2)/(\alpha+\gamma)<1$:
\begin{align*}
  \|D^{(\gamma-\alpha+2)/2}\|_{L^{6-\mu}(\Omega)}
	&= \|D^{(\alpha+\gamma)/2}\|_{L^{(6-\mu)(\gamma-\alpha+2)
	/(\alpha+\gamma)}(\Omega)}^{(\gamma-\alpha+2)/(\alpha+\gamma)} \\
	&\le C(\Omega)\|D^{(\alpha+\gamma)/2}\|_{L^{6-\mu}(\Omega)}^{(\gamma-\alpha+2)
	/(\alpha+\gamma)}
	\le C\big(1+\|D^{(\alpha+\gamma)/2}\|_{L^{6-\mu}(\Omega)}\big).
\end{align*}
We deduce from the Gagliardo--Nirenberg inequality with $\theta=(30-6\mu)/(30-5\mu)\in(0,1)$ that
$$
  \|D^{(\alpha+\gamma)/2}\|_{L^{6-\mu}(\Omega)}
	\le C\|\na D^{(\alpha+\gamma)/2}\|_{L^2(\Omega)}^\theta
	\|D^{(\alpha+\gamma)/2}\|_{L^1(\Omega}^{1-\theta} + C\|D^{(\alpha+\gamma)/2}\|_{L^1(\Omega)}.
$$
Thus, the right-hand side of \eqref{3.Dgamma} can be bounded as
\begin{align}\label{3.Dgamma2}  
  \|D&^{(\gamma-\alpha+2)/2}\|_{L^{6-\mu}(\Omega)}
	\|\na D^{(\alpha+\gamma)/2}\|_{L^2(\Omega)} \\
	&\le C\big(1 + \|\na D^{(\alpha+\gamma)/2}\|_{L^2(\Omega)}^\theta
	\|D^{(\alpha+\gamma)/2}\|_{L^1(\Omega}^{1-\theta} 
	+ \|D^{(\alpha+\gamma)/2}\|_{L^1(\Omega)}\big) 
	\|\na D^{(\alpha+\gamma)/2}\|_{L^2(\Omega)} \nonumber \\
	&\le C\|\na D^{(\alpha+\gamma)/2}\|_{L^2(\Omega)}^{1+\theta}
	\big(1 + \|D^{(\alpha+\gamma)/2}\|_{L^1(\Omega}^{1-\theta}\big) \nonumber \\
	&\phantom{xx}+ C\|\na D^{(\alpha+\gamma)/2}\|_{L^2(\Omega)}
	\|D^{(\alpha+\gamma)/2}\|_{L^1(\Omega)}. \nonumber 
\end{align}

Next, we apply Young's inequality $ab\le \delta a^p + b^q/(qp^{q/p}\delta^{q/p})$ with
$$
  p = \frac{2}{1+\theta}, \quad q = \frac{2}{1-\theta}, \quad 
	\delta = \frac{2\alpha\gamma}{(\alpha+\gamma)^2}
$$
to the first term and Young's inequality $ab\le \delta a^2 + b^2/(4\delta)$ to the second term on the right-hand side of \eqref{3.Dgamma2}. Observe that $p$ and $q$ depend only on $\mu$ (and hence on $\eta$) but not on $\gamma$. At this point, we need the better regularity of $V$ in $W^{1,3+\eta}(\Omega)$ instead in $W^{1,3}(\Omega)$, since $\eta=0$ would imply that $\theta=1$ and then the norm of $\na D$ in \eqref{3.Dgamma2} is squared and cannot generally be absorbed. We need $1+\theta<2$ which requires that $\eta>0$. Then \eqref{3.Dgamma} becomes
\begin{align*}
  \frac{1}{\gamma+1}&\frac{\dd}{\dt}\|D\|_{L^{\gamma+1}(\Omega)}^{\gamma+1}
	+ \frac{4\alpha\gamma}{(\alpha+\gamma)^2}\|\na D^{(\alpha+\gamma)/2}\|_{L^2(\Omega)}^2 \\
	&\le \frac{2\alpha\gamma}{(\alpha+\gamma)^2}\|\na D^{(\alpha+\gamma)/2}\|_{L^2(\Omega)}^2
	+ \frac{C}{qp^{q/p}}\bigg(\frac{(\alpha+\gamma)^2}{2\alpha\gamma}
	\bigg)^{q/p}
	\big(1 + \|D^{(\alpha+\gamma)/2}\|_{L^1(\Omega}^{1-\theta}\big)^q \\
	&\phantom{xx}+ \frac{2\alpha\gamma}{(\alpha+\gamma)^2}
	\|\na D^{(\alpha+\gamma)/2}\|_{L^2(\Omega)}^2
	 + C\frac{(\alpha+\gamma)^2}{2\alpha\gamma}
	\|D^{(\alpha+\gamma)/2}\|_{L^1(\Omega)}^{2}.
\end{align*}
The first and third term on the right-hand side are absorbed by the left-hand side. Then
\begin{align*}
  \frac{1}{\gamma+1}\frac{\dd}{\dt}\|D(t)\|_{L^{\gamma+1}(\Omega)}^{\gamma+1}
	&\le \frac{C}{qp^{q/p}}\bigg(\frac{(\alpha+\gamma)^2}{2\alpha\gamma}\bigg)^{q/p}
	\big(1 + \|D\|_{L^{(\alpha+\gamma)/2}(\Omega)}^{(\alpha+\gamma)(1-\theta)/2}\big)^q \\
	&\phantom{xx}+ C\bigg(\frac{(\alpha+\gamma)^2}{2\alpha\gamma}\bigg)^2
	\|D\|_{L^{(\alpha+\gamma)/2}(\Omega)}^{\alpha+\gamma}.
\end{align*}
We deduce from $q/p=(1+\theta)/(1-\theta)=(60-11\mu)/\mu$ that there exists $C(\alpha)>0$ such that for $\gamma\ge 1$,
$$
  \bigg(\frac{(\alpha+\gamma)^2}{2\alpha\gamma}\bigg)^{q/p}
	\le (C(\alpha)\gamma)^{q/p} \le C(\alpha,\mu)\gamma^{(60-11\mu)/\mu}, \quad
	\bigg(\frac{(\alpha+\gamma)^2}{2\alpha\gamma}\bigg)^2 \le C(\alpha)\gamma^2.
$$
Moreover, we have $(1-\theta)q/2=1$. This shows that
\begin{align*}
  \frac{1}{\gamma+1}\frac{\dd}{\dt}\|D\|_{L^{\gamma+1}(\Omega)}^{\gamma+1}
	\le C(\alpha,\mu)(\gamma^{(60-11\mu)/\mu} + \gamma^2)
	\big(1 + \|D\|_{L^{(\alpha+\gamma)/2}(\Omega)}^{\alpha+\gamma}\big).
\end{align*}
Consequently, since $(60-11\mu)/\mu>2$ (which follows from $\mu=4\eta/(1+\eta)<4$), integrating the previous inequality over $(0,t)$ and multiplying it by $\gamma+1$, we obtain
\begin{align*}
  \|D(t)&\|_{L^{\gamma+1}(\Omega)}^{\gamma+1}
	\le \|D_I\|_{L^{\gamma+1}(\Omega)}^{\gamma+1} + C(\alpha,\mu)\gamma^\beta\int_0^t
	\big(1 + \|D\|_{L^{(\alpha+\gamma)/2}(\Omega)}^{\alpha+\gamma}\big)\dd s \\
	&\le C(\Omega)\|D_I\|_{L^\infty(\Omega)}^{\gamma+1}
	+ C(T)\gamma^\beta\big(1+\|D\|_{L^\infty(0,T;L^{(\alpha+\gamma)/2}(\Omega))
	}^{\alpha+\gamma}\big),
\end{align*}
where $\beta = (60-11\mu)/\mu+1=10(6-\mu)/\mu\in(0,\infty)$. We take the supremum over $t\in(0,T)$:
\begin{equation}\label{3.Dgamma3}
  \|D\|_{L^\infty(0,T;L^{\gamma+1}(\Omega))}^{\gamma+1}
	\le C\|D_I\|_{L^\infty(\Omega)}^{\gamma+1} + C\gamma^\beta
	\big(1+\|D\|_{L^\infty(0,T;L^{(\alpha+\gamma)/2}(\Omega))}^{\alpha+\gamma}\big).
\end{equation}

The original Alikakos method is based on halving the exponents (which happens if $\alpha=1$), but since we have $\alpha>1$, the argument is slightly different. We set $\gamma_k:=\gamma+1$ and $\gamma_{k-1}:=(\gamma+\alpha)/2$. This gives the recursion
$\gamma_{k-1} = (\gamma_k-1+\alpha)/2$, which can be solved explicitly:
\begin{equation}\label{3.gammak}
  \gamma_k = 2^k(\gamma_0 + 1 - \alpha) + \alpha-1, \quad k\in\N.
\end{equation}
Setting
$$
  b_k := \|D\|_{L^\infty(0,T;L^{\gamma_k}(\Omega))}^{\gamma_k} 
	+ \|D_I\|_{L^\infty(\Omega)}^{\gamma_k} + 1,
$$
we can write \eqref{3.Dgamma3} as
\begin{align*}
  b_k &\le (C+1)\|D_I\|_{L^\infty(\Omega)}^{\gamma_k}
	+ C(\gamma_k-1)^\beta\big(1+\|D\|_{L^\infty(0,T;L^{\gamma_{k-1}}(\Omega))}^{2\gamma_{k-1}}
	\big) + 1 \\
	&\le C\gamma_k^\beta\big(\|D_I\|_{L^\infty(\Omega)}^{2\gamma_{k-1}}
	+ \|D\|_{L^\infty(0,T;L^{\gamma_{k-1}}(\Omega))}^{2\gamma_{k-1}} + 1\big)
	\le C\gamma_k^\beta b_{k-1}^2 \le C^k\gamma_k^\beta b_{k-1}^2,
\end{align*}
using $\gamma_k<2\gamma_{k-1}$. Since $\gamma_k\le 3^{\beta k}$ for sufficiently large $k$, the recursion inequality becomes
$$
  b_k \le C3^{k\beta}b_{k-1}^2 = M^k b_{k-1}^2, \quad\mbox{where }M:=3^\beta C.
$$
We solve this recursion by introducing $c_k:=M^{k+2}b_k$:
$$
  c_k \le M^{2(k+1)}b_{k-1}^2 = (M^{k+1}b_{k-1})^2 = c_{k-1}^2,
$$
which gives $c_k\le c_0^{2^k}$ and consequently,
\begin{align*}
  b_k = M^{-(k+2)}c_k \le M^{-(k+2)}c_0^{2^k} = M^{-(k+2)}(M^2b_0)^{2^k}
	= M^{2^{k+1}-(k+2)}b_0^{2^k}
\end{align*}
We conclude that
\begin{align*}
  \|D\|_{L^\infty(0,T;L^{\gamma_k}(\Omega))}^{\gamma_k} \le b_k
	\le M^{2^{k+1}-(k+2)}\big(\|D_I\|_{L^\infty(\Omega)}^{\gamma_{0}}
	+ \|D\|_{L^\infty(0,T;L^{\gamma_{0}}(\Omega))}^{\gamma_0} + 1\big)^{2^k}
\end{align*}
and, taking the $\gamma_k$-th root,
\begin{equation}\label{3.Dgammak}
  \|D\|_{L^\infty(0,T;L^{\gamma_k}(\Omega))}
	\le M^{(2^{k+1}-(k+2))/\gamma_k}\big(\|D_I\|_{L^\infty(\Omega)
	}^{\gamma_{0}}
	+ \|D\|_{L^\infty(0,T;L^{\gamma_{0}}(\Omega))}^{\gamma_{0}} 
	+ 1\big)^{2^k/\gamma_k}.
\end{equation}
The exponents on the right-hand side can be bounded independently of $k$ since, by the explicit formula \eqref{3.gammak},
\begin{align*}
  \frac{1}{\gamma_k}(2^{k+1}-(k+2)) 
	&= \frac{2^{k+1}-(k+2)}{2^k(\gamma_0 + 1 - \alpha) + \alpha-1}
	\le \frac{2}{\gamma_0+1-\alpha}, \\
	\frac{2^k}{\gamma_k} &= \frac{2^k}{2^k(\gamma_0 + 1 - \alpha) + \alpha-1}
	\le \frac{1}{\gamma_0+1-\alpha}.
\end{align*}
Hence, we can pass to the limit $k\to\infty$ in \eqref{3.Dgammak}, which yields the desired bound for $D$ in $L^\infty(0,T;L^\infty(\Omega))$.
\end{proof}


\section{Weak--strong uniqueness}\label{sec.wsu}

We prove Theorem \ref{thm.wsu}. According to \cite[Lemma 2.4]{LaTz13}, for $0\le m\le\bar{v}\le M$, there exist constants $R>0$ (depending on $m$ and $M$) and $C_1,C_2>0$ (depending on $R$, $m$, and $M$) such that
$$
  h_v(v|\bar{v}) \ge \begin{cases}
  C_1|v-\bar{v}|^2 & \mbox{if }0<v\le R,\ m\le\bar{v}\le M, \\
  C_2|v-\bar{v}|^{\alpha_v} & \mbox{if }v>R,\ m\le\bar{v}\le M,
  \end{cases}
$$
recalling definition \eqref{1.rel} of the relative entropy density. Choosing $0\le v\le M$ and $m\le\bar{v}\le M$, this implies that 
\begin{equation}\label{4.relentL2}
  h_v(v|\bar{v})\ge C|v-\bar{v}|^2, 
  \quad\mbox{where } C=\max\{C_1,C_2(2M)^{\alpha_v-2}\}.
\end{equation}

We compute the time derivative of the relative free energy, defined in \eqref{1.relent}:
\begin{align}\label{4.dHdt}
  \frac{\dd}{\dt}&H[n,p,D|\bar{n},\bar{p},\bar{D}]
  = \sum_{v=n,p,D}\big(\langle\pa_t v,h_v'(v)-h_v'(\bar{v})\rangle
  - \langle\pa_t\bar{v},h''(\bar{v})(v-\bar{v})\rangle\big) \\
  &\phantom{xx}- \lambda^2\langle\pa_t\Delta(V-\bar{V}),
  V-\bar{V}\rangle \nonumber \\
  &= \sum_{v=n,p,D}\big(\langle\pa_t v,h_v'(v)-h_v'(\bar{v})
  -V+\bar{V}\rangle
  - \langle\pa_t\bar{v},h''_v(\bar{v})(v-\bar{v})+V-\bar{V}
  \rangle\big), \nonumber
\end{align}
recalling definition \eqref{1.internal} of the internal energies $h_v$. 
At this point, we need the property $h'_v(v)\in L^2(0,T;H^1(\Omega))$ for $v=n,p,D$, which holds thanks to Lemma \ref{lem.alpha-1}.
We consider the case $v=n$. Inserting the equations 
$\pa_t n=\diver(n\na(h_n'(n)-V))$ and $\pa_t\bar{n}=\diver(\bar{n}\na
(h_n'(\bar{n})-\bar{V}))$ and integrating by parts gives
\begin{align}\label{4.aux}
  \langle \pa_t &n,h_n'(n)-h_n'(\bar{n})-V+\bar{V}\rangle
  - \langle\pa_t\bar{n},h_n''(\bar{n})(n-\bar{n})+V-\bar{V}\rangle \\
  &= -\int_\Omega n\na(h_n'(n)-V)\cdot
  \na\big((h_n'(n)-V)-(h_n'(\bar{n})-\bar{V})\big)\dx \nonumber \\
  &\phantom{xx}- \int_\Omega \bar{n}\na(h_n'(\bar{n})-\bar{V})\cdot\na\big(
  h_n''(\bar{n})(n-\bar{n})+V-\bar{V}\big)\dx \nonumber \\
  &= -\int_\Omega n\big|\na\big((h_n'(n)-V)-(h_n'(\bar{n})-\bar{V})\big)
  \big|^2\dx \nonumber \\
  &\phantom{xx}- \int_\Omega n\na(h'_n(\bar{n})-\bar{V})\cdot\na
  \big((h_n'(n)-V)-(h_n'(\bar{n})-\bar{V})\big)\dx \nonumber \\
  &\phantom{xx}- \int_\Omega \bar{n}\na(h_n'(\bar{n})-\bar{V})\cdot\na\big(
  h_n''(\bar{n})(n-\bar{n})+V-\bar{V}\big)\dx v \nonumber \\
  &= -\int_\Omega n\big|\na\big((h_n'(n)-V)-(h_n'(\bar{n})-\bar{V})\big)
  \big|^2\dx - \int_\Omega\na(h_n'(\bar{n})-\bar{V}) \nonumber \\
  &\phantom{xx}\times
  \big[n\na(h'_n(n)-h'_n(\bar{n})) + \bar{n}\na(
  h_n''(\bar{n})(n-\bar{n})) - (n-\bar{n})\na(V-\bar{V})\big]\dx. 
  \nonumber 
\end{align}
A computation shows that
\begin{align*}
  n\na(h'_n(n)-h'_n(\bar{n})) + \bar{n}\na(h_n''(\bar{n})(n-\bar{n})) 
  = (\alpha_n-1)\na h_n(n|\bar{n}). 
\end{align*}
This identity uses the fact that $h_n$ is given by a power law; it may not hold for general (convex) functions. Taking into account that the first term on the right-hand side of \eqref{4.aux} is nonpositive, we obtain, after integrating by parts (observe that $h_n(n|\bar{n})=0$ on $\Gamma_{\mathrm{Dir}}$) and using Young's inequality,
\begin{align*}
  \langle \pa_t &n,h_n'(n)-h_n'(\bar{n})-V+\bar{V}\rangle
  - \langle\pa_t\bar{n},h_n''(\bar{n})(n-\bar{n})+V-\bar{V}\rangle \\
  &\le -(\alpha_n-1)\int_\Omega\na(h'_n(\bar{n})-\bar{V})\cdot
  \na h_n(n|\bar{n})\dx \\
  &\phantom{xx}+ \int_\Omega\na(h'_n(\bar{n})-\bar{V})
  \cdot\na(V-\bar{V})(n-\bar{n})\dx \\
  &\le (\alpha_n-1)\|\Delta(h'_n(\bar{n})-\bar{V})\|_{L^\infty(\Omega)}
  \int_\Omega h_n(n|\bar{n})\dx \\
  &\phantom{xx}+ \|\na(h'_n(\bar{n})-\bar{V})\|_{L^\infty(\Omega)}
  \|\na(V-\bar{V})\|_{L^2(\Omega)}\|n-\bar{n}\|_{L^2(\Omega)}.
\end{align*}
By assumption, $h'_n(\bar{n})-\bar{V}$ is bounded in $L^\infty(0,T;W^{2,\infty}(\Omega))$. Therefore,
\begin{align*}
  \langle \pa_t &n,h_n'(n)-h_n'(\bar{n})-V+\bar{V}\rangle
    - \langle\pa_t\bar{n},h_n''(\bar{n})(n-\bar{n})+V-\bar{V}\rangle \\
    &\le C\int_\Omega h_n(n|\bar{n})\dx 
    + C\|\na(V-\bar{V})\|_{L^2(\Omega)}^2
    + C\|n-\bar{n}\|_{L^2(\Omega)}^2.
\end{align*}
We deduce from inequality \eqref{4.relentL2} that the last term is bounded according to 
$$
  \|n-\bar{n}\|_{L^2(\Omega)}^2\le \int_\Omega h_n(n|\bar{n})\dx.
$$
Similar estimates are derived for $p$ and $D$. Summarizing, we conclude from \eqref{4.dHdt} that
\begin{align*}
  \frac{\dd}{\dt}&H[n,p,D|\bar{n},\bar{p},\bar{D}]
  \le C\int_\Omega H[n,p,D|\bar{n},\bar{p},\bar{D}]\dx.
\end{align*}
Gronwall's inequality and the fact that $H(n,p,D|\bar{n},\bar{p},\bar{D}) = 0$ at $t=0$ finish the proof.


\end{document}